\documentclass[a4paper, 10pt, twoside]{amsart}

\usepackage[utf8x]{inputenc}

\usepackage{amsmath}
\usepackage{amssymb}
\usepackage{amsthm}
\usepackage{amscd}
\usepackage[british]{babel}

\usepackage{enumerate}
\usepackage{lefsymp}

%\usepackage{showlabels} %%% zeigt labels an

%%%%%%%%%% 

\title[Lefschetz numbers of symplectic involutions]{Lefschetz numbers of symplectic involutions on arithmetic groups}
\author[S. Kionke]{Steffen Kionke}
\date{\today}

\address{Max-Planck Institute for Mathematics, Vivatsgasse 7, 53111 Bonn, Germany}
\email{skionke@mpim-bonn.mpg.de}
\thanks{The author was supported by FWF Austrian Science Fund, grant P 21090-N13.}
\subjclass[2010]{Primary 11F75; Secondary 20H10, 20G35}
%\keywords{Arithmetic group, cohomology, Lefschetz number, involution}

%%%%%%%%%%%%%%%%%%%%%%%%%%
\begin{document}
\selectlanguage{british}
 %%%%%%%% abstract %%%%%%%%%
  \begin{abstract}
	The reduced norm-one group $G$ of a central simple algebra is an inner form of the special linear group,
	and an involution on the algebra induces an automorphism of $G$.
	We study the action of such automorphisms in the cohomology of arithmetic subgroups of $G$.
	The main result is a precise formula for Lefschetz numbers of automorphisms induced by involutions of symplectic type.
	Our approach is based on a careful study of the smoothness properties of group schemes associated with orders in central simple algebras.
	Along the way we also derive an adelic reformulation of Harder's Gau\ss-Bonnet Theorem.
  \end{abstract}
 %%%%%%%%%%%%%%%%%%%%%%%%%%%
\maketitle

\section{Introduction}\label{sec:intro}
Let $G$ be a semisimple linear algebraic group defined over the field $\bbQ$ of rational numbers.
Given a torsion-free arithmetic subgroup $\Gamma \subset G(\bbQ)$ it is in general a very difficult task to compute the (cohomological) Betti numbers of $\Gamma$.
However Harder's Gau\ss-Bonnet Theorem \cite{Harder1971} makes it possible to determine the Euler characteristic of arithmetic groups.
If the Euler characteristic is non-zero, one can extract information on the Betti numbers. Moreover,
whether or not the Euler characteristic vanishes only depends on the structure of the associated real Lie group $G(\bbR)$ (see Rem.~\ref{rem:NoComplexPlaces}).
If the Euler characteristic vanishes, Lefschetz numbers of automorphisms of finite order of $G$ are a suitable substitute to gain insight into the cohomology of $\Gamma$.
The idea to study Lefschetz numbers in the cohomology of arithmetic groups goes back to Harder \cite{Harder1975}. A general method was developed by J.~Rohlfs, first for Galois automorphisms
 \cite{Rohlfs1978} and later in a general adelic setting~\cite{Rohlfs1990}. Lefschetz numbers were also studied by Lee-Schwermer \cite{LeeSchwermer1983} and Lai \cite{Lai1991}.
However, only very few groups have been considered in detail, most frequently Lefschetz numbers on Bianchi groups were studied (see Krämer \cite{Kraemer1985},
Rohlfs \cite{Rohlfs1985},  Seng\"un-T\"urkelli \cite{SengunTurkelli2012}, and Kionke-Schwermer \cite{KSchwermer2012}).
In this article we describe a method (based on Rohlfs' approach) to compute Lefschetz numbers of specific automorphisms on arithmetic subgroups of inner forms of the special linear group.

More precisely, let $F$ be an algebraic number field and let $A$ be a central simple $F$-algebra.  The reduced norm $\nrd_{A/F}$ is a polynomial function on $A$
and the associated reduced norm-one group $G = \SL_A$ is a linear algebraic group defined over $F$.  Indeed, the algebraic group $G$ is an
inner form of the special linear group.
If $A$ has an involution $\sigma$ of symplectic type (see Def.~\ref{def:symplecticType}), then
the composition of $\sigma$ with the group inversion yields an automorphism $\sigma^*$ of $G$.
We study the Lefschetz numbers of such automorphisms induced by involutions of symplectic type.

\subsection{The main result}
Let $F$ be an algebraic number field and let $\calO$ denote its ring of integers.
Let $A$ be a central simple $F$-algebra. For our purposes we may assume that $A = M_n(D)$ for some quaternion $F$-algebra $D$ (see \ref{sec:redQuat}).

Let $\Lambda_D \subseteq D$ be a maximal $\calO$-order in $D$,
then $\Lambda := M_n(\Lambda_D)$ is a maximal $\calO$-order in~$A$. 
For a non-trivial ideal $\LA \subseteq \calO$ we study the cohomology of the principal congruence subgroups
\begin{equation*}
   \Gamma(\LA) := \{\:g \in M_n(\Lambda_D)\:|\:\nrd_A(g) = 1 \:\text{ and }\: g \equiv 1 \bmod \LA\:\}
\end{equation*}
of $G$.  In fact, for $n\geq 2$ the groups $\Gamma(\LA)$ have vanishing Euler characteristic.

The quaternion algebra $D$ is equipped with a unique involution of symplectic type $\tau_c: D \to D$, called \emph{conjugation},
which induces an involution of symplectic type $\tau: A \to A$ by $\tau(x) := \tau_c(x)^T$, i.e.~apply $\tau_c$ to every entry of the matrix and then transpose the matrix. 
We will call $\tau$ the \emph{standard involution of symplectic type} on $M_n(D)$.
Composition of $\tau$ with the group inversion yields an automorphism $\tau^*$ of order two on $G$.
Note that the congruence groups $\Gamma(\LA)$ are $\tau^*$-stable. Fix a rational representation $\rho: G\times_F \alg{F} \to \GL(W)$ of $G$ (defined over the algebraic closure of $F$)
 on a finite dimensional vector space. If $W$ is equipped with a compatible $\tau^*$-action (see Def.~\ref{def:compatibleAction}), then
we can define the Lefschetz number $\mcal{L}(\tau^*,\Gamma(\LA),W)$ of $\tau^*$ in the cohomology $\com{H}(\Gamma(\LA),W)$.
\begin{maintheorem}
  Assume that $\Gamma(\LA)$ is torsion-free.
   If $D$ is totally definite, we assume further that $n \geq 2$.
   The Lefschetz number $\mcal{L}(\tau^*,\Gamma(\LA),W)$ is 
   zero if $F$ is not totally real. 
   
   If $F$ is totally real, the Lefschetz number
   is given by the following formula
   \begin{equation*}
       \mcal{L}(\tau^*,\Gamma(\LA),W) \: = \:
         2^{-r} \N(\LA)^{n(2n+1)} \Drd{D}^{n(n+1)/2} \Tr(\tau^*|W) \prod_{j=1}^n M(j,\LA, D).
   \end{equation*}
    Here $\Drd{D}$ denotes the signed reduced discriminant of $D$ (see Def.~\ref{def:redDiscr}), $r$ denotes the number of real places of $F$
    ramified in $D$, and 
    \begin{equation*}
      M(j,\LA,D) := \zeta_F(1-2j) \prod_{\LP | \LA}\bigl(1-\frac{1}{\N(\LP)^{2j}}\bigr)
      \prod_{\substack{\LP \in \Ram_f(D) \\ \LP \nmid \LA}} \bigl(1+(\frac{-1}{\N(\LP)})^{j}\bigr)
   \end{equation*}  
      where $\Ram_f(D)$ denotes the set of finite places of $F$ where $D$ ramifies and $\zeta_F$ denotes the Dedekind zeta-function of $F$.
      If $F$ is totally real, then the Lefschetz number is zero if and only if $\Tr(\tau^*|W) = 0$. 
\end{maintheorem}

\subsection{Applications}
We briefly give three applications of the above formula where we always assume $F$ to be a totally real number field.

\subsubsection{Growth of the total Betti number}
The analysis of the asymptotic behaviour of Betti numbers of arithmetic groups is an important topic. Recent results of Calegari-Emerton
provide strong asymptotic upper bounds (cf.~\cite{CalegariEmerton2009}).
We can use the main theorem to obtain an asymptotic lower bound result.

Let $G$ be the reduced norm-one group associated with the central simple $F$-algebra $M_n(D)$. For a torsion-free arithmetic subgroup $\Gamma \subseteq G(F)$
we define the \emph{total Betti number} $B(\Gamma) := \sum_{i = 0}^\infty \dim H^i(\Gamma,\bbC)$. Note that this is a finite sum since torsion-free arithmetic
groups are of type (FL) (see \cite[Thm.~11.4.4]{BorelSerre1973}).
\begin{corollary}\label{cor:GrowthBettiNumber}
 Let $\Gamma_0\subset G(F)$ be an arith\-metic subgroup.
 For any ideal $\LA \subset \calO$ we define
  $\Gamma_0(\LA) := \Gamma_0 \cap \Gamma(\LA)$.
 There is a positive real number $\kappa>0$, depending on $F$, $D$, $\Gamma_0$, and $n$, 
 so that
  \begin{equation*}
     B(\Gamma_0(\LA)) \geq \kappa [\Gamma_0:\Gamma_0(\LA)]^{\frac{n(2n+1)}{4n^2-1}}
  \end{equation*}
  holds for every ideal $\LA$ such that $\Gamma(\LA)$ is torsion-free.
\end{corollary}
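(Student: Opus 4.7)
The plan is to apply the Main Theorem to the trivial representation $W=\bbC$, for which the compatible $\tau^*$-action is the identity and so $\Tr(\tau^*|W)=1$, and to exploit the elementary inequality
\begin{equation*}
|\mcal{L}(\tau^*,\Gamma(\LA),\bbC)| \;\leq\; \sum_i |\Tr(\tau^*|H^i(\Gamma(\LA),\bbC))| \;\leq\; B(\Gamma(\LA)),
\end{equation*}
valid because $\tau^*$ has order two. It therefore suffices to extract a lower bound of the shape $|\mcal{L}(\tau^*,\Gamma(\LA),\bbC)|\geq C_1\N(\LA)^{n(2n+1)}$ from the explicit formula, then transfer it from $\Gamma(\LA)$ to $\Gamma_0(\LA)$ and finally reinterpret $\N(\LA)$ group-theoretically.

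The heart of the argument, which I expect to be the main technical step, is to bound each factor $M(j,\LA,D)$ uniformly away from zero. By Siegel--Klingen, $\zeta_F(1-2j)$ is a nonzero rational number for every $j\geq 1$ since $F$ is totally real. The product $\prod_{\LP\mid\LA}(1-\N(\LP)^{-2j})$ admits the uniform lower bound $1/\zeta_F(2j)>0$, obtained by extending the product to all finite primes of $F$. Finally, the product over $\Ram_f(D)\setminus\{\LP\mid\LA\}$ is a sub-product of a fixed finite product of strictly positive numbers (each factor being at least $1-\N(\LP)^{-j}>0$), hence is uniformly bounded below by a positive constant depending only on $D$. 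Combining with the constant prefactor $2^{-r}|\Drd{D}|^{n(n+1)/2}$ yields $|\mcal{L}(\tau^*,\Gamma(\LA),\bbC)|\geq C_1\N(\LA)^{n(2n+1)}$ for some $C_1=C_1(F,D,n)>0$.

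To pass to $\Gamma_0(\LA)$, observe that $\Gamma_0$ and $\Gamma(1)$ are both arithmetic and hence commensurable, so $\Gamma_0(\LA)=\Gamma_0\cap\Gamma(\LA)$ has finite index in $\Gamma(\LA)$. The standard identity $\mathrm{cor}\circ\mathrm{res}=[\Gamma(\LA):\Gamma_0(\LA)]$ (a nonzero integer) forces the restriction map on $\bbC$-cohomology to be injective, so $B(\Gamma_0(\LA))\geq B(\Gamma(\LA))\geq C_1\N(\LA)^{n(2n+1)}$.

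The final step translates $\N(\LA)$ into the index $[\Gamma_0:\Gamma_0(\LA)]$. Writing $\Gamma_1:=\Gamma_0\cap\Gamma(1)$ of finite index in $\Gamma_0$, one has an injection $\Gamma_1/\Gamma_0(\LA)\hookrightarrow\Gamma(1)/\Gamma(\LA)$, whence $[\Gamma_0:\Gamma_0(\LA)]\leq[\Gamma_0:\Gamma_1]\cdot[\Gamma(1):\Gamma(\LA)]$. The last factor embeds into $|\SL_A(\Lambda/\LA\Lambda)|$, which at primes unramified in $D$ reduces to $|\SL_{2n}(\calO/\LP^k)|\leq\N(\LP^k)^{4n^2-1}$, while the finitely many ramified primes contribute only a bounded constant; hence $[\Gamma_0:\Gamma_0(\LA)]\leq C_3\N(\LA)^{4n^2-1}$ with $4n^2-1=\dim G$. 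Raising to the exponent $n(2n+1)/(4n^2-1)$ produces the claimed bound with $\kappa=C_1\bigl(C_3[\Gamma_0:\Gamma_1]\bigr)^{-n(2n+1)/(4n^2-1)}$.
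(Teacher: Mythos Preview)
Your argument is correct and follows essentially the same route as the paper: bound the Lefschetz number below by a constant times $\N(\LA)^{n(2n+1)}$ using the explicit formula, pass to $\Gamma_0(\LA)$ via injectivity of restriction in cohomology, and convert $\N(\LA)$ into the index via the upper bound $[G(\calO):\Gamma(\LA)]\leq C\,\N(\LA)^{4n^2-1}$ (the paper records this as a separate lemma). One phrasing slip: it is not true that ``the finitely many ramified primes contribute only a bounded constant'' to the index---if $\LP\in\Ram_f(D)$ and $\LP^k\mid\LA$, the local contribution still grows like $\N(\LP^k)^{4n^2-1}$; what is bounded independently of $\LA$ is the \emph{ratio} of this contribution to $\N(\LP^k)^{4n^2-1}$, which is all you need for the stated inequality $[\Gamma_0:\Gamma_0(\LA)]\leq C_3\N(\LA)^{4n^2-1}$.
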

A proof of this corollary will be given in Section \ref{sec:growthOfBettinumber}.

\subsubsection{Rationality of zeta values} Note that the Lefschetz number is an integer since $\tau^*$ is of order two.
We obtain a new proof of a classical theorem of Siegel \cite{Siegel1969} and Klingen \cite{Klingen1962}.
\begin{corollary}
 If $F$ is a totally real number field, then $\zeta_F(1-2m)$ is a non-zero rational number for all integers $m \geq 1$.
\end{corollary}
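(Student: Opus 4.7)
The plan is to specialise the Main Theorem to the simplest possible setting, so that $\zeta_F(1-2m)$ appears essentially in isolation. Take $D$ to be the split quaternion algebra $M_2(F)$; then $A = M_{2n}(F)$ is split everywhere, so $r = 0$, $\Drd{D} = 1$, and $\Ram_f(D) = \emptyset$, and since $D$ is not totally definite no auxiliary lower bound on $n$ is imposed. Take $W = \bbC$ with the trivial $\tau^*$-action, so that $\Tr(\tau^*|W) = 1$. Choose the ideal $\LA$ small enough that $\Gamma(\LA)$ is torsion-free — any sufficiently deep congruence level works by the standard Minkowski-type argument.

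With these choices the Main Theorem formula collapses to
\begin{equation*}
   \mcal{L}(\tau^*,\Gamma(\LA),\bbC) \;=\; \N(\LA)^{n(2n+1)} \prod_{j=1}^n \zeta_F(1-2j) \prod_{\LP \mid \LA}\bigl(1 - \N(\LP)^{-2j}\bigr).
\end{equation*}
The left-hand side is an integer: since $\tau^*$ has order two, its action on each $H^i(\Gamma(\LA),\bbC)$ has eigenvalues $\pm 1$, so each individual trace, and hence the alternating sum, lies in $\bbZ$. Consequently the right-hand side is rational.

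Rationality of $\zeta_F(1-2m)$ now follows by induction on $m$. The base case $n = m = 1$ expresses $\zeta_F(-1)$ as a rational multiple of $\mcal{L}(\tau^*,\Gamma(\LA),\bbC)$. For the inductive step, set $n = m$: the prefactor $\N(\LA)^{n(2n+1)}$ and the finite products $\prod_{\LP \mid \LA}(1 - \N(\LP)^{-2j})$ are manifestly rational, the factors $\zeta_F(1-2j)$ for $j < m$ are rational by induction, and the Lefschetz number is an integer, so $\zeta_F(1-2m) \in \bbQ$.

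Non-vanishing is immediate from the final clause of the Main Theorem: since $F$ is totally real and $\Tr(\tau^*|\bbC) = 1 \neq 0$, the Lefschetz number is nonzero, which forces $\prod_{j=1}^n \zeta_F(1-2j) \neq 0$ for every admissible $n$; hence $\zeta_F(1-2m) \neq 0$. I do not anticipate a serious obstacle here — the Main Theorem does all the heavy lifting. The only point requiring attention is that the auxiliary factors $1 - \N(\LP)^{-2j}$ attached to $\LA$ are strictly positive rationals and so cannot interfere with either rationality or non-vanishing, and that we may select $\LA$ simultaneously ensuring torsion-freeness of $\Gamma(\LA)$; both are routine.
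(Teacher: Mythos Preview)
Your proof is correct and follows the paper's own argument essentially verbatim: specialise the Main Theorem to the split quaternion algebra $D = M_2(F)$ with the trivial representation, read off that the displayed product is a nonzero integer, and induct on $m$. The only cosmetic point is that the non-vanishing you establish in the final paragraph is actually needed inside the induction step (to divide by $\prod_{j<m}\zeta_F(1-2j)$), so the cleanest phrasing is to take ``$\zeta_F(1-2j)$ is a nonzero rational'' as the induction hypothesis.
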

\begin{proof}
 Apply the main theorem with $D = M_2(F)$, $\Lambda_D = M_2(\calO)$ and choose $W$ to be the trivial one-dimensional representation.
  We see that for every $n \geq 1$ and all sufficiently small ideals $\LA \subseteq \calO$, the number
  \begin{equation*}
       \N(\LA)^{n(2n+1)} \prod_{j=1}^n\Bigl( \zeta_F(1-2j) \prod_{\LP | \LA}\bigl(1-\frac{1}{\N(\LP)^{2j}}\bigr) \Bigr)
  \end{equation*}
  is a non-zero integer. The claim follows by induction on $m$.
\end{proof}

\subsubsection{Cohomology of cocompact fuchsian groups}
 Let $D$ be a division quaternion algebra over $F$
such that $D$ is split at precisely one real place $v_0$ of $F$.
So $r = [F:\bbQ]-1$ is the number of real places ramified in $D$.

Let $\Lambda = \Lambda_D$ be a maximal $\calO$-order in $D$. We consider the reduced norm-one group $G = \SL_D$ defined over $F$.
The associated real Lie group is
\begin{equation*}
   G_\infty \cong \SL_2(\bbR) \times \SL_1(\bbH)^r.
\end{equation*}
Note that the group $\SL_1(\bbH)$ is compact and so the projection $p_1 : G_\infty \to \SL_2(\bbR)$ onto the first factor 
is a proper and open homomorphism of Lie groups. In particular, every discrete torsion-free subgroup $\Gamma \subseteq G_\infty$
maps via $p_1$ isomorphically to a discrete subgroup in $\SL_2(\bbR)$. 

Let $\LA \subseteq \calO$ be a proper ideal such that $\Gamma(\LA)$ is 
torsion-free.
We will interpret $\Gamma(\LA)$ as a subgroup of $\SL_2(\bbR)$. Note that, since we assumed $D$ to be a division algebra, the 
group $\Gamma(\LA)$ is a cocompact Fuchsian group (see Thm.~5.4.1 in \cite{Katok1992}). 

Let $\LH = \SL_2(\bbR)/\SO(2)$ be the Poincar\'e upper half-plane.
\begin{corollary}\label{cor:genusFuchsian}
  The compact Riemann surface 
  $\LH/\Gamma(\LA)$ has genus
  \begin{equation*}
       g = 1+ 2^{-[F:\bbQ]}\N(\LA)^3 \left|\Drd{D} \zeta_F(-1)\right| \prod_{\LP | \LA} \bigl(1-\N(\LP)^{-2}\bigr) 
       \prod_{\substack{\LP \in \Ram_f(D) \\ \LP \nmid \LA}} \bigl(1-\N(\LP)^{-1}\bigr)
  \end{equation*}
   This implies an explicit formula for the first Betti number $b_1(\Gamma(\LA))$ since
   \begin{equation*}
       b_1(\Gamma(\LA))= \dim H^1(\Gamma(\LA),\bbC) = 2g. 
   \end{equation*}
\end{corollary}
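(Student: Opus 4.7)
The plan is to apply the main theorem with $n = 1$, trivial one-dimensional coefficient module $W = \bbC$ (on which $\tau^*$ acts as the identity), and to translate the resulting Lefschetz number into the genus of $X := \LH/\Gamma(\LA)$.

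The key observation is that for $n = 1$ the automorphism $\tau^*$ is trivial on $F$-points of $G = \SL_D$: for every $g \in G(F)$ the identity $\tau_c(g)g = \nrd(g) = 1$ forces $\tau_c(g) = g^{-1}$, and hence $\tau^*(g) = \tau_c(g)^{-1} = g$. Therefore $\tau^*$ acts as the identity on $\com{H}(\Gamma(\LA),\bbC)$ and its Lefschetz number collapses to the Euler characteristic, $\mcal{L}(\tau^*,\Gamma(\LA),\bbC) = \chi(\Gamma(\LA))$. Since $D$ is a division algebra, the torsion-free group $\Gamma(\LA)$ is cocompact in $\SL_2(\bbR)$, so $X = \LH/\Gamma(\LA)$ is a closed orientable Riemann surface of some genus $g$ with $b_0(X) = b_2(X) = 1$ and $b_1(X) = 2g$; thus $\chi(\Gamma(\LA)) = 2 - 2g$, and the identity $b_1(\Gamma(\LA)) = 2g$ stated at the end of the corollary is immediate.

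Specialising the main theorem to $n = 1$, $r = [F:\bbQ]-1$ and $\Tr(\tau^*|\bbC) = 1$ now yields
$$2 - 2g \;=\; 2^{-r}\N(\LA)^{3}\,\Drd{D}\, M(1,\LA,D),$$
with $M(1,\LA,D) = \zeta_F(-1)\prod_{\LP | \LA}(1-\N(\LP)^{-2})\prod_{\LP \in \Ram_f(D),\,\LP \nmid \LA}(1-\N(\LP)^{-1})$. Solving for $g$ and using $2^{-r-1} = 2^{-[F:\bbQ]}$, the formula of the corollary is equivalent to the assertion that $\Drd{D}\,\zeta_F(-1)$ is negative, and this is the only remaining point.

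I expect the sign reconciliation to be the main obstacle. From the functional equation of $\zeta_F$, with $[F:\bbQ]$ archimedean Gamma factors of real type, one obtains that the sign of $\zeta_F(-1)$ is $(-1)^{[F:\bbQ]}$. The definition of the signed reduced discriminant (Def.~\ref{def:redDiscr}) should, by its normalisation, assign to $\Drd{D}$ the sign $(-1)^r$ recording the $r$ ramified real places of $F$, so that $\Drd{D}\,\zeta_F(-1)$ has sign $(-1)^{r+[F:\bbQ]} = (-1)^{2[F:\bbQ]-1} = -1$. Granted this, the minus sign appearing when solving for $g$ is absorbed into the absolute value, producing exactly the stated formula and concluding the proof.
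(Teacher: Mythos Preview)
Your proof is correct and follows essentially the same approach as the paper: apply the main theorem with $n=1$ and trivial coefficients, observe that $\tau^*$ is the identity so the Lefschetz number equals $\chi(\Gamma(\LA)) = 2-2g$, and match signs. You supply more detail than the paper does on why $\tau^*$ is trivial and on the sign reconciliation (the paper simply asserts that the sign of the Lefschetz number is $-1$, which also follows from the sign clause $(-1)^{sn(n+1)/2}$ in Theorem~\ref{thm:EulerCharFPGroups} with $s=1$, $n=1$), but the argument is the same.
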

\begin{proof}
   Consider the main theorem for $n=1$. Note that for $n=1$ the automorphism $\tau^*$ is actually the identity.
   This means that, using the main theorem with the trivial representation, we obtain
   \begin{equation*}
       \mcal{L}(\tau^*, \Gamma(\LA), \bbC) = \chi(\Gamma(\LA)) = \chi(\LH / \Gamma(\LA)).
   \end{equation*}
   Note that the sign of the Lefschetz number is $-1$. 
   Since $\chi(\LH/\Gamma(\LA)) = 2 - 2g$, the claim follows immediately.
\end{proof}
In fact Corollary \ref{cor:genusFuchsian} yields a precise formula for the dimension of the space of holomorphic weight $k$
modular forms for the group $\Gamma(\LA)$ (use Shimura's Theorems 2.24 and 2.25 in \cite{Shimura1971}). 

\subsection{Reduction to quaternion algebras}\label{sec:redQuat}
Let $A$ be a central simple $F$-algebra.
If $A$ has an involution $\sigma$ of symplectic type (see Def.~\ref{def:symplecticType}),
then $A$ is isomorphic to the opposed $F$-algebra $A^{op}$. This means that
the class of $A$ has order two in the Brauer group of $F$. 
Since the dimension of $A$ is even, it follows from (32.19) in~\cite{Reiner2003} that $A$ is isomorphic to a matrix algebra $M_n(D)$ over
a quaternion algebra~$D$. Therefore we always assume $A = M_n(D)$.

Let $\tau$ be the standard involution of symplectic type on $M_n(D)$. 
Note further that 
in this case ${\sigma = \inn(g) \circ \tau}$ for an element 
$g \in \GL_n(D)$ with $\tau(g) = g$.
Due to this observation it is only a minor restriction if
we focus on the standard symplectic involution $\tau$.

\subsection{Structure of this article}
In Section \ref{sec:smoothGroupSchemes} give a short general treatment of smooth group schemes over Dedekind rings which are associated with 
orders in central simple algebras. In particular we treat integral models of inner forms of the special linear group.
Further, we consider the fixed points groups attached to involutions. An important tool in the proof of the main theorem will be
the pfaffian as a map in non-abelian Galois cohomology (cf.~Section~\ref{sec:symplecticAndPfaffian}).
In Section~\ref{sec:HardersGB} we give an adelic reformulation of Harder's Gau\ss-Bonnet Theorem which hinges on the notion of smooth group scheme.
The calculation of the Lefschetz number is based on Rohlfs' method which we summarise in Section~\ref{sec:RohlfsMethod}.
Finally the proof of the main theorem is contained in Section~\ref{sec:ProofMain}. It consists of two major steps. The first is
the analysis of various non-abelian Galois cohomology sets which occur in Rohlfs' decomposition. The second step is the calculation of the Euler characteristics of the fixed point groups using 
Harder's Gau\ss-Bonnet Theorem.

\subsection*{Notation}
Apart from Section \ref{sec:smoothGroupSchemes}, where we work in a more general setting, we use the following notation:
$F$ is an algebraic number field and $\calO$ denotes its ring of integers.
Let $V$ denote the set of places of $F$. We have $V = V_\infty \cup V_f$ where
$V_\infty$ (resp. $V_f$) denotes the set of archimedean (resp. finite) places of $F$.
Let $v \in V$ be a place of~$F$, then we denote the completion of $F$ at $v$ by $F_v$.
The valuation ring of $F_v$ is denoted by $\calO_v$ and the prime ideal in $\calO_v$ is denoted by $\LP_v$. 
For a non-zero ideal $\LA \subseteq \calO$ the ideal norm is defined by $\N(\LA) := |\calO/\LA|$.
As usual $\bbA$ denotes the ring of adeles of $F$ and $\bbA_f$ is the ring of finite adeles.

\section{Group schemes associated with orders in central simple algebras}\label{sec:smoothGroupSchemes}

In this section we will investigate the smoothness properties of group schemes attached to orders in central simple algebras.
Throughout $R$ denotes a Dedekind ring and $k$ denotes its field of fractions. For simplicity we assume $\chr(k) = 0$.
In our applications $R$ is usually the ring of integers of an algebraic number field or a complete discrete valuation ring. 

The term \emph{scheme} always refers to an affine scheme of finite type, the same holds for group schemes.
Recall that a scheme $\schX$ defined over $R$ is \emph{smooth} if
for every commutative $R$-algebra $C$ and every nilpotent ideal $I \subseteq C$ the 
induced map $\schX(C) \to \schX(C/I)$ is surjective.
Suppose $R$ is a complete discrete valuation ring and let $\LP$ denote its prime ideal.
We will frequently use the following property: If $\schX$ is a smooth $R$-scheme, then the 
induced map $\schX(R) \to \schX(R/\LP^e)$ is surjective for every integer $e \geq 1$ (cf.~Cor.~19.3.11, EGA IV, \cite{EGA4-1}).
If $G$ is a group scheme, then we denote the Lie algebra of $G$ by $\Lie(G)$.

\subsection{The general linear group over an order}\label{sec:unitgroups}
Let $A$ be a central simple $k$-algebra and let $\Lambda$ be an $R$-order in $A$.
Since $\Lambda$ is a finitely generated torsion-free $R$-module, it is a finitely generated \emph{projective} $R$-module (cf.~(4.13) in \cite{Reiner2003}).
The functor $\Lambda_a$ from the category of commutative $R$-algebras to the category of rings
defined by $C \mapsto \Lambda \otimes_R C$ is represented by the symmetric algebra $S_R(\Lambda^*)$ where
$\Lambda^* = \Hom_R(\Lambda, R)$. In fact it defines a smooth $R$-scheme (cf.~19.3.2 in EGA~IV~\cite{EGA4-1}).

Recall that, since $\Lambda$ is finitely generated and projective, one can attach to every $R$-linear endomorphism $\varphi$ of $\Lambda$ its determinant $\det(\varphi) \in R$.
More precisely, here the determinant of $\varphi$ is just the determinant of the $k$-linear extension $\varphi\otimes\id_k: A \to A$.
As usual one defines the \emph{norm} of an element $x \in \Lambda$ to be the determinant of the left-multiplication with $x$.
One can check that the norm defines a morphism of schemes over $R$
 \begin{equation*}
     \N_{\Lambda/R}: \Lambda_a \to \bbA^1/R
 \end{equation*}
to the affine line $\bbA^1$ defined over $R$. This can be seen, for instance, by observing that the norm is a natural transformation of functors.
Let $C$ be a commutative $R$-algebra. An element $x \in \Lambda \otimes_R C$ is a unit if and only if $\N_{\Lambda/R}(x) \in C^\times$.
It follows from the next lemma that the associated unit group functor $\GL_\Lambda: C \mapsto (\Lambda \otimes_R C)^\times$
is a smooth group scheme over $R$.
\begin{lemma}\label{lem:schemeUnits}
  Let $\bbA^1$ denote the affine line over $R$.
  Let $\schX$ be an affine scheme over $R$ with a morphism $f: \schX \to \bbA^1$. The subfunctor $\schY$
  (from the category of commutative $R$-algebras to the category of sets)
  defined by 
       \begin{equation*}
       C \mapsto \{\:y \in \schX(C) \:|\: f(y) \in C^\times\}
       \end{equation*}
  is an affine scheme and
  the natural transformation $\schY \to \schX$ is a morphism of schemes.
  If $\schX$ is smooth, then $\schY$ has the same property. 
\end{lemma}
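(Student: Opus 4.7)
The plan is to realise $\schY$ explicitly by localising $\schX$ at the function $f$. Writing $\schX = \operatorname{Spec}(B)$ and letting $b \in B$ be the element corresponding to $f: \schX \to \bbA^1 = \operatorname{Spec}(R[t])$, I would take $\schY := \operatorname{Spec}(B[s]/(bs - 1))$; equivalently, this is the fibered product $\schX \times_{\bbA^1} \operatorname{Spec}(R[t,t^{-1}])$. The natural transformation $\schY \to \schX$ is then induced by the inclusion of $R$-algebras $B \hookrightarrow B[s]/(bs-1)$, which is clearly a morphism of affine schemes of finite type over $R$.

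To verify representability, an $R$-algebra homomorphism $\varphi: B \to C$ corresponds to a point $y \in \schX(C)$ with $f(y) = \varphi(b)$. Such a $\varphi$ extends to a homomorphism $B[s]/(bs-1) \to C$ if and only if $\varphi(b)$ is invertible in $C$, in which case the extension is uniquely determined by sending $s \mapsto \varphi(b)^{-1}$. This gives the desired functorial bijection $\schY(C) \cong \{y \in \schX(C) \mid f(y) \in C^\times\}$.

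For the smoothness claim I would apply the infinitesimal lifting criterion directly. Let $C$ be a commutative $R$-algebra and $I \subseteq C$ a nilpotent ideal, and let $\bar{y} \in \schY(C/I)$ with image $\bar{x} \in \schX(C/I)$. Smoothness of $\schX$ yields a lift $x \in \schX(C)$ of $\bar{x}$, and it remains to show $f(x) \in C^\times$. Since $f(x) \bmod I = f(\bar{x})$ is a unit in $C/I$, there exists $d \in C$ with $f(x)d = 1 - n$ for some $n \in I$. As $I$ is nilpotent, $n^k = 0$ for some $k$, so $1-n$ is invertible via the finite geometric series $\sum_{i=0}^{k-1} n^i$, and hence $f(x) \in C^\times$. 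The only substantive point in the whole argument is this elementary observation that units lift along nilpotent extensions; everything else is a routine unpacking of the functor of points, and I expect no real obstacle beyond making the identifications precise.
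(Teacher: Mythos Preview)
Your proposal is correct and follows essentially the same approach as the paper: both represent $\schY$ by the ring $B[s]/(bs-1)$ (the paper writes this as $R[\schX]\otimes_R R[T]/(P\otimes T - 1)$, which is the same thing), and both deduce smoothness by lifting along $\schX$ and then observing that an element whose reduction modulo a nilpotent ideal is a unit must itself be a unit. Your extra remark identifying $\schY$ as the fibred product $\schX \times_{\bbA^1} \Gm$ is a nice way to package the construction, but the substance of the two arguments is identical.
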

\begin{proof}
  Let $R[\schX]$ be the coordinate ring of $\schX$ and let $P \in R[\schX]$ be the polynomial defining $f$.
  Note that $\schY$ is canonically isomorphic to the functor 
  \begin{equation*}
    C \mapsto \{\:(y,z) \in \schX(C)\times C\:|\: f(y)z = 1\:\}.
  \end{equation*}
  Using this it is easily checked that the $R$-algebra $S := R[\schX]\otimes_R R[T]/(P\otimes T - 1)$ represents $\schY$.
  Clearly, $S$ is of finite type since $R[\schX]$ is of finite type.

  It remains to show that $\schY$ is smooth, whenever $\schX$ is smooth.
  Assume $\schX$ to be smooth and take a commutative $R$-algebra $C$ with an ideal $J$ such that $J^2=0$.
  By assumption $\schX(C) \to \schX(C/J)$ is surjective, so given $y \in \schY(C/J)$ we find $x \in \schX(C)$ projecting
  onto $y$. By assumption $f(x) +J$ is a unit in $C/J$. In particular, we find $z \in C$ with $f(x)z  \in 1+ J$. However, $1+J$ consists
  entirely of units and thus $f(x) \in C^\times$. We deduce that $\schY$ is smooth.
\end{proof}

To stress this once more: in this article $\GL_\Lambda$ is always a functor and not a group.
If we take $\Lambda = R$ then we call $\GL_\Lambda$ the \emph{multiplicative group} (or multiplicative group scheme) defined over $R$, and we denote it by $\Gm$.
Note that the norm defines a homomorphism of $R$-group schemes
 \begin{equation*}
     \N_{\Lambda/R}: \GL_\Lambda \to \Gm.
 \end{equation*}
We also point out that the Lie algebra of $\GL_\Lambda$ can be (and will be) identified with $\Lambda_a$ in a natural way.

\subsection{The special linear group over an order}

\subsubsection{Reduced norm and trace} \label{par:redNormTrace}
Let $A$ be a central simple $k$-algebra, we consider the reduced norm and trace (for definitions see section 9 in \cite{Reiner2003} or IX, §2 in \cite{WeilBNT}).
It was observed by Weil that the reduced norm and trace are polynomial functions. We reformulate this in schematic language:
There is a unique element $\nrd_{A/k}$ in the symmetric algebra $S_k(A^*)$ (here $A^* = \Hom_k(A,k)$) such that 
for every splitting field $\ell$ of $A$ and every splitting $\varphi: A\otimes_k \ell \isomorph M_n(\ell)$
the induced map $S(\varphi^*): S_\ell(M_n(\ell)^*) \to S_k(A^*)\otimes_k \ell$ maps the determinant to $\nrd_{A/k}\otimes 1$. 
Similarly there is the reduced trace $\trd_{A/k} \in A^*$ with an analogous property.

Let $\Lambda \subseteq A$ be an $R$-order. We show that the reduced norm and trace are defined over $R$ in the appropriate sense.
For the reduced trace this is easy: Elements in $\Lambda$ are integral over $R$, hence the reduced trace takes values in $R$ on the order $\Lambda$ and 
defines an $R$-linear map $\Lambda \to R$.
In particular we obtain a morphism of schemes over $R$
\begin{equation*}
   \trd_{\Lambda/R}: \Lambda_a \to \bbA^1 / R.
\end{equation*}
Consider the reduced norm. From (9.7) in \cite{Reiner2003} one can deduce that $\nrd_{A/k}^n$ and $\N_{\Lambda/R}$ agree as elements in the coordinate ring $S_k(A^*)$.
However, the coordinate ring $S_R(\Lambda^*)$ of $\Lambda_a$ is integrally closed in $S_k(A^*)$ and we conclude that the reduced norm is defined over $R$.
This means that there is a polynomial $\nrd_{\Lambda/R} \in S_R(\Lambda^*)$ which defines the reduced norm as
a morphism of $R$-schemes
\begin{equation*}
     \nrd_{\Lambda/R}: \Lambda_a \to \bbA^1.
\end{equation*}
We can also restrict the reduced norm to the unit group and obtain a homomorphism of group schemes
\begin{equation*}
    \nrd_{\Lambda/R}: \GL_\Lambda \to \Gm / R. 
\end{equation*}
\begin{definition}\label{def:SL}
 The \emph{special linear group} $\SL_\Lambda$ over the order $\Lambda$ is the group scheme over $R$ defined by the kernel of the reduced norm,
this is
\begin{equation*}
   \SL_\Lambda = \ker(\nrd_{\Lambda/R}: \GL_\Lambda \to \Gm).
\end{equation*}
\end{definition}

\subsubsection{Smoothness of the special linear group}
Whereas the general linear group is always smooth, independent of the chosen order, the smoothness of the special linear group depends on the underlying order.
Recall the following useful result.
\begin{proposition}[Smoothness of kernels] \label{prop:SmoothKernels}
 Let $f: G \to H$ be a morphism between two smooth group schemes over $R$.
 If the derivative $\diff(f): \Lie(G)(R) \to \Lie(H)(R)$ is surjective, then the group scheme $K := \ker(f)$ is 
 smooth over $R$. 
\end{proposition}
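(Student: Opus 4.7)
The plan is to verify the infinitesimal lifting criterion for $K$ directly. By the standard reduction via induction on the nilpotency index of $I$, it suffices to show: for every commutative $R$-algebra $C$ and every ideal $I \subseteq C$ with $I^2 = 0$, the restriction map $K(C) \to K(C/I)$ is surjective.

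First I would pick $\bar{x} \in K(C/I)$ and use the smoothness of $G$ to lift it, viewed as a point of $G(C/I)$, to an element $x \in G(C)$. The obstruction to $x$ lying in $K(C)$ is the element $f(x) \in H(C)$, which a priori lies only in the reduction-kernel $N_H := \ker(H(C) \to H(C/I))$. The key technical input is the standard description of such reduction-kernels: for an affine group scheme $X$ over $R$ and a square-zero ideal $I \subseteq C$, there is a natural group isomorphism $\ker(X(C) \to X(C/I)) \cong \Lie(X)(C/I) \otimes_{C/I} I$, which for smooth $X$ simplifies to $\Lie(X)(R) \otimes_R I$. Since this identification is functorial in $X$, the restriction of $f$ to reduction-kernels is precisely the map $\diff(f) \otimes \id_I$ from $\Lie(G)(R) \otimes_R I$ to $\Lie(H)(R) \otimes_R I$.

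Next I would exploit the hypothesis. Since $\diff(f)$ is surjective and the functor $- \otimes_R I$ is right exact, $\diff(f) \otimes \id_I$ is surjective as well. Hence there exists $y \in N_G \subseteq G(C)$ with $f(y) = f(x)$. Setting $x' := x y^{-1}$ one has $f(x') = 1$ in $H(C)$, so $x' \in K(C)$, while $x'$ still reduces to $\bar{x}$ modulo $I$ because $y$ reduces to the identity. This produces the required lift.

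The main delicate point is really the functorial identification of the reduction-kernels with $\Lie(-)(R) \otimes_R I$, together with the compatibility of this identification with morphisms of smooth group schemes; once this structural fact is in place, the argument is entirely formal. Both properties are classical for affine group schemes and can be extracted, for example, from Demazure--Gabriel's treatment or from SGA 3.
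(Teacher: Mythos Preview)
Your proof is correct and follows the same approach the paper indicates: the ``theorem of infinitesimal points'' cited from Demazure--Gabriel is precisely the functorial identification $\ker(X(C)\to X(C/I))\cong \Lie(X)(R)\otimes_R I$ for smooth $X$, and the ``easy diagram chasing'' the paper alludes to is exactly the lift-and-correct argument you have spelled out.
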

\begin{proof}
    This follows from the theorem of infinitesimal points (see \cite[p.~208]{DemazureGabriel}) and some easy diagram chasing.
\end{proof}

As a matter of fact the derivative of the reduced norm $\diff(\nrd_{\Lambda/R}): \Lambda_a \to \bbA^1$ is the reduced trace.
Having this in mind we make the following definition.
\begin{definition}\label{def:smooth}
  An $R$-order $\Lambda$ in a central simple $k$-algebra is called \emph{smooth} if the reduced trace
  $\trd_{\Lambda/R}\!: \Lambda \to R$ is surjective.
\end{definition}
 Note that smoothness of orders is a local property.
\begin{corollary}\label{cor:smoothSL}
  If the order $\Lambda$ is smooth then the scheme $\SL_\Lambda$ is smooth.
\end{corollary}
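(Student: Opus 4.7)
The plan is essentially a direct application of Proposition \ref{prop:SmoothKernels} to the reduced norm homomorphism $\nrd_{\Lambda/R} \colon \GL_\Lambda \to \Gm$. First I would verify the hypotheses on the source and target: the group scheme $\GL_\Lambda$ is smooth over $R$ by the discussion immediately following Lemma \ref{lem:schemeUnits} (applied to the smooth $R$-scheme $\Lambda_a$ together with the norm morphism $\N_{\Lambda/R}$), and $\Gm$ is smooth by construction.

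Second I would transcribe the hypothesis of Proposition \ref{prop:SmoothKernels} on the derivative. Under the identification $\Lie(\GL_\Lambda) = \Lambda_a$ (recorded at the end of Section \ref{sec:unitgroups}) and the standard identification $\Lie(\Gm) = \bbA^1$, the derivative $\diff(\nrd_{\Lambda/R})$ at the identity becomes an $R$-morphism $\Lambda_a \to \bbA^1$, which by the remark just preceding Definition \ref{def:smooth} coincides with the reduced trace $\trd_{\Lambda/R}$. On $R$-points this is the $R$-linear map $\trd_{\Lambda/R} \colon \Lambda \to R$, whose surjectivity is precisely the assumption that $\Lambda$ is a smooth order.

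With both smoothness of the ambient group schemes and surjectivity of the derivative established, Proposition \ref{prop:SmoothKernels} applies and yields that $\SL_\Lambda = \ker(\nrd_{\Lambda/R})$ is smooth over $R$, as required. There is no real obstacle here: all of the work has already been packaged into Proposition \ref{prop:SmoothKernels}, the identification of the derivative of the reduced norm with the reduced trace, and the definition of a smooth order; the only mild care needed is to match the Lie-algebra-level surjectivity hypothesis of Proposition \ref{prop:SmoothKernels} with the order-theoretic condition in Definition \ref{def:smooth}, but after the preceding identifications this match is tautological.
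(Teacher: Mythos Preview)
Your proposal is correct and follows exactly the paper's own argument: apply Proposition~\ref{prop:SmoothKernels} to $\nrd_{\Lambda/R}\colon \GL_\Lambda \to \Gm$, using that $\GL_\Lambda$ and $\Gm$ are smooth and that the derivative of the reduced norm is the reduced trace, whose surjectivity is precisely Definition~\ref{def:smooth}. There is nothing to add.
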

\begin{proof}
  This follows immediately from Proposition \ref{prop:SmoothKernels} using the fact that the derivative of the reduced norm is the reduced trace.
\end{proof}
 In fact, also the converse statement holds under the assumption $\chr(R)=0$. However, we shall not need this result.
 The next proposition shows that smooth orders exist.
 \begin{proposition}\label{prop:maxImpliesSmooth}
   Assume that $R/\LP$ is finite for every prime ideal $\LP$. Then every maximal $R$-order in a central simple $k$-algebra is smooth.
 \end{proposition}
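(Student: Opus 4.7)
The plan is to reduce to a local computation and then exploit the existence of an unramified maximal subfield inside a local division algebra. Surjectivity of $\trd_{\Lambda/R}: \Lambda \to R$ can be checked locally at each prime $\LP \subseteq R$, and both the property of being a maximal order and the reduced trace map are compatible with localization and $\LP$-adic completion. Hence we may assume that $R$ is a complete discrete valuation ring with finite residue field.

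In this setting the classical structure theory of central simple algebras over a complete local field (see §17 of \cite{Reiner2003}) gives $A \cong M_m(D)$ for a uniquely determined division algebra $D$ with centre $k$, and every maximal $R$-order in $A$ is conjugate to $M_m(\Lambda_D)$, where $\Lambda_D$ denotes the unique maximal $R$-order in $D$. Under this identification $\trd_{A/k}$ factors as the matrix trace $M_m(D) \to D$ followed by $\trd_{D/k}$. The matrix trace already sends $M_m(\Lambda_D)$ onto $\Lambda_D$ (take a matrix with a single entry on the diagonal), so it suffices to prove
\begin{equation*}
\trd_{D/k}(\Lambda_D) \: = \: R.
\end{equation*}

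To establish this, I would invoke the fact that the division algebra $D$ contains an unramified maximal subfield $L \subseteq D$ of degree $d = \sqrt{\dim_k D}$ over $k$, a standard consequence of the local classification of division algebras. Since $L/k$ is unramified, the ring of integers $\calO_L$ is contained in $\Lambda_D$, and the residue field extension is separable because the residue field of $R$ is finite. A direct computation in a splitting field (where $D \otimes_k L \cong M_d(L)$) shows that the restriction of $\trd_{D/k}$ to $L$ coincides with the field trace $\Tr_{L/k}$. Separability of the residue extension, together with standard ramification theory, yields $\Tr_{L/k}(\calO_L) = R$, and the desired surjectivity follows.

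The main technical input is the pair of structural results used above: that maximal orders over a complete DVR have the shape $M_m(\Lambda_D)$ with $\Lambda_D$ the unique maximal order in the division part, and that every local central division algebra possesses an unramified maximal subfield on which the reduced trace equals the ordinary trace. Both facts are documented in Reiner's book, and once they are in place the proof reduces to the elementary observation that the trace from an unramified local extension with finite residue field is surjective on integers.
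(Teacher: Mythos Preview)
Your proposal is correct and follows essentially the same route as the paper: reduce to a complete discrete valuation ring, use the structure theorem to write the maximal order as $M_m(\Lambda_D)$, reduce via the matrix trace to the division case, and then exploit the unramified maximal subfield of $D$ on which $\trd_{D/k}$ agrees with the field trace, concluding by surjectivity of the trace for unramified extensions with finite residue field.
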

 \begin{proof}
   Let $A$ be a central simple $k$-algebra and let $\Lambda\subset A$ be a maximal $R$-order.
   Since $\Lambda$ is maximal in $A$ if and only if all $\LP$-adic completions are maximal orders 
   (see (11.6) in \cite{Reiner2003}), and since smoothness of $\Lambda$ is a local property, we may assume that $R$ is a complete
   discrete valuation ring.
   Recall that $A$ is isomorphic to a matrix algebra $M_r(D)$ over a central division algebra $D$. Moreover, $D$ has a
   unique maximal $R$-order $\Delta \subseteq D$ and $\Lambda$ is (up to conjugation) the maximal order $M_r(\Delta)$ in $A$
   (see (17.3) in \cite{Reiner2003}). It is known that the reduced trace of a matrix $x =(x_{ij})_{i,j=1}^r \in M_r(D)$ 
   is given by 
   \begin{equation*}
           \trd_{A/k}(x) = \sum_{i=1}^r \trd_{D/k}(x_{ii})
   \end{equation*}
    (cf.~Cor.~2, IX.§2 in \cite{WeilBNT}). Hence we may assume that $A = D$ is a division algebra and
    $\Lambda = \Delta$ is the unique maximal order. Let $\dim_k D = n^2$ and let $\ell/k$ be the unique unramified 
    extension of $k$ of degree $[\ell:k] = n$. The field $\ell$ embeds into $D$ as a maximal subfield and 
    the reduced trace $\trd_{D/k}$ on the elements of $\ell$ agrees with the field trace $\Tr_{\ell/k}$ (cf.~proof of (14.9)
 in \cite{Reiner2003}).
    Let $o_\ell$ denote the valuation ring of $\ell$. The image of $o_\ell$ under the embedding $\ell \to D$ 
    lies in the maximal order $\Delta$. Finally the surjectivity of $\trd_{D/k}: \Delta \to R$ follows
    from the well-known surjectivity of the field trace $\Tr_{\ell/k}: o_\ell \to R$.
\end{proof}
 
 \subsection{Involutions and fixed point groups}\label{sec:FixedPointGroups}
  Let $A$ be a central simple $k$-algebra. 
  An \emph{involution} $\tau$ on $A$ is 
  an additive mapping $\tau: A \to A$ of order two such that $\tau(xy) = \tau(y) \tau(x)$ for all $x,y \in A$.
  We say that $\tau$ is of the \emph{first kind} if $\tau$ is $k$-linear. Otherwise, we say that $\tau$ is of the \emph{second kind}. In this article all involutions are of the first kind unless
  the contrary is explicitly stated. We will mostly focus on involutions of symplectic type.
  \begin{definition}\label{def:symplecticType}
  We say that an involution $\tau$ on $A$ is of \emph{symplectic type}, if
  there is a splitting field $\ell$ of the algebra $A$, a splitting 
  \begin{equation*}
        \varphi: A \otimes_k \ell \isomorph M_{2n}(\ell)
  \end{equation*}
  and a skew symmetric matrix $a \in M_{2n}(\ell)$ satisfying $\varphi(\tau(x)) = a \varphi(x)^T a^{-1}$ for all
  elements $x \in A\otimes_k \ell$. If this is the case, then every splitting (over any splitting field) has this property.
  \end{definition}
  
 Let $\tau: A \to A$ be an involution of the first kind. Let $\Lambda$ be an $R$-order in $A$ and assume that $\Lambda$ is $\tau$-stable.
 Since $\tau: \Lambda \to \Lambda$ is $R$-linear, we obtain a morphism of $R$-schemes
 \begin{equation*}
    \tau: \Lambda_a \to \Lambda_a.
 \end{equation*}
 We restrict $\tau$ to the unit group $\GL_\Lambda$ and compose it with the group inversion to obtain a homomorphism of group schemes
 \begin{equation*}
    \tau^*: \GL_\Lambda \to \GL_\Lambda.
 \end{equation*}
 We define $G(\Lambda,\tau)$ to be the group of fixed points of $\tau^*$, this is, for every commutative $R$-algebra $C$ we obtain
 \begin{equation*}
     G(\Lambda,\tau)(C) = \{\:x \in (\Lambda\otimes_R C)^\times\:|\:\tau(x)x=1\:\}.
 \end{equation*}
  We analyse the smoothness properties of group schemes constructed in this way. Define $\Sym(\Lambda,\tau) = \{\:x \in \Lambda\:|\:\tau(x) = x\:\}$ and note that this $R$-submodule
  of $\Lambda$ is even a direct summand of.
  \begin{lemma}\label{lem:symmetricElem}
     For every commutative $R$-algebra $C$, every $y \in \Lambda \otimes_R C$ and every $x \in \Sym(\Lambda,\tau) \otimes_R C$
     we have $\tau(y)xy \in \Sym(\Lambda,\tau)\otimes_R C$.
  \end{lemma}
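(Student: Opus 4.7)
The plan is to reduce the assertion to a direct algebraic manipulation by first upgrading $\tau$ to a $C$-linear involution on $\Lambda\otimes_R C$. Since $\tau\colon\Lambda\to\Lambda$ is $R$-linear, tensoring with $\id_C$ yields an additive (in fact $C$-linear) map $\tau_C := \tau\otimes\id_C$ on $\Lambda\otimes_R C$. Checking on pure tensors one verifies that $\tau_C^2 = \id$ and that $\tau_C(uv) = \tau_C(v)\tau_C(u)$ for all $u,v\in\Lambda\otimes_R C$; the anti-multiplicativity extends from pure tensors to arbitrary elements by bilinearity, using the involution property of $\tau$ on $\Lambda$.

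With $\tau_C$ in hand, the lemma reduces to the one-line computation
\begin{equation*}
\tau_C\bigl(\tau_C(y)\, x\, y\bigr) \;=\; \tau_C(y)\, \tau_C(x)\, \tau_C(\tau_C(y)) \;=\; \tau_C(y)\, x\, y,
\end{equation*}
where we use $\tau_C(x) = x$ (because $x\in\Sym(\Lambda,\tau)\otimes_R C$) and $\tau_C^2 = \id$. Hence $\tau_C(y)xy$ is fixed by $\tau_C$ and therefore lies in $\Sym(\Lambda,\tau)\otimes_R C$.

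The only point worth a brief remark is the identification of the $\tau_C$-fixed submodule of $\Lambda\otimes_R C$ with $\Sym(\Lambda,\tau)\otimes_R C$. This follows from the fact, noted immediately before the lemma, that $\Sym(\Lambda,\tau)$ is a direct summand of the $R$-module $\Lambda$, so that forming $\tau$-fixed points commutes with the base change $(-)\otimes_R C$. I do not anticipate any real obstacle; the content of the lemma is essentially the observation that the standard symmetrisation trick $y\mapsto \tau(y)xy$ carries over verbatim from $\Lambda$ to every base change $\Lambda\otimes_R C$.
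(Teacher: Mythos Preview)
Your computation showing that $\tau_C(y)\,x\,y$ is fixed by $\tau_C$ is correct, but the last step has a genuine gap: the direct-summand property of $E:=\Sym(\Lambda,\tau)$ in $\Lambda$ does \emph{not} force the $\tau_C$-fixed submodule of $\Lambda\otimes_R C$ to equal $E\otimes_R C$. A split inclusion $E\hookrightarrow\Lambda$ guarantees that $E\otimes_R C\to\Lambda\otimes_R C$ is injective, but says nothing about whether taking $\tau$-fixed points commutes with $-\otimes_R C$; that is governed by the cokernel side. For a concrete counterexample within the lemma's hypotheses, take $R=\mathbb{Z}$, let $A$ be the rational Hamilton quaternions, $\Lambda=\mathbb{Z}\langle 1,i,j,k\rangle$ the Lipschitz order, $\tau=\tau_c$ conjugation, and $C=\mathbb{Z}/2$. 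Then $E=\mathbb{Z}\cdot 1$ is a direct summand (with complement $\mathbb{Z} i\oplus\mathbb{Z} j\oplus\mathbb{Z} k$), yet $(\tau_c-\id)(a+bi+cj+dk)=-2(bi+cj+dk)$, so after reduction mod~$2$ one has $\tau_C=\id$ and \emph{every} element of $\Lambda\otimes_R C$ is $\tau_C$-fixed, whereas $E\otimes_R C$ is one-dimensional. The paper itself acknowledges this subtlety: in \S\ref{sec:symplecticAndPfaffian} it explicitly invokes flatness of $C$ to obtain the identification $\Sym(\Lambda\otimes_R C,\tau)=\Sym(\Lambda,\tau)\otimes_R C$.

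The paper's proof avoids the issue altogether. Instead of passing through $\tau_C$-fixedness, it expands $y=\sum_i u_i\otimes c_i$, reduces by linearity to $x=e\otimes c$, and uses commutativity of $C$ to pair the $(i,j)$ and $(j,i)$ terms, rewriting $\tau(y)xy$ as a $C$-linear combination of pure tensors whose first factors are $\tau(u_i)eu_i$ and $\tau(u_i)eu_j+\tau(u_j)eu_i$, both manifestly in $\Sym(\Lambda,\tau)$. This exhibits the element directly in $E\otimes_R C$ without any appeal to what the full $\tau_C$-fixed locus looks like.
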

  \begin{proof}
  We can write $y = \sum_i u_i \otimes c_i$ for certain $u_i \in \Lambda$ and $c_i \in C$.
  The claim is linear in $x$, hence we may assume $x = e \otimes c$ with $e \in \Sym(\Lambda,\tau)$ and $c \in C$.
  We calculate
  \begin{align*}
     \tau(y) x y &= \sum_{i,j} \tau(u_i)e u_j \otimes c c_i c_j \\
                 &= \sum_{i} \tau(u_i)e u_i \otimes c c_i^2 + \sum_{i < j} (\tau(u_i)e u_j +  \tau(u_j)e u_i)\otimes c c_i c_j,\\
  \end{align*}
  and we see that $\tau(y)xy$ is in $\Sym(\Lambda,\tau) \otimes_R C$ since $\tau(u_i)eu_i$ and $\tau(u_i)e u_j +  \tau(u_j)e u_i$ are elements of $\Sym(\Lambda,\tau)$.
  \end{proof}

  \begin{definition}\label{def:tausmooth}
     The order $\Lambda$ is called \emph{$\tau$-smooth} if the map $s: \Lambda \to \Sym(\Lambda, \tau)$ defined by $x \mapsto x + \tau(x)$ is surjective.
  \end{definition}
  Clearly $\tau$-smoothness is a local property.
  
  \begin{proposition}\label{prop:smoothFPGroups}
     If an $R$-order $\Lambda$ is $\tau$-smooth, then the scheme $G(\Lambda,\tau)$ is smooth.
  \end{proposition}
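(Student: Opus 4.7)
The plan is to verify the infinitesimal lifting criterion directly. By the usual induction on the nilpotency index (filter $I \supseteq I^2 \supseteq \cdots \supseteq I^n = 0$ and lift one square-zero step at a time) one reduces to the case of a square-zero ideal $I \subseteq C$ in a commutative $R$-algebra $C$, so it suffices to show that every $\bar{x} \in G(\Lambda,\tau)(C/I)$ lifts to $G(\Lambda,\tau)(C)$. Since $\GL_\Lambda$ is smooth (Lemma \ref{lem:schemeUnits}), I first pick a lift $y \in \GL_\Lambda(C)$ of $\bar{x}$ and set $z := \tau(y)y - 1$; because $\tau(\bar{x})\bar{x} = 1$, this element $z$ lies in $\Lambda \otimes_R I$.

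I would then look for a corrected lift of the form $x = y(1+w)$ with $w \in \Lambda \otimes_R I$. Since $I^2 = 0$, any product of two elements of $\Lambda \otimes_R I$ vanishes in $\Lambda \otimes_R C$, and a short computation yields
\begin{equation*}
    \tau(y(1+w))\, y(1+w) \,=\, 1 + z + w + \tau(w),
\end{equation*}
reducing the problem to solving the $R$-linear equation $w + \tau(w) = -z$. The crucial observation is that $z$ in fact lives in $\Sym(\Lambda,\tau)\otimes_R I$: applying Lemma \ref{lem:symmetricElem} with the symmetric element $1 \in \Sym(\Lambda,\tau)$ gives $\tau(y)\cdot 1\cdot y = \tau(y)y \in \Sym(\Lambda,\tau)\otimes_R C$, and since $\Sym(\Lambda,\tau)$ is a direct $R$-summand of $\Lambda$ one has $(\Sym(\Lambda,\tau)\otimes_R C)\cap(\Lambda\otimes_R I) = \Sym(\Lambda,\tau)\otimes_R I$. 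With $z$ safely in this submodule, $\tau$-smoothness kicks in: the surjection $s:\Lambda \to \Sym(\Lambda,\tau)$ stays surjective after tensoring with $I$ by right-exactness, so there exists $w \in \Lambda\otimes_R I$ with $w+\tau(w)=-z$. Finally $1+w$ is a unit with inverse $1-w$ (because $w^2 \in \Lambda\otimes_R I^2 = 0$), so $x = y(1+w) \in \GL_\Lambda(C)$ is the desired lift.

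The only non-formal point, and in my view the main obstacle, is controlling where the obstruction $z$ sits: if $z$ were merely known to be $\tau$-fixed in $\Lambda \otimes_R I$, which can be a strictly larger submodule than $\Sym(\Lambda,\tau)\otimes_R I$ (for instance when $2$ fails to be a unit in $C$), the equation $w+\tau(w)=-z$ need not be solvable, and $\tau$-smoothness alone would be insufficient to cancel the obstruction. Lemma \ref{lem:symmetricElem} applied to the element $1$ is precisely the tool that forces $z$ into the correct submodule; everything else is a routine piece of tensor-product bookkeeping combined with the surjectivity afforded by $\tau$-smoothness.
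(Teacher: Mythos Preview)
Your proof is correct and follows essentially the same approach as the paper: lift to $\GL_\Lambda(C)$, identify the obstruction $z=\tau(y)y-1$, use Lemma~\ref{lem:symmetricElem} with the symmetric element $1$ together with the direct-summand property to place $z$ in $\Sym(\Lambda,\tau)\otimes_R I$, invoke $\tau$-smoothness tensored with $I$ to solve $w+\tau(w)=-z$, and correct multiplicatively. The paper takes the same route (with the cosmetic difference $y'=y(1-w)$ instead of $y(1+w)$); your write-up is in fact slightly more direct, as the paper inserts an unnecessary intermediate step producing elements $u,v$ with $\tau(u)+u=\tau(y)y$ and $\tau(v)+v=1$ before arriving at the same conclusion.
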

   \begin{proof}
   We set $G := G(\Lambda,\tau)$. Let $C$ be a commutative $R$-algebra with an ideal $I \subseteq C$ such that $I^2=0$.
   We have to show that the canonical map $G(C) \to G(C/I)$ is surjective.
   Take $\bar{y} \in G(C/I)$. Since the unit group scheme $\GL_\Lambda$ is smooth (see \ref{sec:unitgroups}),
    we find $y \in \GL_\Lambda(C) = (\Lambda\otimes C)^\times$ which maps to $\bar{y}$ modulo $I$.
   Since $\bar{y}$ is in the fixed point group of $\tau^*$, this implies that
   \begin{equation*}
        \tau(y)y = 1 + \rho
   \end{equation*}
   with some $\rho \in \Lambda \otimes I$. 

  We consider $E:= \Sym(\Lambda, \tau)$ and we obtain $\tau(y)y \in E \otimes_R C$ by Lemma \ref{lem:symmetricElem}.
  Consequently, there is $u \in \Lambda\otimes_R C$ such that $\tau(u)+u = y$. 
  Moreover, we have $1 \in E$, thus there is some $v \in \Lambda\otimes_R C$ with $\tau(v) + v = 1$.
  We deduce that $\rho = \tau(u-v) + (u-v)$ is an element in $E \otimes_R C$, and thus
  \begin{equation*}
      \rho \in (E \otimes_R C) \cap (\Lambda \otimes_R I) = E \otimes_R I
  \end{equation*}
  As last step we use once again that $\Lambda$ is $\tau$-smooth and deduce that
  there is some $w \in \Lambda \otimes I$ with $\rho = \tau(w) + w$.
  We put $y' := y(1-w)$, which is congruent $\bar{y}$ modulo $I$ and satisfies
  \begin{equation*}
     \tau(y')y' = (1-\tau(w)) \tau(y)y (1-w) = (1-\tau(w))(1 +\rho)(1-w) =1 + \rho - \tau(w)- w = 1.
  \end{equation*}
  Therefore $y' \in G(C)$ and $y'$ maps to $\bar{y} \in G(C/I)$ under the canonical map.
\end{proof}
  It is possible to prove also the converse statement, however, this will not be needed in the sequel.

\subsection{Involutions of symplectic type and the pfaffian}\label{sec:symplecticAndPfaffian}
Let $A$ be a central simple $k$-algebra with an involution of symplectic type $\tau$. Let $\Lambda$ be a $\tau$-stable $R$-order in $A$.

\subsubsection{The pfaffian}\label{par:pfaffian}
Set $E:= \Sym(\Lambda,\tau)$ in the notation of section \ref{sec:FixedPointGroups}.
The inclusion $\iota: E \to \Lambda$ induces a morphism of $R$-algebras
\begin{equation*}
   S(\iota^*): S_R(\Lambda^*) \to S_R(E^*).
\end{equation*}
Recall that the reduced norm is given by a polynomial function $\nrd_{\Lambda/R} \in S_R(\Lambda^*)$ (see \ref{par:redNormTrace}).
We define $\nrd_{|E} := S(\iota^*)(\nrd_{\Lambda/R}) \in S_R(E^*)$. We will construct a \emph{pfaffian},
 i.e.~a polynomial $\pf_\tau \in S_R(E^*)$
such that $\nrd_{|E} = \pf^2_\tau$. 

Let $L/k$ be any field extension. It follows from (2.9) in \cite{BOInv} that 
for every $x \in E\otimes_R L$ the reduced norm $\nrd_{|E}(x)$ is a square in $L$.
Therefore, we may deduce that there is a polynomial $f \in S_R(E^*)$
such that 
\begin{equation*}
 f^2= \nrd_{|E}.
\end{equation*}
We normalise this polynomial $\pf_\tau := \pm f$ such that $\pf_\tau(1) = 1$ and we call $\pf_\tau$ the \emph{pfaffian} with respect to $\tau$.

 \begin{lemma}\label{lem:pfaffian}
Let $S(\tau^*)$ denote the automorphism of the symmetric $R$-algebra $S_R(\Lambda^*)$ which is induced by $\tau$.
The following assertions hold:
\begin{enumerate}[(i)]
 \item $S(\tau^*)(\nrd_{\Lambda/R}) = \nrd_{\Lambda/R}$, and
 \item for all $y \in \Lambda \otimes_R C$ and all ${x \in \Sym(\Lambda,\tau) \otimes_R C}$, we have 
  \begin{equation*}
    \pf_\tau(\tau(y)xy) = \nrd_{\Lambda/R}(y) \pf_\tau(x), 
  \end{equation*}
   where $C$ is any commutative $R$-algebra.
\end{enumerate}
\end{lemma}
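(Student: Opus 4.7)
The plan is to establish both assertions as polynomial identities in the coordinate rings of the relevant affine schemes and to reduce them to classical facts about the determinant by passing to a splitting field. The only subtlety is a sign in (ii), which I will fix by specialisation.

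For (i), I would choose a splitting $\varphi\colon A \otimes_k \ell \isomorph M_{2n}(\ell)$ over a splitting field $\ell/k$ of $A$. Since $\tau$ is a $k$-linear involution of the first kind, its transport under $\varphi$ is an involution of $M_{2n}(\ell)$, and by Definition~\ref{def:symplecticType} it has the form $x \mapsto a x^T a^{-1}$ for some skew-symmetric $a \in \GL_{2n}(\ell)$. Under $\varphi$ the reduced norm becomes the determinant, and the identity $\det(a x^T a^{-1}) = \det(x)$ yields $\det \circ \tau = \det$ as an element of $S_\ell(M_{2n}(\ell)^*)$. Transporting back through $S(\varphi^*)$ gives $S(\tau^*)(\nrd_{A/k}) = \nrd_{A/k}$ in $S_k(A^*)\otimes_k \ell$. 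Because $\Lambda$ is a finitely generated projective $R$-module spanning $A$, the natural maps $S_R(\Lambda^*) \hookrightarrow S_k(A^*) \hookrightarrow S_k(A^*)\otimes_k \ell$ are injective, so the identity descends to $S_R(\Lambda^*)$, yielding (i).

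For (ii), the analogous descent shows that $\nrd_{\Lambda/R}$ is multiplicative as a polynomial identity (it is $\det(AB)=\det(A)\det(B)$ pulled back from the split case). Combining this with (i) gives
\begin{equation*}
   \nrd_{|E}(\tau(y) x y) \:=\: \nrd(\tau(y))\,\nrd(x)\,\nrd(y) \:=\: \nrd(y)^2 \pf_\tau(x)^2
\end{equation*}
in $S_R((\Lambda \oplus E)^*)$. Setting $P(y,x) := \pf_\tau(\tau(y)xy)$ and $Q(y,x) := \nrd(y)\pf_\tau(x)$, this reads $P^2 = Q^2$ in a polynomial ring over the Dedekind ring $R$, which is an integral domain. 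Thus $(P-Q)(P+Q) = 0$ forces a uniform sign $P = \pm Q$. To identify the sign I would specialise at $y=1$: both sides evaluate to $\pf_\tau(x)$, so $(P+Q)(1,x) = 2\pf_\tau(x)$, which is nonzero because $\chr(k) = 0$ and $\pf_\tau$ is nonzero by the normalisation $\pf_\tau(1) = 1$. Hence $P+Q$ is a nonzero polynomial and we must have $P = Q$. The identity then holds functorially after base change to any commutative $R$-algebra $C$.

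The main—but mild—obstacle is the sign determination; everything else is a routine descent from the split case, relying only on the compatibility of $\nrd$ with splittings and the injectivity of $S_R(\Lambda^*) \hookrightarrow S_k(A^*)$.
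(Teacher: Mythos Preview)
Your argument is correct and follows the same overall strategy as the paper: both assertions are viewed as identities in the coordinate ring $S_R((\Lambda\oplus E)^*)$, and one reduces to the split situation via the injection into $S_k(A^*)\otimes_k\ell$. The paper's proof is terser---it simply observes that it suffices to verify the identities over fields and then cites (2.2) and (2.13) in \cite{BOInv} for the field case. Your treatment of (ii) is more self-contained: instead of invoking the known result over fields, you derive $P^2=Q^2$ from multiplicativity of $\nrd$ together with (i), and then resolve the global sign by specialising at $y=1$. This avoids the external reference at the cost of a short extra argument; conversely, the paper's citation-based proof makes transparent that nothing beyond the standard field-level theory of the pfaffian is being used. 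Either way the key inputs are the same: Lemma~\ref{lem:symmetricElem} to ensure $\tau(y)xy$ lands in $\Sym(\Lambda,\tau)\otimes_R C$, and the fact that the relevant coordinate ring is an integral domain (which you justify via the embedding into a polynomial ring over $k$).
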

\begin{proof}
  To prove the first claim we may work over fields. However, over fields this is the well-known
  statement (2.2) in \cite{BOInv}. 

  The same proof works for the second statement. Note that $\tau(y)xy$ lies in ${\Sym(\Lambda,\tau) \otimes_R C}$ by Lemma \ref{lem:symmetricElem}.
  Both are polynomial functions on $\Lambda \times \Sym(\Lambda,\tau)$. If they agree over all
  fields then they agree as polynomials. However, over fields this is the result (2.13) in \cite{BOInv}.
\end{proof}

\begin{remark}\label{rem:symplNrdOne}
 Consider the fixed point group scheme $G = G(\Lambda, \tau)$ associated with $\tau$.
Let $x \in G(C)$ for some commutative $R$-algebra $C$. We see from $\tau(x)x = 1$ and Lemma \ref{lem:pfaffian} that
\begin{equation*}
   \nrd_{\Lambda/R}(x) = \pf_\tau(\tau(x)x) = \pf_\tau(1) = 1.
\end{equation*}
Hence the reduced norm restricts to the trivial character on $G(\Lambda,\tau)$.
\end{remark}

\subsubsection{The cohomological pfaffian}
We study non-abelian Galois cohomology of $\tau^*$ with values in the groups $\GL_\Lambda(C)$ and $\SL_\Lambda(C)$. For the definition of non-abelian cohomology
we refer the reader to \cite{Serre1964}, \cite[p.~123-126]{Serre1979} or \cite[Ch.~VII]{BOInv}. We shall in this context often denote $\tau$ and $\tau^*$ by left exponents, i.e.~we write $\up{\tau^*}x$ for $\tau^*(x)$.

Let $C$ be a commutative $R$-algebra and assume that $C$ is flat as $R$-module. 
A \emph{cocycle} $b \in Z^1(\tau^*,\GL_\Lambda(C))$ is an element of $(\Lambda \otimes C)^\times$ which 
satisfies $b \up{\tau^*}b = 1$, or equivalently $b = \up{\tau}b$.
In other words 
\begin{equation*}
   Z^1(\tau^*, \GL_\Lambda(C)) = \Sym(\Lambda\otimes_R C, \tau) \cap \GL_\Lambda(C).
\end{equation*}
The assumption that $C$ is flat yields that
   $\Sym(\Lambda\otimes_R C, \tau) = \Sym(\Lambda, \tau) \otimes_R C$.
 Therefore we can apply the pfaffian associated with $\tau$ 
 to cocycles in $Z^1(\tau^*,\GL_\Lambda(C))$.
 Two cocycles $b$ and $c$ are \emph{cohomologous} if there is $y \in \GL_\Lambda(C)$
 such that $b = \up{\tau}ycy$.
  In this case it follows from Lemma \ref{lem:pfaffian} that $\pf_\tau(b) = \nrd_{\Lambda/R}(y)\pf_\tau(c)$.
  Therefore the pfaffian defines a morphism of pointed sets
  \begin{equation*}
      \pf_\tau: H^1(\tau^*,\GL_\Lambda(C)) \to C^\times/\nrd_{\Lambda/R}(\GL_\Lambda(C)).
  \end{equation*}
   By the same reasoning we obtain a morphism of pointed sets
  \begin{equation*}
      \pf_\tau: H^1(\tau^*,\SL_\Lambda(C)) \to \{\:x \in C^\times\:|\: x^2 = 1\:\}.
  \end{equation*}
  For simplicity we introduce the notation $C^{(2)} := \{\:x \in C^\times\:|\: x^2 = 1\:\}$
  and we define $C^{\times}_\Lambda := \nrd_{\Lambda/R}(\GL_\Lambda(C))$.

  \begin{proposition}[The cohomological diagram for symplectic involutions]\label{prop:diagramSymplectic}
   Let $\tau$ be an involution of symplectic type on $A$ and let $\Lambda$ be a $\tau$-stable $R$-order.
   For every commutative $R$-algebra $C$ which is flat as $R$-module, there is 
   a commutative diagram of pointed sets with exact rows.
   \begin{equation*}
      \begin{CD}
          C^{(2)} \cap C^\times_\Lambda @>{\delta}>> H^1(\tau^*, \SL_\Lambda(C)) @>{j_*}>> H^1(\tau^*, \GL_\Lambda(C)) @>{\nrd}>> C^\times/(C^\times_\Lambda)^2 \\
                 @|                       @V{\pf_\tau}VV                         @V{\pf_\tau}VV                        @|  \\
          C^{(2)} \cap C^\times_\Lambda  @>>>        C^{(2)}                  @>>>        C^\times/C^\times_\Lambda  @>{\cdot^2}>>  C^\times/(C^\times_\Lambda)^2   
       \end{CD}
   \end{equation*}
  The map $\delta$ is injective and the lower row is an exact sequence of groups.
  Here $j_*$ denotes the map induced by the inclusion $j: \SL_\Lambda(C) \to \GL_\Lambda(C)$.
\end{proposition}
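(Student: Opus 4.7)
My plan is to obtain the top row as the six-term non-abelian cohomology sequence associated with the $\tau^*$-equivariant short exact sequence
\begin{equation*}
   1 \to \SL_\Lambda(C) \to \GL_\Lambda(C) \xrightarrow{\nrd_{\Lambda/R}} C^\times_\Lambda \to 1.
\end{equation*}
By Lemma \ref{lem:pfaffian}(i), $\nrd(\tau^*(x)) = \nrd(x)^{-1}$, so $\tau^*$ acts on the abelian group $C^\times_\Lambda$ by inversion. Elementary cyclic-group cohomology then identifies $H^0(\tau^*, C^\times_\Lambda)$ with $C^{(2)} \cap C^\times_\Lambda$ and $H^1(\tau^*, C^\times_\Lambda)$ with $C^\times_\Lambda/(C^\times_\Lambda)^2$, which embeds canonically into $C^\times/(C^\times_\Lambda)^2$. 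Substituting these identifications into the standard six-term sequence produces the top row together with the claimed exactness at the middle two terms. Injectivity of $\delta$ then follows from exactness: its fibre over the basepoint is the image of $\nrd \colon H^0(\tau^*, \GL_\Lambda(C)) \to H^0(\tau^*, C^\times_\Lambda)$, which vanishes because $H^0(\tau^*, \GL_\Lambda(C)) = G(\Lambda, \tau)(C)$ and the reduced norm is trivial on this group by Remark \ref{rem:symplNrdOne}.

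Next I would define the vertical pfaffian maps. The $R$-flatness of $C$ lets one identify $\Sym(\Lambda \otimes_R C, \tau)$ with $\Sym(\Lambda, \tau) \otimes_R C$, so every $\tau^*$-cocycle in $\GL_\Lambda(C)$ lies in the natural domain of the pfaffian polynomial. The formula $\pf_\tau(\tau(y) c y) = \nrd(y) \pf_\tau(c)$ from Lemma \ref{lem:pfaffian}(ii) shows that $\pf_\tau$ descends to a well-defined morphism of pointed sets $H^1(\tau^*, \GL_\Lambda(C)) \to C^\times/C^\times_\Lambda$. On $\SL$-cocycles, the identity $\pf_\tau(b)^2 = \nrd(b) = 1$ constrains the value to $C^{(2)}$, yielding the map $H^1(\tau^*, \SL_\Lambda(C)) \to C^{(2)}$. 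The right square then commutes because $\pf_\tau(b)^2 = \nrd(b)$, and the middle square is tautological.

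The key verification is the left square. Given $u \in C^{(2)} \cap C^\times_\Lambda$, choose a lift $g \in \GL_\Lambda(C)$ with $\nrd(g) = u$; the element $\tau(g) g$ is symmetric and, since $u^2 = 1$, lies in $\SL_\Lambda(C)$, so it is a valid $\SL$-cocycle representing $\delta(u)$. Applying Lemma \ref{lem:pfaffian}(ii) with $y = g$ and $x = 1$ gives $\pf_\tau(\tau(g) g) = \nrd(g) \pf_\tau(1) = u$, as required. Finally, the exactness of the bottom row is elementary: the kernel of $C^{(2)} \to C^\times/C^\times_\Lambda$ is $C^{(2)} \cap C^\times_\Lambda$ by definition, and if $x^2 \in (C^\times_\Lambda)^2$ then writing $x^2 = v^2$ with $v \in C^\times_\Lambda$ yields $xv^{-1} \in C^{(2)}$ representing $x$ modulo $C^\times_\Lambda$. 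The main potential obstacle is bookkeeping with conventions for the connecting map and the flat base change; every nontrivial mathematical input reduces to Lemma \ref{lem:pfaffian}(ii).
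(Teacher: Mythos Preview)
Your proof is essentially the same as the paper's: both extract the top row from the long exact non-abelian cohomology sequence attached to $1 \to \SL_\Lambda(C) \to \GL_\Lambda(C) \to C^\times_\Lambda \to 1$ with $\tau^*$ acting by inversion on the quotient, both invoke Remark~\ref{rem:symplNrdOne} to simplify the $H^0$-portion, and both verify the three squares via Lemma~\ref{lem:pfaffian}.

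There is one small gap. For the injectivity of $\delta$ you argue that the fibre over the basepoint is the image of $\nrd$ on $H^0(\tau^*,\GL_\Lambda(C))$, which is trivial. But $\delta$ lands in a pointed set, so a trivial kernel does not by itself force injectivity; you need that \emph{every} fibre is a singleton. This does follow, either by quoting the orbit description of the fibres of the connecting map (Serre, \emph{Cohomologie Galoisienne}, I.5.4, Prop.~39: the fibres are orbits of $\GL_\Lambda(C)^{\tau^*}$ acting via $\nrd$, and that action is trivial by Remark~\ref{rem:symplNrdOne}), or---more cheaply, and this is what the paper does---by first checking the left square and then observing that $\pf_\tau\circ\delta$ is the identity on $C^{(2)}\cap C^\times_\Lambda$, so $\delta$ has a retraction.

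A cosmetic remark: your representative $\tau(g)g$ for $\delta(u)$ is not the one produced by the standard convention $g^{-1}\cdot{}^{\tau^*}\!g = g^{-1}\tau(g)^{-1}$, but it is cohomologous to it (replace the lift $g$ by $\tau(g)^{-1}$, which also has reduced norm $u$); this causes no trouble since both have pfaffian $u$.
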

\begin{proof}
 The short exact sequence of groups
  \begin{equation*}
      1 \longrightarrow \SL_\Lambda(C) \stackrel{j}{\longrightarrow} \GL_\Lambda(C) \stackrel{\nrd}{\longrightarrow} C^\times_\Lambda \longrightarrow 1
  \end{equation*}
  is even an exact sequence of groups with $\tau^*$-action, where $\tau^*$ acts on $C^\times_\Lambda$ by inversion.
  Consider the initial segment of the associated long exact sequence in the cohomology (see Prop.~I.38 \cite{Serre1964}): 
  \begin{equation*}
      1 \longrightarrow \SL_\Lambda(C)^{\tau^*} \stackrel{j}{\longrightarrow} \GL_\Lambda(C)^{\tau^*} \stackrel{\nrd}{\longrightarrow} C^\times_\Lambda\cap C^{(2)}  \longrightarrow \dots
  \end{equation*}
  It follows from Remark \ref{rem:symplNrdOne} that 
  $\SL_\Lambda(C)^{\tau^*} \stackrel{j}{\longrightarrow} \GL_\Lambda(C)^{\tau^*}$
  is bijective. Thus the long exact sequence takes the form
  \begin{equation*}
      1 \longrightarrow C^\times_\Lambda\cap C^{(2)} \stackrel{\delta}{\longrightarrow} H^1(\tau^*, \SL_\Lambda(C)) \longrightarrow H^1(\tau^*, \GL_\Lambda(C)) \longrightarrow H^1(\tau^*, C^\times_\Lambda).
  \end{equation*}
  It is easy to see that $H^1(\tau^*, C^\times_\Lambda) = C^\times_\Lambda/(C^\times_\Lambda)^2$ which is a subgroup of
  $C^\times/(C^\times_\Lambda)^2$. Hence we simply replace the last term by $C^\times/(C^\times_\Lambda)^2$.
  This yields the upper row of the diagram.
  It is an easy exercise to verify that the lower row is an exact sequence of groups.

  It remains to verify the commutativity of the rectangles.
  The middle one is obviously commutative by definition of the pfaffian in the cohomology.
  For the last rectangle we simply use that $\pf_\tau(g)^2 = \nrd(g)$ for all $g \in Z^1(\tau^*,\GL_\Lambda(C))$ by the construction of the pfaffian.

  Consider the first rectangle. We recall the definition of the connecting morphism~$\delta$:
  Given $c \in C^\times_\Lambda \cap C^{(2)}$, we can find an element $g \in \GL_\Lambda(C)$ such that $\nrd_{\Lambda/R}(g) = c$, then
   $\delta(c)$ is defined to be the class of $g^{-1}\up{\tau^*}g$.
  The pfaffian of $g^{-1}\up{\tau^*}g$ is
   \begin{equation*}
      \pf_\tau(g^{-1}\up{\tau^*}g) = \nrd(g)^{-1} = c^{-1} = c
   \end{equation*}
   (see Lemma \ref{lem:pfaffian}).
  This proves the commutativity of the first rectangle.

  Finally, note that $\delta$ is injective since $\pf_\tau \circ \delta$ is injective.
\end{proof}
\begin{corollary}\label{cor:imageJ} An element
   $x \in H^1(\tau^*, \GL_\Lambda(C))$ lies in the image of $j_*$ if and only if
  $\pf_\tau(x)$ lies in the image of the canonical map $C^{(2)} \to C^\times/C^\times_\Lambda$.
\end{corollary}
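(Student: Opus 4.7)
The plan is to deduce this corollary by a direct diagram chase in the commutative diagram of Proposition \ref{prop:diagramSymplectic}, without introducing any new ingredients. The strategy is to translate each of the two conditions in the statement into a single common condition in $C^\times/(C^\times_\Lambda)^2$ using the exactness of the two rows, and then observe that the rightmost rectangle identifies them.

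First I would extract the two exactness statements. From the upper row at $H^1(\tau^*, \GL_\Lambda(C))$ the class $x$ lies in the image of $j_*$ if and only if $\nrd(x)$ is trivial in $C^\times/(C^\times_\Lambda)^2$. From the lower row at $C^\times/C^\times_\Lambda$ a class $c \in C^\times/C^\times_\Lambda$ belongs to the image of $C^{(2)} \to C^\times/C^\times_\Lambda$ if and only if its image under the squaring map vanishes, i.e.\ $c^2 = 1$ in $C^\times/(C^\times_\Lambda)^2$. I would then apply the second of these with $c = \pf_\tau(x)$.

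Finally I would invoke the commutativity of the rightmost rectangle of the diagram, which gives $(\pf_\tau(x))^2 = \nrd(x)$ as classes in $C^\times/(C^\times_\Lambda)^2$ (this is essentially the defining property $\pf_\tau^2 = \nrd_{|E}$ of the pfaffian). The two conditions extracted in the previous step are then literally the same, and the equivalence asserted by the corollary follows immediately. I do not expect any genuine obstacle: everything has already been packaged into Proposition \ref{prop:diagramSymplectic}, and the only auxiliary point that is worth a sentence of explanation is the exactness of the bottom row at $C^\times/C^\times_\Lambda$, which reduces to the elementary observation that if $c^2 \in (C^\times_\Lambda)^2$, say $c^2 = d^2$ with $d \in C^\times_\Lambda$, then $cd^{-1}$ lies in $C^{(2)}$ and represents the same class as $c$ in $C^\times/C^\times_\Lambda$.
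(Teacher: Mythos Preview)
Your proposal is correct and is essentially the same diagram chase as the paper's proof; both rely solely on Proposition~\ref{prop:diagramSymplectic}. The only cosmetic difference is that the paper handles the forward implication via the commutativity of the \emph{middle} rectangle (if $x=j_*(y)$ then $\pf_\tau(x)$ is the image of $\pf_\tau(y)\in C^{(2)}$), whereas you treat both directions symmetrically through the rightmost rectangle and the exactness of the two rows.
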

\begin{proof}
   Let  $\alpha: C^{(2)} \to C^\times/C^\times_\Lambda$ denote the canonical map.
   Suppose the class ${x \in H^1(\tau^*, \GL_\Lambda(C))}$ is in the image of $j_*$, then we obtain immediately that $\pf_\tau(x)$ lies
   in the image of $\alpha$.

   Conversely, suppose $\pf_\tau(x) = \alpha(u)$ for some $u \in C^{(2)}$. 
   Then the diagram shows that $\nrd_{\Lambda/R}(x) = 1$ in $C^\times/(C_\Lambda^{\times})^2$
   and therefore $x$ lies in the image of $j_*$.
\end{proof}

\begin{remark}[Twisting involutions]\label{rem:twistingInvolutions}
 Let $A$ be a central simple $k$-algebra with an involution $\tau$ of symplectic type and let $\Lambda$ be a $\tau$-stable $R$-order.

Given an element $b \in \Sym(\Lambda, \tau) \cap \Lambda^{\times}$, we can
twist the involution $\tau$ with $b$. More precisely, we define $\tau|b: A \to A$ by $x \mapsto b \up{\tau}x b^{-1}$.
It is easily verified that this is again an involution on $A$, and since $b\in \Lambda^\times$, the order $\Lambda$ is $\tau|b$-stable.
Note that $\tau|b$ is again an involution of symplectic type.

Suppose $\Lambda$ is $\tau$-smooth, we claim that $\Lambda$ is $\tau|b$-smooth as well.
Take an element ${y \in \Sym(\Lambda,\tau|b)}$, this is $y = b \up{\tau}y b^{-1}$.
Consequently, $yb \in \Sym(\Lambda, \tau)$ and by \mbox{$\tau$-smoothness} there is an element $z \in \Lambda$ which satisfies
$\up{\tau}z + z = yb$. The element $b$ is a unit in $\Lambda$, hence we may write $z = wb$ for $w = zb^{-1} \in \Lambda$ and
it follows that $\up{\tau|b}w + w = y$. We have shown that $\Lambda$ is $\tau|b$-smooth.

Finally, for all $b \in \Sym(\Lambda, \tau) \cap \Lambda^{\times}$ we have
$(\tau|b)^* = \inn(b) \circ \tau^*$ on the group scheme $\GL_\Lambda$. Since $b = \up{\tau}b$ is equivalent to $b \up{\tau^*}b = 1$, such an
element $b$ is a cocycle for $H^1(\tau^*,\Lambda^\times)$. If we now twist $\tau^*$ with the cocycle $b$ (cf.~Section \ref{sec:RohlfsMethod}),
 we obtain 
\begin{equation*}
 \tau^*|b := \inn(b) \circ \tau^* = (\tau|b)^*.
\end{equation*}
\end{remark}

\subsection{Hermitian forms and non-abelian Galois cohomology}
We shall also need a result due to Fainsilber and Morales from the theory of hermitian forms.
Let $A$ be a central simple $k$-algebra and let $\tau$ be an involution on $A$. In this short section it is not important whether or not 
$\tau$ is of the first or of the second kind.

The notion of $\tau$-smoothness is related to the theory of even hermitian forms. 
Let $\Lambda$ be a $\tau$-stable $R$-order in $A$ and let $M$ be a finitely generated and projective right $\Lambda$-module.
A hermitian form $h$ (or more precisely a $1$-hermitian form) with respect to $\tau$ on $M$ is said to be \emph{even}
if there is a $\tau$-sesquilinear form $s: M \times M \to \Lambda$ such that $h = s + s^*$. Here $s^*$ is the sesquilinear form
 defined by 
\begin{equation*}
    s^*(x,y) := \up{\tau}s(y,x).
\end{equation*}
It follows immediately that $\Lambda$ is $\tau$-smooth if and only if every hermitian form on $\Lambda$ (considered as right $\Lambda$-module)
is even. This is useful since even hermitian forms can be handled easier than arbitrary hermitian forms.

We consider the automorphism $\tau^*$ of $\Lambda^\times$ defined as the composition of $\tau$ and the group inversion.
Similarly we obtain $\tau^*$ on $A^\times$.
Here it is not necessary to consider $\tau^*$ as a morphism of group schemes, which is a little bit more tedious if $\tau$ is of the second kind.
We will need a Theorem of Fainsilber-Morales in the following paraphrase:
\begin{theorem}[Fainsilber, Morales \cite{FainsilberMorales1999}]\label{thm:FainsilberMorales}
  Let $k$ be a field which is complete for a discrete valuation and let $R$ be its valuation ring.
  Let $A$ be a central simple $k$-algebra with involution $\tau$.
  Suppose $\Lambda$ is a $\tau$-stable maximal $R$-order in $A$. If $\Lambda$ is $\tau$-smooth, then
  the canonical map
  \begin{equation*}
      j_*: H^1(\tau^*, \Lambda^{\times}) \to H^1(\tau^*, A^\times)
  \end{equation*}
  is injective.
\end{theorem}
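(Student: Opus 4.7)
The plan is to interpret $j_*$ as the scalar-extension map between isometry classes of hermitian forms on rank-one modules and then to appeal to the local classification of even hermitian lattices due to Fainsilber and Morales.

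First I would set up a dictionary between non-abelian $\tau^*$-cocycles and hermitian forms. A cocycle $b\in Z^1(\tau^*,\Lambda^\times)$ is precisely a unit $b\in\Lambda^\times$ with $\up{\tau}b=b$, and the assignment
\[
   b \longmapsto h_b, \qquad h_b(x,y):= \up{\tau}x\, b\, y,
\]
yields a bijection between $Z^1(\tau^*,\Lambda^\times)$ and the set of nondegenerate hermitian forms on the free right $\Lambda$-module $\Lambda$ of rank one. Under this dictionary two cocycles $b_1,b_2$ are cohomologous (i.e.\ $b_2 = \up{\tau}y\, b_1\, y$ for some $y\in\Lambda^\times$) if and only if right-multiplication by $y$ gives an isometry $(\Lambda,h_{b_2})\to(\Lambda,h_{b_1})$. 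Hence $H^1(\tau^*,\Lambda^\times)$ is naturally identified with the set of isometry classes of nondegenerate rank-one hermitian forms over $(\Lambda,\tau)$, and similarly $H^1(\tau^*,A^\times)$ with the analogous set over $(A,\tau)$. Under these identifications $j_*$ becomes the natural scalar-extension map $h\mapsto h\otimes_\Lambda A$.

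Next I would invoke the $\tau$-smoothness hypothesis. As recalled just before the statement, $\tau$-smoothness of $\Lambda$ is equivalent to the condition that every hermitian form on the free rank-one right $\Lambda$-module $\Lambda$ is even. Consequently, the classes in $H^1(\tau^*,\Lambda^\times)$ classify \emph{even} rank-one hermitian lattices over $\Lambda$, and the injectivity of $j_*$ amounts to the statement that two such lattices whose generic fibres are isometric are already isometric over $\Lambda$.

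This last assertion is precisely the local classification result of Fainsilber and Morales~\cite{FainsilberMorales1999}: over a complete discrete valuation ring, two even hermitian lattices over a maximal $\tau$-stable order in a central simple algebra are isometric as soon as their generic hermitian spaces are. The main obstacle is therefore the invocation of this classification, which relies crucially both on the maximality of $\Lambda$ (which, by the structure theory of local central simple algebras, reduces $\Lambda$ up to conjugation to a matrix algebra over the unique maximal order in the underlying division algebra) and on the evenness hypothesis, which is essential in residue characteristic $2$ in order to rule out additional local invariants beyond those captured by the generic hermitian form.
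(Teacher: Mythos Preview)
Your proposal is correct and follows exactly the approach the paper indicates: the paper does not give a detailed proof but simply remarks that, compared with \cite{FainsilberMorales1999}, the assumption of $\tau$-smoothness is added to eliminate the restriction on the residual characteristic, and that ``the proof is almost identical.'' Your outline---translating $H^1(\tau^*,-)$ into isometry classes of hermitian forms, using $\tau$-smoothness to ensure evenness, and then invoking the local classification of even hermitian lattices over maximal orders from \cite{FainsilberMorales1999}---is precisely this almost-identical proof spelled out.
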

Compared with \cite{FainsilberMorales1999} we add the assumption of $\tau$-smoothness to eliminate the restriction on the residual characteristic. The proof is almost identical.

\section{An adelic reformulation of Harder's Gau\ss-Bonnet Theorem}\label{sec:HardersGB}
We briefly describe an adelic reformulation of Harder's Gau\ss-Bonnet Theorem (see \cite{Harder1971}) which hinges on the notion of smooth group scheme.

Let $F$ be an algebraic number field and let $\calO$ denote its ring of integers. Let $G$ be a connected semisimple algebraic group defined over $F$.
We denote by $G_\infty$ the associated real Lie group, i.e.
\begin{equation*}
   G_\infty = G(F \otimes_\bbQ \bbR) =\prod_{v \in V_\infty} G(F_v).
\end{equation*}
The group $G_\infty$ is a real semisimple Lie group.

\subsection{The Euler-Poincar\'e measure}
We define what we mean by the \emph{compact dual group} of $G_\infty$, since the definition differs from author to author.
Let $\LG_\infty$ be the real Lie algebra of $G_\infty$ and let $\LG_{\infty,\bbC}$ denote its complexification.
Moreover, let $K_\infty$ be a maximal compact subgroup of $G_\infty$ and consider the associated Cartan decomposition
\begin{equation*}
    \LG_\infty = \LK_\infty \oplus \LP.
\end{equation*}
The real vector space $\LU:= \LK_\infty \oplus i \LP \subseteq \LG_{\infty,\bbC}$ is a real Lie subalgebra of $\LG_{\infty,\bbC}$ 
and is even a compact real form of $\LG_{\infty,\bbC}$ (cf.~\cite[p.~360]{Knapp2002}).
Let $G_u$ be the unique connected (a~priori virtual) Lie subgroup of $G(F\otimes_\bbQ\bbC)$ with Lie algebra $\LU$.
Since the real semisimple Lie algebra $\LU$ is a compact form, the Lie group $G_u$ is compact and thus closed in $G(F\otimes_\bbQ\bbC)$
(see IV, Thm.~4.69 in \cite{Knapp2002}).
Further we see that the connected component $K^0_\infty$ is a subgroup of $G_u$.
We say that $G_u$ is the \emph{compact dual group} of $G_\infty$ containing $K^0_\infty$. Note that the dual group depends on the algebraic group $G$.

Let $B: \LG_\infty \times \LG_\infty \to \bbR$ be a non-degenerate $\bbR$-bilinear form such that $\LK_\infty$ and $\LP$ are orthogonal.
We extend $B$ the a $\bbC$-bilinear form (again denoted $B$) on $\LG_{\infty,\bbC}$. Note that $B$ restricted to $\LU$ is a non-degenerate $\bbR$-bilinear form
$\LU \times \LU \to \bbR$. We obtain corresponding right invariant volume densities on $G_\infty$ and on $G_u$ which will be denoted by~$\vold{B}$.

We define $X := \bsl{K_\infty}{G_\infty}$. Let $\Gamma \subseteq G(F)$ be a torsion-free arithmetic group. Harder's Gau\ss-Bonnet Theorem shows that integration
over $G_\infty/ \Gamma$ with the Euler-Poincar\'e measure $\mu_\chi$ (cf.~Serre \cite[§3]{Serre1971}) yields the Euler characteristic of $\Gamma$ -- even if $\Gamma$ is not cocompact.
Via Hirzebruch's proportionality principle one has the following
formula for the Euler-Poincar\'e measure on $G_\infty$ (cf. Harder \cite{Harder1971} and Serre \cite{Serre1971}).
\begin{theorem}\label{thm:EulerPoincareMeasure}
   If $\dim(X)$ is odd or if $\rk(\LK_{\infty,\bbC}) < \rk(\LG_{\infty,\bbC})$, then $\mu_\chi = 0$ is the Euler-Poincar\'e measure.
   Otherwise, if $\rk(\LG_{\infty,\bbC}) = \rk(\LK_{\infty,\bbC})$ and $\dim(X) = 2p$ is even, then 
   \begin{equation*}
      \mu_\chi := \frac{(-1)^p \left|W(\LG_{\infty,\bbC})\right|}{\left|\pi_0(G_\infty)\right| \: \left|W(\LK_{\infty,\bbC})\right|} \vol_B(G_u)^{-1} \vold{B}.
   \end{equation*}
   Here  $\pi_0(G_\infty) = G_\infty/G_\infty^0$ and $W(\LG_{\infty,\bbC})$ (resp.~$W(\LK_{\infty,\bbC})$) denotes the Weyl group of the complexified Lie algebra $\LG_{\infty,\bbC}$ (resp.~$\LK_{\infty,\bbC}$).

\end{theorem}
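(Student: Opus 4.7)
The plan is to identify $\mu_\chi$ as a scalar multiple of $\vold{B}$ and to compute the scalar using Hirzebruch's proportionality principle for the compact dual symmetric space $X_u := \bsl{K_\infty^0}{G_u}$. Serre's characterisation (\cite{Serre1971}) asserts that $\mu_\chi$ is the unique signed right-invariant measure on $G_\infty$ with $\int_{G_\infty/\Gamma} d\mu_\chi = \chi(\Gamma)$ for every torsion-free arithmetic $\Gamma$. Since $\vold{B}$ is a non-zero right-invariant measure, there is a unique $c \in \bbR$ with $\mu_\chi = c \cdot \vold{B}$, and the task reduces to computing $c$. By Harder's theorem (\cite{Harder1971}) the density $c \cdot \vold{B}$ coincides, at the base point, with the appropriately normalised Chern-Gauss-Bonnet density of $X$.

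First I would dispose of the two vanishing cases. If $\dim(X)$ is odd, the Chern-Gauss-Bonnet density vanishes identically on odd-dimensional manifolds, so $c = 0$. If instead $\rk \LK_{\infty,\bbC} < \rk \LG_{\infty,\bbC}$, the Hopf-Samelson theorem gives $\chi(X_u) = 0$, and the Chern-Gauss-Bonnet form on $X_u$, being $G_u$-invariant on a compact homogeneous space with zero integral, must itself vanish; transporting via the duality $\omega \leftrightarrow (-1)^p \omega_u$ gives again $c = 0$.

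In the non-vanishing case (equal ranks, $\dim X = 2p$), applying Chern-Gauss-Bonnet on the compact manifold $X_u$ combined with the Hopf-Samelson theorem yields the identity $\int_{X_u} \omega_u = \chi(X_u) = |W(\LG_{\infty,\bbC})|/|W(\LK_{\infty,\bbC})|$. The duality between $X$ and $X_u$ reverses the sign of the sectional curvature and multiplies the Euler form by $(-1)^p$; combining this with Harder's identification of the density of $\mu_\chi$ with the Chern-Gauss-Bonnet form on $X$ yields
\begin{equation*}
c \cdot \vol_B(X_u) \: = \: (-1)^p \chi(X_u).
\end{equation*}
Writing $\vol_B(X_u) = \vol_B(G_u)/\vol_B(K_\infty^0)$, cancelling $\vol_B(K_\infty^0)$ against the corresponding factor implicit in the passage from the Haar measure on $G_\infty$ to the induced measure on $X$, and inserting the factor $|\pi_0(G_\infty)|^{-1}$ to account for the fact that only the identity component of $G_\infty$ covers $X$, yields the stated formula.

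The main obstacle is the precise bookkeeping of signs and volume normalisations: verifying the $(-1)^p$ arising from the compact/non-compact duality, correctly passing between Haar measures on $G_\infty$, $G_u$ and $K_\infty^0$ with a single choice of bilinear form $B$, and correctly tracking the $|\pi_0(G_\infty)|$ factor when $G_\infty$ is disconnected. These are standard but tedious verifications; once dispatched, the formula is a direct transcription of the pointwise identity above combined with the Hopf-Samelson count.
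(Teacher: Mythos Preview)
The paper does not actually prove this theorem: it is stated as a known result, attributed to Harder \cite{Harder1971} and Serre \cite{Serre1971} and derived ``via Hirzebruch's proportionality principle,'' with no argument given beyond that attribution. Your sketch follows precisely the route the paper gestures at---Serre's characterisation of $\mu_\chi$, Chern--Gauss--Bonnet on the compact dual, Hopf--Samelson for $\chi(X_u)$, and the $(-1)^p$ sign from the curvature reversal under duality---so in that sense your approach and the paper's are the same. Your outline is correct in its main lines; the caveats you yourself flag (the $|\pi_0(G_\infty)|$ bookkeeping and the cancellation of $\vol_B(K_\infty^0)$ when passing between Haar measures on $G_\infty$, $G_u$, and the induced measure on $X$) are exactly the places where a full write-up would need care, but there is no missing idea.
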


\subsection{The adelic reformulation}
 Let $\bbA$ denote the ring of adeles of $F$ and let $\bbA_f$ denote the ring of finite adeles.
 Let $G$ be a connected semisimple algebraic group defined over $F$.
Let $K_f \subseteq G(\bbA_f)$ be an open compact subgroup of the locally compact group $G(\bbA_f)$.
 Borel showed that $G(\bbA)$ is the disjoint union of a finite number $m$ of double cosets, i.e.
  \begin{equation*}
       G(\bbA) = \bigsqcup_{i =1}^m G_\infty K_f x_i G(F)
  \end{equation*}
 for some representatives $x_1, \dots, x_m \in G(\bbA_f)$ (see Thm.~5.1 in \cite{Borel1963}).
  For every $i=1,\dots, m$ we obtain an arithmetic subgroup $\Gamma_i \subseteq G(F)$ defined by
  \begin{equation*}
     \Gamma_i := G(F) \cap x_i^{-1} K_f x_i. 
  \end{equation*}
  There is a $G_\infty$-equivariant homeomorphism
   \begin{equation}\label{eq:GinvHomeo}
       \bsl{K_f}{G(\bbA)}/G(F)  \stackrel{\simeq}{\longrightarrow} \bigsqcup_{i=1}^m G_\infty/\Gamma_i.
   \end{equation}
   Here the right hand side denotes the topologically disjoint union.
   
   \begin{remark}\label{rem:NoComplexPlaces}
    Define $X = \bsl{K_\infty}{G_\infty}$.
    Suppose that
    $G(F)$ acts freely on $\bsl{K_\infty K_f}{G(\bbA)}$. 
    This is the case if and only if
    the groups $\Gamma_i$ are torsion-free for all $i = 1, \dots, m$.  
    If $\dim(X)$ is odd or if $\rk(\LK_{\infty,\bbC})<\rk(\LG_{\infty,\bbC})$, then
   \begin{equation*}
     \chi(\bsl{K_\infty K_f}{G(\bbA)}/G(F)) = 0.
    \end{equation*}
   This follows immediately from Harder's Gau\ss-Bonnet Theorem and the homeomorphism in equation \eqref{eq:GinvHomeo}.
   
   Note further that if $F$ has a complex place, then $\rk(\LK_{\infty,\bbC})<\rk(\LG_{\infty,\bbC})$ is always satisfied.
   Therefore we may restrict to the case where $F$ is totally real.
  \end{remark}
   
   \subsubsection{The Tamagawa measure}
   We derive a description of the Tamagawa measure in terms of the local volume densities. For a thorough definition of the Tamagawa measure we refer the reader to Oesterle
   \cite{Oesterle1984}.
   Let $G$ be a connected semisimple linear algebraic $F$-group of dimension $d$.
   Let $\LG = \Lie(G)(F)$ be the Lie algebra of $G$ over $F$. 
   
    Fix a non-degenerate $F$-bilinear form $B: \LG \times \LG \to F$ on the Lie algebra.
    For every place $v \in V$
    we have the left invariant
    volume density $\vold{B}_v$ attached to $B$ on the $F_v$-analytic manifold $G(F_v)$.
    More precisely, the volume density is uniquely determined by the relation $\vold{B}(e_1\wedge \dots \wedge e_d) = |\det(B(e_i,e_j))|^{1/2}$ for all 
    $e_1, \dots, e_d \in \LG$.
    
    We fix Haar measures $\mu_v$ on $F_v$ for every place $v$ such that
   \begin{itemize}
     \item  $\mu_v(\calO_v)=1$ if $v\in V_f$ is a finite place,
     \item  $\mu_v([0,1]) = 1$ if $v$ is a real place, and
     \item  $\mu_v([0,1]+[0,1]i) = 2$ if $v$ is a complex place.
   \end{itemize}
   Using these choices of Haar measures a density on $G(F_v)$ defines a measure on the analytic manifold $G(F_v)$.
   
\begin{lemma}\label{lem:TamagawaVolume}
  Let $G$ be a $d$-dimensional semisimple connected linear algebraic group defined over $F$. 
  Fix a non-degenerate $F$-bilinear form $B: \LG \times \LG \to F$ on the Lie algebra.
  Then the Tamagawa measure on $G(\bbA)$ is given by
  \begin{equation*}
     \tau = \disc{F}^{-d/2} \prod_{v \in V} \vold{B}_v.
  \end{equation*}
\end{lemma}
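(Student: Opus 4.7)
The plan is to reduce the identity to the classical definition of the Tamagawa measure in terms of an $F$-rational invariant top-form, and then invoke the product formula. First I would fix an $F$-basis $e_1,\dots,e_d$ of $\LG$ and let $\omega$ be the unique left-invariant top-degree differential form on $G$ with $\omega_e=e_1^{*}\wedge\cdots\wedge e_d^{*}$ in the dual basis at the identity. This $\omega$ is $F$-rational, and since $G$ is semisimple, Weil's convergence theorem ensures that, with the local Haar measures $\mu_v$ prescribed in the lemma,
\[
   \tau \;=\; \disc{F}^{-d/2}\prod_{v\in V}|\omega|_v
\]
holds as a well-defined measure on $G(\bbA)$ with no convergence factors (cf.~\cite{Oesterle1984}); the factor $\disc{F}^{-d/2}$ appears because the $\mu_v$ differ from the self-dual Haar measures at the finite places dividing the different.

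Next I would compare the two invariant densities on $G(F_v)$ at each place. Set $b:=\det(B(e_i,e_j))\in F^\times$, which is non-zero by non-degeneracy of $B$. At the identity both $|\omega|_v$ and $\vold{B}_v$ are determined by their values on the single generator $e_1\wedge\cdots\wedge e_d$ of $\bigwedge^d\LG$: the form $\omega$ gives the value $1$, while the defining formula of the volume density gives $|b|_v^{1/2}$, where $|\cdot|_v$ denotes the normalized absolute value at $v$. Left-invariance of both measures then forces the global identity
\[
   \vold{B}_v \;=\; |b|_v^{1/2}\,|\omega|_v
   \qquad\text{on }G(F_v).
\]

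Substituting into the formula for $\tau$ and pulling out the local scaling factors yields
\[
   \tau \;=\; \disc{F}^{-d/2}\Bigl(\prod_{v\in V}|b|_v\Bigr)^{-1/2}\prod_{v\in V}\vold{B}_v,
\]
and the product formula $\prod_{v\in V}|b|_v=1$ for $b\in F^\times$ finishes the proof. The main obstacle is purely one of bookkeeping: one must verify carefully that the normalizations of the $\mu_v$ given in the lemma are precisely those producing the factor $\disc{F}^{-d/2}$ in the Tamagawa formula, and that the normalized absolute values used in the product formula agree with those entering both $\vold{B}_v$ and $|\omega|_v$, particularly at complex places where $|z|_v=z\bar z$ and the factor $2$ in $\mu_v([0,1]+[0,1]i)=2$ is exactly what is needed for consistency.
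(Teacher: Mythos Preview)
Your proposal is correct and follows essentially the same approach as the paper: fix an $F$-basis of $\LG$, let $\omega$ be the associated top form, compare $\vold{B}_v$ with $|\omega|_v$ via the scalar $|\det(B(e_i,e_j))|_v^{1/2}$, and then kill the product of these scalars with the product formula. Your write-up is in fact more explicit than the paper's, which omits the intermediate substitution step and the discussion of normalizations.
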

\begin{proof}
  Let $e_1, \dots, e_d$ be a basis of $\LG$ over $F$ and take
  the dual basis $\varepsilon_1,\dots, \varepsilon_d$ of $\Hom_F(\LG,F)$. We define a non-trivial
  form of highest degree $\omega = \varepsilon_1\wedge \dots \wedge \varepsilon_d$ on~$\LG$.
  By definition of the volume density we have 
  \begin{equation*}
    \vold{B}_v = |\det(B(e_i,e_j))|^{1/2}_v \dom{v}.
  \end{equation*}
  By the product formula we know that $|\det(B(e_i,e_j))|_v = 1$ for almost all places $v$ and further that $\prod_{v \in V} |\det(B(e_i,e_j))|_v = 1$.
\end{proof}

\subsubsection{The modulus factor}\label{par:modulusFactor}
 We focus on the case where the algebraic group has a smooth $\calO$-model.
 Let $G$ be a smooth group scheme defined over $\calO$. 
 For any commutative $\calO$-algebra $R$ we write $\LG_R := \Lie(G)(R)$ to denote the $R$-points of the Lie algebra of $G$.
 Let $B: \LG_F \times \LG_F \to F$ be a non-degenerate $F$-bilinear form. For every finite place $v \in V_f$ we
 define the \emph{modulus factor} $\mf(B)_v$ as follows:
 take an $\calO_v$-basis $e_1, \dots, e_n$ of the free $\calO_v$-module $\LG_v := \LG_{\calO_v}$, and define
  \begin{equation*}
      \mf(B)_v := \left|\det(B(e_i,e_j))\right|^{1/2}_v.
  \end{equation*}
 For almost all finite places $v \in V_f$ we have $\mf(B)_v = 1$. To see this, take an $F$-basis of $\LG_F$ and note that it is
 an $\calO_v$-basis of $\LG_{v}$ for almost all finite places $v$. 
 This allows us to define the \emph{global modulus factor} $\mf(B) := \prod_{v \in V_f} \mf(B)_v$.

\subsubsection{Congruence groups}\label{par:congruenceGroups}
In the adelic formulation of Harder's Gau\ss-Bonnet Theorem we focus on congruence groups which are given by local data.
Let $G$ be a smooth $\calO$-group scheme. For every finite place $v \in V_f$, let $\alpha_v \geq 1$ be a natural number and we assume that $\alpha_v = 1$
for almost all $v \in V_f$. Let $v$ be a finite place and let $\LP_v \subseteq \calO_v$ be the unique prime ideal in $\calO_v$. 
 We define $\pi_v$ to be the reduction morphism
 \begin{equation*}
     \pi_v: G(\calO_v) \to G(\calO_v/\LP^{\alpha_v}_v).
 \end{equation*}
 Further, we assume that we are given a subgroup $U_v$ of the finite group $G(\calO_v/\LP_v^{\alpha_v})$ for every place $v \in V_f$.
 For a place $v \in V_f$ the group $K_v(U) := \pi_v^{-1}(U_v)$ is an open compact subgroup of $G(\calO_v)$.
 If we additionally impose the assumption that $U_v = G(\calO_v/\LP_v^{\alpha_v})$ for almost all $v$,
 then the group
  \begin{equation*}
     K(U) := \prod_{v \in V_f} K_v(U)
  \end{equation*}
  is an open compact subgroup of the locally compact group $G(\bbA_f)$.
  We say that $K(U)$ is the congruence group associated with the \emph{local datum}\index{local datum}
  \begin{equation*}
    U = (U_v)_v = (U_v, \alpha_v)_v 
  \end{equation*}
 (usually the numbers $\alpha_v$ are considered to be implicitly a part of the datum $U$).
 
\subsubsection{The adelic Euler characteristic formula}
  Let $F$ be a totally real number field. Let $G$ be a smooth
  group scheme over $\calO$ such that $G \times_\calO F$ is a connected semisimple group.
  For every real place $v$ we choose a maximal compact subgroup $K_v \subseteq G(F_v)$.
  The real Lie algebra of $K_v$ will be denoted $\LK_v$.
  The product $K_\infty = \prod_{v \in V_\infty} K_v$ is a maximal compact subgroup of the associated real Lie group $G_\infty$. We denote the Lie algebra of $K_\infty$ by $\LK$. 
  
  Let $B : \LG_F \times \LG_F \to F$ be an $F$-bilinear form. We say that $B$ is \emph{nice} with respect to $K_\infty$
  if $B$ is non-degenerate and for every real place $v \in V_\infty$ the Cartan decomposition w.r.t.~$\LK_v$ is orthogonal with respect to $B$.
  A nice form induces a non-degenerate bilinear form $B: \LG_\infty \times \LG_\infty \to \bbR$ by defining the Lie subalgebras $\LG_v = \Lie(G(F_v))$ to 
  be orthogonal. Note that the form $B$ satisfies the requirements of Theorem \ref{thm:EulerPoincareMeasure}.
\begin{theorem}\label{thm:adelicFormula}
   Let $G$ be a smooth
  group scheme over $\calO$ such that $G \times_\calO F$ is a connected semisimple group of dimension $d$.
 We fix any nice form $B : \LG_F \times \LG_F \to F$.
 Furthermore, let ${K_f = K(U)}$ be a congruence subgroup of $G(\bbA_f)$ given by a local datum $(U, \alpha)$ such that
  $G(F)$ acts freely on $\bsl{K_\infty K_f}{G(\bbA)}$. 

  If $\dim(X) = 2p$ is even and $\rk(\LK_\bbC) = \rk(\LG_{\infty,\bbC})$, then
  the Euler characteristic of  $\bsl{K_\infty K_f}{G(\bbA)}/G(F)$ is given by
  \begin{align*}
     \chi(\bsl{K_\infty &K_f}{G(\bbA)}/G(F)) \\ 
 &=(-1)^p \disc{F}^{d/2} \frac{\left|W(\LG_{\infty,\bbC})\right| \:\tau(G)}{\left|\pi_0(G_\infty)\right| \left|W(\LK_\bbC\right)|}
   \vol_B(G_u)^{-1} \mf(B)^{-1} \prod_{v \in V_f} \frac{\N(\LP_v)^{d\alpha_v}}{|U_v|}  .
  \end{align*}
 Here $\tau(G)$ is the Tamagawa number of $G$, $\N(\LP_v)$ denotes the cardinality of the residue class field $\calO_v/\LP_v$, and
 $G_u$ denotes the compact dual group of $G^0_\infty$
 (the remaining notation is as in Theorem \ref{thm:EulerPoincareMeasure}).
\end{theorem}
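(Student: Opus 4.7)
The plan is to chain together three ingredients: (a) the double coset decomposition \eqref{eq:GinvHomeo} to reduce to a finite sum of classical Euler characteristics, (b) Harder's Gau\ss-Bonnet formula of Theorem \ref{thm:EulerPoincareMeasure} on each piece, and (c) a careful volume book-keeping relating the adelic measure given by Lemma \ref{lem:TamagawaVolume} to the product of the archimedean invariant measures and the volumes of the local congruence groups.

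First, by \eqref{eq:GinvHomeo} the quotient $\bsl{K_\infty K_f}{G(\bbA)}/G(F)$ is homeomorphic to the disjoint union $\bigsqcup_{i=1}^m X/\Gamma_i$ with $\Gamma_i = G(F) \cap x_i^{-1} K_f x_i$, and by the freeness assumption every $\Gamma_i$ is torsion-free. Hence
\begin{equation*}
   \chi(\bsl{K_\infty K_f}{G(\bbA)}/G(F)) \;=\; \sum_{i=1}^m \chi(\Gamma_i).
\end{equation*}
Under the hypothesis on ranks and parity of $\dim X$, Theorem \ref{thm:EulerPoincareMeasure} applied with the nice form $B$ gives $\chi(\Gamma_i) = C\cdot \vol_{\vold{B}_\infty}(G_\infty/\Gamma_i)$ with the global constant
\begin{equation*}
 C \;=\; \frac{(-1)^p \left|W(\LG_{\infty,\bbC})\right|}{\left|\pi_0(G_\infty)\right|\,\left|W(\LK_{\infty,\bbC})\right|}\,\vol_B(G_u)^{-1}.
\end{equation*}

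Next, I would identify $\sum_i \vol_{\vold{B}_\infty}(G_\infty/\Gamma_i)$ with a Tamagawa volume. Picking a fundamental domain $D_i \subset G_\infty$ for the right $\Gamma_i$-action, the set $\bigsqcup_i D_i K_f x_i$ is a fundamental domain for $G(F)$ acting on $G(\bbA)$. Using Lemma \ref{lem:TamagawaVolume} and right-invariance of the local factors one obtains
\begin{equation*}
 \tau(G) \;=\; \disc{F}^{-d/2} \Bigl(\prod_{v\in V_f} \vold{B}_v\Bigr)(K_f) \cdot \sum_{i=1}^m \vol_{\vold{B}_\infty}(G_\infty/\Gamma_i).
\end{equation*}
The key step is then the local volume computation at each finite place. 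Using smoothness of $G$ over $\calO_v$, the reduction maps $G(\calO_v) \to G(\calO_v/\LP_v^{\alpha_v})$ are surjective with kernel of volume $\N(\LP_v)^{-\alpha_v d} \cdot \mf(B)_v$ (via the exponential/logarithm identification of a deep congruence subgroup with a lattice in the Lie algebra, whose volume for $\vold{B}_v$ is precisely the modulus factor $\mf(B)_v$); moreover the fibres of $G(\calO_v/\LP_v^{\alpha_v}) \to G(\calO_v/\LP_v)$ again have size $\N(\LP_v)^{(\alpha_v-1)d}$. A short calculation yields
\begin{equation*}
  \vold{B}_v(K_v(U)) \;=\; \mf(B)_v \cdot \frac{|U_v|}{\N(\LP_v)^{d\alpha_v}},
\end{equation*}
and taking the product over $v \in V_f$ gives $(\prod_v \vold{B}_v)(K_f) = \mf(B) \prod_v \tfrac{|U_v|}{\N(\LP_v)^{d\alpha_v}}$.

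Combining these three ingredients, solving for $\sum_i \vol_{\vold{B}_\infty}(G_\infty/\Gamma_i)$ and multiplying by $C$ yields exactly the claimed formula. The main technical obstacle is step (c): producing the correct local volume formula in which the modulus factor $\mf(B)_v$ appears, which requires smoothness of the group scheme in order to both identify a deep congruence subgroup with a lattice in $\LG_v$ and control the index $[G(\calO_v):K_v(U)]$ via the finite quotients $G(\calO_v/\LP_v^{\alpha_v})$.
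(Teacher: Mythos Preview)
Your argument is essentially identical to the paper's: decompose via \eqref{eq:GinvHomeo}, apply the Euler--Poincar\'e measure of Theorem~\ref{thm:EulerPoincareMeasure} on each piece, assemble the fundamental domains $D_iK_fx_i$ into a global one, and invoke Lemma~\ref{lem:TamagawaVolume} to convert the archimedean covolume sum into $\disc{F}^{d/2}\tau(G)\,\vol_B(K_f)^{-1}$. The only difference is cosmetic: for the local identity $\vold{B}_v(K_v(U)) = \mf(B)_v\,|U_v|\,\N(\LP_v)^{-d\alpha_v}$ the paper simply cites Weil's theorem (in the form of Oesterl\'e \cite[Prop.~2.5]{Oesterle1984} or Batyrev \cite[Thm.~2.5]{Batyrev1999}), whereas you sketch its proof via the exponential identification of a sufficiently deep congruence kernel with $\LP_v^{m}\LG_{\calO_v}$ together with the smoothness-based count of the fibres of $G(\calO_v/\LP_v^{m})\to G(\calO_v/\LP_v^{m'})$ --- which is exactly the content of that theorem.
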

\begin{proof}
 Let $x_1, \dots, x_m \in G(\bbA_f)$ be representatives of the finitely many double
 cosets in $\bsl{G_\infty K_f}{G(\bbA)}/G(F)$.
 We consider the torsion-free arithmetic groups $\Gamma_i$ defined as $\Gamma_i := G(F) \cap x_i^{-1}K_fx_i$.
 Let $\calF_i$ be a Borel measurable fundamental domain for the right action of $\Gamma_i$ on $G_\infty$.
 Here we mean a fundamental domain in the strict sense, i.e.~$\calF_i$ is a set of representatives for $G_\infty/\Gamma_i$
 (for the existence of measurable fundamental domains see Bourbaki -- Int\'egration, VII.§2 Ex.~12 \cite{BourbakiInt}).
 The set $\calF$ defined as the union $\bigsqcup_{i=1}^m \calF_iK_fx_i \subseteq G(\bbA)$ is a Borel measurable
 fundamental domain for the right action of $G(F)$ on $G(\bbA)$.
 We write $\vold{B}_\infty = \prod_{v \in V_\infty} \vold{B}_v$ and $\vold{B}_f := \prod_{v\in V_f} \vold{B}_v$.
 Due to Theorem \ref{thm:EulerPoincareMeasure} we have
 \begin{equation*}
      \chi(\bsl{K_\infty K_f}{G(\bbA)}/G(F)) = \sum_{i=1}^m \chi(X/\Gamma_i) = \lambda \sum_{i = 1}^m \int_{\calF_i} \vold{B}_\infty
 \end{equation*}
  where $\lambda = (-1)^p \frac{\left|W(\LG_{\infty,\bbC})\right|}{\left|\pi_0(G_\infty)\right| \left|W(\LK_\bbC\right)|}
   \vol_B(G_u)^{-1}$.
  By multiplication with the volume of $K_f$, which is simply $\vol_B(K_f) =\int_{K_f} \vold{B}_f$, and by Lemma \ref{lem:TamagawaVolume}, we obtain
   \begin{equation*}
      \sum_{i = 1}^m \int_{\calF_i} \vold{B}_\infty \vol_B(K_f) = \int_{\calF} \prod_{v \in V}\vold{B}_v = \disc{F}^{d/2}\int_{\calF} \tau = \disc{F}^{d/2}\tau(G).
 \end{equation*}
 This means, we have 
  \begin{equation*}
      \chi(\bsl{K_\infty K_f}{G(\bbA)}/G(F)) = \lambda \disc{F}^{d/2}\tau(G)  \vol_B(K_f)^{-1}.
 \end{equation*}
 Finally we are left with the task of determining $\vol_B(K_f)$.
 We shall exploit that $K_f$ is given by the local datum $(U, \alpha)$.
 Since $\vol_B(K_f) = \prod_{v \in V_f} \vol_B(K_v(U))$ and the scheme $G$ is smooth, we can apply a Theorem of Weil (for a modern formulation see Prop.~2.5 in \cite{Oesterle1984} or Thm.~2.5 in \cite{Batyrev1999}) 
 in every finite place to deduce
 \begin{equation*}
   \vol_B(K_f) = \prod_{v \in V_f} \mf(B)_v \frac{|U_v|}{\N(\LP_v)^{d\alpha_v}}.
 \end{equation*}
 Now the claim follows readily.
\end{proof}

\section{Rohlfs' method}\label{sec:RohlfsMethod}
In this section we give a short summary of Rohlfs' method for the computation of Lefschetz numbers. 

Let $F$ be an algebraic number field and let $G$ be a linear algebraic group defined over~$F$. We assume that $G$ has strong approximation.
For example, unipotent groups and $F$-simple, simply connected groups with a non-compact associated Lie group
have strong approximation (see p.~427 in \cite{PlatonovRapinchuk1994}).
Choose a maximal compact subgroup $K_{\infty} \subseteq G_\infty$ 
and set $X := \bsl{K_{\infty}}{G_\infty}$ .
Furthermore, let $K_f \subseteq G(\bbA_f)$ be an open compact subgroup and let $\Gamma := G(F) \cap K_f$ be the arithmetic group 
defined by this open compact subgroup. There is a homeomorphism
\begin{equation*}
      X/\Gamma \: \stackrel{\simeq}{\longrightarrow}\: \bsl{K_{\infty}K_f}{G(\bbA)}/G(F).
\end{equation*}
To see this, consider with the inclusion $G_\infty \to G(\bbA)$ and use strong approximation to
observe that it factors to such a homeomorphism.
Recall that $\Gamma$ is torsion-free if and only in $G(F)$ acts freely on $\bsl{K_{\infty}K_f}{G(\bbA)}$.

Let $\tau$ be an automorphism of finite order of $G$. We can choose $K_\infty$ such that it is $\tau$-stable.
We further assume that $K_f \subset G(\bbA_f)$ is a $\tau$-stable open compact subgroup.
We obtain an action of $\tau$ on the double coset space 
\begin{equation*}
S(K_f) := \bsl{K_{\infty}K_f}{G(\bbA)}/G(F).
\end{equation*}
We describe the set $S(K_f)^{\tau}$ of $\tau$-fixed points following Rohlfs \cite{Rohlfs1990} under 
the assumption that $G(F)$ acts freely on $\bsl{K_{\infty}K_f}{G(\bbA)}$.

Consider the finite set $\calH^1(\tau)$ defined as the fibred product
\begin{equation*}
    \calH^1(\tau) := H^1(\tau,K_\infty K_f) \mathop{\times}_{H^1(\tau,G(\bbA))} H^1(\tau, G(F)).
\end{equation*}
of non-abelian cohomology sets. 
Here we usually write $\tau$ instead of the finite group $\langle\tau\rangle$ generated by $\tau$.
We consider $\calH^1(\tau)$ as a topological space with the discrete topology.
Rohlfs constructed (cf.~3.5 in \cite{Rohlfs1990}) a surjective and continuous map
\begin{equation*}
  \vartheta: S(K_f)^{\tau} \to \calH^1(\tau).
\end{equation*}
In particular the fibres are open and closed in $S(K_f)^{\tau}$ and we get a decomposition
\begin{equation}\label{eqAdelicDecomposition}
    S(K_f)^{\tau} \:=\: \bigsqcup_{\eta \in \calH^1(\tau)} \vartheta^{-1}(\eta).
\end{equation}

Let $\gamma \in Z^1(\tau, G(F))$ be a cocycle. The $\gamma$-twisted $\tau$-action
on $G$, defined by $\up{\tau|\gamma}(x) = \gamma_\tau \up{\tau}x \gamma_\tau^{-1}$, is an automorphism defined over $F$ and the group of fixed points is a linear algebraic group which will be denoted
$G(\gamma)$. Similarly, given a cocycle $k \in Z^1(\tau, K_\infty K_f)$ we define the $k$-twisted action 
of $\tau$ on $K_\infty K_f$ by $\up{\tau|k}g := k_\tau \up{\tau}g k_\tau^{-1}$. The corresponding group of fixed points under this action
will be written $(K_\infty K_f)^{\tau|k}$.
Rohlfs obtained the following description of the fibres of $\vartheta$.

\begin{lemma}[Rohlfs, see 3.5 in \cite{Rohlfs1990}]\label{lem:StructureFPComponents}
  Let $K_f \subset G(\bbA_f)$ be a $\tau$-stable open compact subgroup such that $G(F)$ acts freely on 
  $\bsl{K_\infty K_f}{G(\bbA)}$.

  Let $\eta \in \calH^1(\tau)$ be a class represented by a pair of cocycles $(k,\gamma)$ with
  $(k_s)_s$ in $Z^1(\tau, K_\infty K_f)$ and $(\gamma_s)_s \in Z^1(\tau, G(F))$.
  Take $a \in G(\bbA)$ such that $\up{s}a = k^{-1}_s a \gamma_s$ for all $s \in \langle\tau\rangle$.
  There is a homeomorphism 
\begin{equation*}
     \bsl{a^{-1} (K_\infty K_f)^{\tau|k}a}{G(\gamma)(\bbA)}/G(\gamma)(F) \: \stackrel{\simeq}{\longrightarrow} \:  \vartheta^{-1}(\eta).
\end{equation*}
\end{lemma}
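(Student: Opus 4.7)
The plan is to realize the homeomorphism as the map induced by right translation with $a$. I would set
\begin{equation*}
\Phi \colon G(\gamma)(\bbA) \longrightarrow S(K_f), \qquad \Phi(g) := [ag],
\end{equation*}
and show that $\Phi$ factors through the claimed double coset space and induces a bijection onto $\vartheta^{-1}(\eta)$. Recall Rohlfs' recipe for $\vartheta$: for a lift $x$ of a $\tau$-fixed class, any pair $(k', \gamma')$ with $\up{\tau}x = (k')^{-1} x \gamma'$ represents $\vartheta([x])$. For $g \in G(\gamma)(\bbA)$ one has $\up{\tau}g = \gamma^{-1} g \gamma$, and combined with $\up{\tau}a = k^{-1} a \gamma$ this yields $\up{\tau}(ag) = k^{-1} (ag) \gamma$, so $\Phi(g) \in \vartheta^{-1}(\eta)$. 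A short computation using $\up{\tau}a = k^{-1}a\gamma$ shows that $a^{-1}(K_\infty K_f)^{\tau|k} a \subseteq G(\gamma)(\bbA)$, and for $h$ in this subgroup and $v \in G(\gamma)(F)$ one has $\Phi(hgv) = [aha^{-1}\cdot ag \cdot v] = [ag] = \Phi(g)$, so $\Phi$ descends as claimed.

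For surjectivity, I would pick $[x] \in \vartheta^{-1}(\eta)$ with lift $x$ and associated cocycles $(k_x, \gamma_x)$ satisfying $\up{\tau}x = k_x^{-1} x \gamma_x$. Since the pair is componentwise cohomologous to $(k, \gamma)$, there exist $u \in K_\infty K_f$ and $v \in G(F)$ with $k_x = u k \up{\tau}u^{-1}$ and $\gamma_x = v \gamma \up{\tau}v^{-1}$. The modified lift $y := u^{-1} x v$ represents the same class in $S(K_f)$, and a direct computation gives $\up{\tau}y = k^{-1} y \gamma$. Comparing with $\up{\tau}a = k^{-1} a \gamma$ forces $g := a^{-1} y \in G(\gamma)(\bbA)$, whence $\Phi(g) = [y] = [x]$.

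For injectivity, assume $ag_1 = k' ag_2 \gamma'$ with $k' \in K_\infty K_f$ and $\gamma' \in G(F)$. Applying $\tau$, using $\up{\tau}(ag_i) = k^{-1}(ag_i) \gamma$ for $i=1,2$, and substituting the defining relation yields
\begin{equation*}
\bigl((k')^{-1} k\, \up{\tau}k'\, k^{-1}\bigr) \cdot ag_2 \;=\; ag_2 \cdot \bigl(\gamma\, \up{\tau}\gamma'\, \gamma^{-1} (\gamma')^{-1}\bigr).
\end{equation*}
Freeness of the right $G(F)$-action on $K_\infty K_f \backslash G(\bbA)$ forces the $G(F)$-factor to be $1$, and then the $K_\infty K_f$-factor is $1$ since left translation on $G(\bbA)$ is free. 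These two conditions translate to $\gamma' \in G(\gamma)(F)$ and $k' \in (K_\infty K_f)^{\tau|k}$, so $g_1$ and $g_2$ represent the same double coset. Continuity of $\Phi$ is clear, and openness of $K_\infty K_f$ in $G(\bbA)$ implies the induced bijection is open, producing the homeomorphism. The main obstacle will be the injectivity step: invoking freeness in the right form requires first recasting the $\tau$-transformed equation into precisely the displayed shape, after which the twisted fixed-point conditions on $k'$ and $\gamma'$ fall out automatically.
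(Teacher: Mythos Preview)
The paper does not give its own proof of this lemma; it is stated with a direct citation to Rohlfs \cite{Rohlfs1990}, 3.5. Your proposal is a correct unwinding of Rohlfs' construction, and the strategy---translate by $a$, check that $\Phi$ lands in $\vartheta^{-1}(\eta)$, factor through the double quotient, then prove bijectivity by adjusting lifts and invoking freeness---is exactly the standard one.

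One minor computational slip: in your injectivity step, the displayed identity should read
\begin{equation*}
\bigl((k')^{-1} k\, \up{\tau}k'\, k^{-1}\bigr) \cdot ag_2 \;=\; ag_2 \cdot \bigl(\gamma'\,\gamma\, \up{\tau}(\gamma')^{-1}\, \gamma^{-1}\bigr),
\end{equation*}
i.e.\ the $G(F)$-factor on the right is the inverse of what you wrote. This is harmless, since freeness forces that factor to be $1$ either way, and both the condition $\gamma\,\up{\tau}\gamma'\,\gamma^{-1}(\gamma')^{-1}=1$ and its inverse are equivalent to $\gamma' \in G(\gamma)(F)$; likewise the $K_\infty K_f$-factor being $1$ gives $k' \in (K_\infty K_f)^{\tau|k}$ as you claim. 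Also note that you tacitly work with the generator $\tau$ rather than all $s \in \langle\tau\rangle$; this is fine because the cocycle relations propagate the identity $\up{\tau}a = k_\tau^{-1}a\gamma_\tau$ to all powers, but it is worth saying so once.
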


Combined with the next theorem this yields a method for the computation of Lefschetz numbers which we simply call \emph{Rohlfs' method}.
\begin{definition}\label{def:compatibleAction}
 Let $\rho: G \to \GL(W)$ be a rational representation defined over the algebraic closure $\alg{F}$ of $F$. Here $W$ is a
 finite dimensional $\alg{F}$-vector space.
 Given an action of the finite group $\langle \tau \rangle$ on $W$.
 We say that this action is \emph{compatible} with $\rho$, if 
\begin{equation*}
   \up{s}(\rho(g)v) \: = \: \rho(\up{s}g) \up{s}v
\end{equation*}
for all $v \in V$, $s \in \langle\tau\rangle$ and $g \in G(\alg{F})$.
In other words $W$ is a $(G(\alg{F})\rtimes\langle\tau\rangle)$-module.
\end{definition}
Let $\rho: G \to \GL(W)$ be a rational representation and let $\Gamma \subseteq G(F)$ be a torsion-free arithmetic subgroup.
If $W$ is equipped with a compatible $\tau$-action then
we define the \emph{Lefschetz number} of $\tau$ with values in $W$ as
\begin{equation*}
   \calL(\tau, \Gamma, W) := \sum_{i=0}^\infty (-1)^i \Tr\bigl(\tau^i: H^i(\Gamma, W) \to H^i(\Gamma, W)\bigr).
\end{equation*}
Since torsion-free arithmetic groups are of type (FL), this is a finite sum.

Given a cocycle $b=(b_s)_s \in H^1(\tau, G(\alg{F}))$ one can define the $b$-twisted $\tau$-action on $W$ by
\begin{equation*}
    \up{\tau|b}w = b_\tau \up{\tau}w
\end{equation*}
for all $w \in W$. We write $W(b)$ to denote the space $W$ with the $b$-twisted $\tau$-action.
We need the following slight paraphrase of a theorem of Rohlfs.
\begin{theorem}[cf.~Rohlfs \cite{Rohlfs1990}]\label{thmLefschetznumberRationalRepAdelic}
 Let $G$ be an algebraic $F$-group with strong approximation and let $\tau$ be an automorphism of finite order defined over $F$.
 Let $K_f \subset G(\bbA_f)$ be a $\tau$-stable open compact subgroup 
such that $\Gamma := G(F) \cap K_f$ is torsion-free. Let $\rho: G \to \GL(W)$ be a rational representation defined over $\alg{F}$ with
a compatible $\tau$-action. Then we have
\begin{equation*}
     \calL(\tau,\Gamma,W)  \: = \: \sum_{\eta \in \calH^1(\tau)} \chi\bigl(\vartheta^{-1}(\eta)\bigr) \Tr(\tau | W(b_\eta)),
\end{equation*}
where $b_\eta \in G(F)$ is any representative of the $H^1(\tau, G(F))$ component of $\eta$.
\end{theorem}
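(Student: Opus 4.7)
The plan is to deduce this statement from Rohlfs' original Lefschetz number theorem in \cite{Rohlfs1990}, of which the assertion is a mild reformulation. First, I would invoke strong approximation to identify $X/\Gamma$ with the adelic double coset space $S(K_f) = \bsl{K_\infty K_f}{G(\bbA)}/G(F)$, so that $\calL(\tau,\Gamma,W)$ is realised as the Lefschetz number of the finite-order self-homeomorphism induced by $\tau$ on $S(K_f)$ with coefficients in the local system $\widetilde W$ associated with $W$ via the $\Gamma$-action. The compatibility of the $\tau$-action with $\rho$ in the sense of Definition \ref{def:compatibleAction} is precisely what is needed for $\widetilde W$ to become $\tau$-equivariant, so that $\tau$ acts on $H^*(S(K_f), \widetilde W) \cong H^*(\Gamma, W)$.

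Next I would appeal to the topological Lefschetz fixed-point principle applied to $\tau$ on $S(K_f)$. Since $\Gamma$ is torsion-free and of type (FL), the cohomology is finite-dimensional and the formula takes the shape
\begin{equation*}
  \calL(\tau,\Gamma,W) \: = \: \sum_{C} \chi(C) \, \Tr\bigl(\tau \,\big|\, \widetilde W_{x_C}\bigr),
\end{equation*}
where the sum runs over the connected components $C$ of the fixed point set $S(K_f)^\tau$ and $\widetilde W_{x_C}$ is the stalk of $\widetilde W$ at a point $x_C \in C$, viewed as a $\tau$-module. Establishing this principle in the possibly non-compact, finite volume setting is the genuinely hard input; it is exactly the content of Rohlfs' Theorem in \cite{Rohlfs1990}, so at this point I would simply quote his result. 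The only delicate verification is that Rohlfs' argument, formulated for representations defined over $F$, is insensitive to the passage to $\alg{F}$-rational representations with a compatible $\tau$-action: the fixed-point localisation only sees the $\tau$-equivariant structure on $W$ and the trace of $\tau$ on the fibre.

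Finally, I would match this sum with the one in the statement using the Rohlfs decomposition \eqref{eqAdelicDecomposition}, $S(K_f)^\tau = \bigsqcup_{\eta \in \calH^1(\tau)} \vartheta^{-1}(\eta)$. Given $\eta$ represented by a pair of cocycles $(k,\gamma)$, Lemma \ref{lem:StructureFPComponents} realises $\vartheta^{-1}(\eta)$ as a locally symmetric space attached to the twisted fixed-point group $G(\gamma)$. Unwinding the identifications, the stalk of $\widetilde W$ at any point of $\vartheta^{-1}(\eta)$ is canonically $W$ endowed with the $\gamma$-twisted $\tau$-action, which is $W(b_\eta)$ with $b_\eta = \gamma_\tau$. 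Substituting $\chi(\vartheta^{-1}(\eta)) \Tr(\tau \mid W(b_\eta))$ for the contribution of the component and summing over $\eta \in \calH^1(\tau)$ yields the asserted identity. The main obstacle, as noted, is the Lefschetz fixed-point formula for non-compact finite-volume quotients with twisted coefficients; everything else is bookkeeping with Rohlfs' decomposition.
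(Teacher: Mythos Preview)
Your proposal is correct and follows essentially the same approach as the paper: combine Rohlfs' decomposition \eqref{eqAdelicDecomposition} of the fixed point set with a Lefschetz fixed point principle, then identify the local contribution over $\vartheta^{-1}(\eta)$ with $\Tr(\tau\mid W(b_\eta))$. The only difference is that the paper cites \cite{Kionke2012} rather than \cite{Rohlfs1990} for the fixed point principle itself, presumably because the precise version needed (equivariant sheaf coefficients on a possibly non-compact quotient) is made explicit there; your proposal already flags this as the genuinely hard input.
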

\begin{proof}
  This follows from Rohlfs' decomposition (see equation \eqref{eqAdelicDecomposition}) 
  and a suitable Lefschetz fixed point principle, for instance \cite{Kionke2012}.
  %%% TODO: More precise?
\end{proof}

\section{Proof of the main theorem}\label{sec:ProofMain}

\subsection{Introduction}
In this section
we compute the Lefschetz number of an involution of symplectic type on principal congruence subgroups
of inner forms of the special linear group.
For this purpose we combine the tools developed in the previous sections.

One should keep in mind that the central result is the adelic Lefschetz number formula in Theorem \ref{thmLefschetznumberRationalRepAdelic}.
Whenever we want to apply this theorem, there are two important steps to do. 
First step:
understand the involved first non-abelian Galois cohomology sets.
Second step: compute the Euler characteristics
of the fixed point components.
In the second step we use the adelic formula in Theorem \ref{thm:adelicFormula} obtained from Harder's Gau\ss-Bonnet Theorem.

First we introduce some notation,
then we begin to determine various non-abelian cohomology sets.
In the third subsection we describe the fixed point groups and we compute their Euler characteristics.
Finally we prove the main theorem.

As before $F$ denotes an algebraic number field and $\calO$ denotes its ring of integers.
Let $D$ be a quaternion algebra over $F$, i.e.~a central simple $F$-algebra of dimension four.
 Note that, even though we use the symbol $D$, the quaternion algebra $D$ is in general not assumed to be a division algebra.
 Given a place $v$, we define $D_v := D \otimes_F F_v$. If $D_v$ is isomorphic to $M_2(F_v)$, we say that $D$ \emph{splits} at the place $v$.
 Otherwise $D_v$ is a division algebra and we say that $D$ is \emph{ramified} at $v$.
 Let $\Ram(D) \subset V$ be the finite set places where $D$ ramifies, and let $\Ram_f(D)$ (resp.~$\Ram_\infty(D)$) denote the subset 
 of finite (resp.~archimedean) places.

\begin{definition}\label{def:redDiscr}
  The \emph{signed reduced discriminant} $\Drd{D}$ of $D$ is the integer
  \begin{equation*}
     \Drd{D} := (-1)^r\prod_{\LP \in \Ram_f(D)} \N(\LP).
  \end{equation*}
  where $r = |\Ram_\infty(D)|$.
\end{definition}

\subsubsection{The canonical involution}\label{par:involution}
 On the quaternion algebra $D$ we have the \emph{canonical involution}, sometimes called \emph{conjugation},
 \begin{equation*}
    \tau_c: D \to D \:\text{ denoted by }\: x \mapsto \conj{x}.
 \end{equation*}
  Given a description as $D = Q(a,b | F)$ with $a,b \in F^\times$, i.e.~there is a basis $1, i, j, ij$ of $D$ with $i^2 = a$, 
  $j^2 = b$ and $ij = -ji$, then the conjugation is defined by
  \begin{equation*}
      \tau_c: x_0 + x_1 i + x_2 j + x_3 ij \mapsto x_0 - x_1 i - x_2 j - x_3 ij.
  \end{equation*}
  Note that the conjugation is $F$-linear, i.e.~it is an involution of the first kind on $D$.
  Moreover, $\tau_c$ is an involution of symplectic type. 

  The elements fixed by conjugation are precisely the elements of $F$.
  The conjugation is related to the reduced norm and trace of $D$ by
  \begin{align*}
     \trd_D(x) &= x + \conj{x}, \\
     \nrd_D(x) &= x \conj{x} = \conj{x} x
  \end{align*}
  for all $x$ in $D$.

\subsubsection{Orders}
  Let $\Lambda_D$ be an $\calO$-order in $D$. 
  Then $\Lambda_D$ is $\tau_c$-stable, as can be seen as follows:
  Let $x \in \Lambda_D$, then
  \begin{equation*}
     \conj{x} = x + \conj{x} - x = \trd_D(x) - x.
  \end{equation*}
  Recall that $\trd_D(x) \in \calO$ because $x$ is integral. Since $\calO \subseteq \Lambda_D$, we obtain $\conj{x} \in \Lambda_D$.
  Moreover,  it follows directly from the definitions that $\Lambda_D$ is smooth
  if and only if $\Lambda_D$ is $\tau_c$-smooth (see Def.~\ref{def:smooth} and Def.~\ref{def:tausmooth}).

  We will assume from now on that $\Lambda_D$ is a maximal $\calO$-order in $D$. In particular,
  it is a smooth and $\tau_c$-smooth order (see Prop.~\ref{prop:maxImpliesSmooth}).
  
  Let $n$ be a positive integer. Consider the central simple $F$-algebra
  \begin{equation*}
      A := M_n(D)
  \end{equation*}
  of $n \times n$-matrices with entries in the quaternion algebra $D$. 
  The canonical involution on $D$ induces an involution $\tau$ on $A$ defined by
  \begin{equation*}
     \tau(x) := \up{\tau}x := \conj{x}^T,
  \end{equation*}
  i.e.~conjugate every entry in the matrix $x$ and then transpose the matrix. 
  It is easily checked that this defines an involution of symplectic type on $A$ (cf.~(2.23) in \cite{BOInv}).
  
\begin{lemma} \label{lem:orderIsNice}
  Let $\Lambda_D \subseteq D$ be a maximal $\calO$-order.
  The $\calO$-order $\Lambda = M_n(\Lambda_D)$ in $A$ is maximal, $\tau$-stable, smooth and $\tau$-smooth.
\end{lemma}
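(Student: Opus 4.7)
The plan is to verify the four assertions in the listed order, since each relies on the hypotheses on $\Lambda_D$ established earlier.

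First I would handle maximality and $\tau$-stability. Maximality of $M_n(\Lambda_D)$ in $M_n(D)$ is the standard fact that matrix algebras over maximal orders are maximal (see e.g.\ (8.7) and the discussion of (17.3) in \cite{Reiner2003}; in our setting this is also recorded at the beginning of the section). For $\tau$-stability, given $x = (x_{ij}) \in \Lambda$ we have $\tau(x)_{ij} = \conj{x_{ji}}$; since $\Lambda_D$ is $\tau_c$-stable (noted in the preceding paragraph) each entry lies in $\Lambda_D$, so $\tau(x) \in \Lambda$.

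Next I would deal with smoothness. The reduced trace on $M_n(D)$ satisfies $\trd_{A/F}(x) = \sum_{i=1}^n \trd_D(x_{ii})$ (as recalled in the proof of Proposition~\ref{prop:maxImpliesSmooth}). By Proposition~\ref{prop:maxImpliesSmooth}, the maximal order $\Lambda_D$ is smooth, so there is $d \in \Lambda_D$ with $\trd_D(d) = 1$. Putting $d$ in the $(1,1)$-slot and zeros elsewhere produces an element of $\Lambda$ with reduced trace $1$, so the map $\trd_{\Lambda/\calO}\colon \Lambda \to \calO$ is surjective and $\Lambda$ is smooth by Definition~\ref{def:smooth}.

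The main step, and the only one requiring a small argument, is $\tau$-smoothness. A symmetric element $y \in \Sym(\Lambda,\tau)$ is characterised by $y_{ij} = \conj{y_{ji}}$ for all $i,j$; in particular each diagonal entry satisfies $y_{ii} = \conj{y_{ii}}$, hence lies in $F \cap \Lambda_D = \calO$ (using that the fixed elements of $\tau_c$ on $D$ are exactly $F$, see \ref{par:involution}). I would construct a preimage $x \in \Lambda$ under the map $s(x) = x + \tau(x)$ entry by entry: for the diagonal, use $\tau_c$-smoothness of $\Lambda_D$ (again Proposition~\ref{prop:maxImpliesSmooth}, since smooth and $\tau_c$-smooth coincide for $\Lambda_D$, as observed just before the lemma) to pick $x_{ii} \in \Lambda_D$ with $x_{ii} + \conj{x_{ii}} = y_{ii}$; for $i<j$ set $x_{ij} := y_{ij}$ and $x_{ji} := 0$. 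A direct check gives $x_{ij} + \conj{x_{ji}} = y_{ij}$ and $x_{ji} + \conj{x_{ij}} = \conj{y_{ij}} = y_{ji}$, so $s(x) = y$.

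No single step is a genuine obstacle; the mild subtlety is the $\tau$-smoothness reduction, where one must recognise that the hermitian-type constraint on the diagonal forces values in $\calO$ and that the off-diagonal part is handled by a trivial triangular splitting, so that only $\tau_c$-smoothness of $\Lambda_D$ is actually needed.
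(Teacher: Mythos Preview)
Your proposal is correct and follows essentially the same route as the paper. The only cosmetic difference is in the smoothness step: the paper deduces smoothness of $\Lambda$ directly from its maximality via Proposition~\ref{prop:maxImpliesSmooth}, whereas you instead invoke that proposition for $\Lambda_D$ and then build an element of reduced trace one by hand; the $\tau$-smoothness argument (diagonals in $\calO$, triangular splitting off the diagonal) is identical.
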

\begin{proof}
   Since $\Lambda_D$ is stable under conjugation, it is obvious that $\Lambda$ is $\tau$-stable.
   Moreover, it follows from (21.6) in \cite{Reiner2003} that $\Lambda$ is a maximal $\calO$-order. 
   In turn Proposition \ref{prop:maxImpliesSmooth} shows that $\Lambda$ is also a smooth order.

   Finally we need to check that $\Lambda$ is $\tau$-smooth. Let $x \in \Sym(\Lambda, \tau)$ be an element which is
   fixed by $\tau$. This means that $x = (x_{ij})$ satisfies 
   \begin{align*}
      x_{ij} &= \conj{x_{ji}} \: \text{ for all }\: i \neq j, \:\text{ and}\\
      x_{ii} &\in \calO .
   \end{align*}
   The order $\Lambda_D$ is smooth, therefore there is, for every $i=1, \dots, n$,
   an element $z_i \in \Lambda_D$ with $\trd_D(z_i) = z_i + \conj{z_i} = x_{ii}$.
   Now we define the upper triangular element $y \in \Lambda$ by
   \begin{equation*}
      y_{ij} := \begin{cases}
                   0  \quad &\text{ if $i > j$}\\
                   x_{ij}   & \text{ if $i < j$}\\
                   z_i      & \text{ if $i=j$},
               \end{cases}
   \end{equation*}
   and it is easy to see that $y + \up{\tau}y = x$. 
   We deduce that $\Lambda$ is $\tau$-smooth.
\end{proof}

\subsubsection{Setting and Assumptions} \label{par:defCongruenceGroups}
We define $G:= \SL_\Lambda$ to be the special linear group over the order $\Lambda$ (see Def.~\ref{def:SL}).
From the previous lemma and Cor.~\ref{cor:smoothSL} we deduce that $G$ is a smooth group scheme over $\calO$.
Moreover, the involution $\tau$ induces an 
automorphism $\tau^*$ of $\GL_\Lambda$ where $\tau^* = \inv \circ \tau$ (cf.~\ref{sec:FixedPointGroups}).
Clearly $\tau^*$ has order (at most) two and it restricts to an automorphism of $G = \SL_\Lambda$.
 
The real Lie group $G_\infty$ associated with $G$ is
\begin{equation*}
   G_\infty := \prod_{v \in V_\infty} G(F_v) \:\cong\: \SL_{2n}(\bbR)^s \times \SL_n(\bbH)^r \times \SL_{2n}(\bbC)^t. 
\end{equation*}
Here $s$ denotes the number real places of $F$ where $D$ splits, $r$ is the number of real places where $D$ ramifies,
and $t$ is the number of complex places of $F$. The symbol $\bbH$ is used for Hamilton's quaternion division algebra and
$\SL_n(\bbH)$ is the group of elements with reduced norm one in the central simple $\bbR$-algebra $M_n(\bbH)$.
Note that $[F:\bbQ] = r+s+2t$.
For every archimedean place $v$ we fix a $\tau^*$-stable maximal compact subgroup ${K_v \subseteq G(F_v)}$, then the group
 $K_\infty:= \prod_{v\in V_\infty} K_v$ is a $\tau^*$-stable maximal compact subgroup of $G_\infty$.

We study the cohomology of congruence subgroups arising from the group $\SL_\Lambda$.
Let $\LA \subseteq \calO$ be a proper ideal, we define the principal congruence subgroup
\begin{equation*}
 \Gamma(\LA) :=  \ker\bigl( G(\calO) \to G(\calO/\LA) \bigr)
\end{equation*}
of level $\LA$. We shall always assume that $\Gamma(\LA)$ is torsion-free (which holds for almost all ideals). Note that the groups $\Gamma(\LA)$ are always $\tau^*$-stable.

These groups can be described by local data. Let $\LP\subseteq \calO$ be a prime ideal of $\calO$ and let $v$ be the associated
finite place. Let $\nu_\LP(\LA)$ be the maximal exponent $e$ such that $\LP^e$ divides $\LA$, then $\LA\calO_v = \LP^{\nu_\LP(\LA)}\calO_v$.
We obtain an open and compact subgroup $K_v \subseteq G(\calO_v)$ defined as
 \begin{equation*}
     K_v := \ker\bigl( G(\calO_v) \longrightarrow G(\calO_v/\LA\calO_v) \bigr).
 \end{equation*}
We form the direct product $K_f := \prod_{v \in V_f}K_v$, this is an open and compact subgroup 
of the locally compact group $G(\bbA_f)$. Clearly, $\Gamma(\LA) = G(F) \cap K_f$.

We keep the notation introduced in this section. We always assume that
 \begin{enumerate}
  \item The order $\Lambda_D$ is a maximal order in $D$, and
  \item the ideal $\LA \subseteq \calO$ is non-trivial and chosen such that $\Gamma(\LA)$ is torsion-free.
 \end{enumerate}
 
\subsection{Hermitian forms and Galois cohomology}\label{sec:HermitianFormsAndCoho}
 In this section we determine the non-abelian Galois cohomology set $\calH^1(\tau^*)$. Recall that $\calH^1(\tau^*)$
 is the fibred product 
 \begin{equation*}
     \calH^1(\tau^*) := H^1(\tau^*,K_\infty K_f) \mathop{\times}_{H^1(\tau^*,G(\bbA))} H^1(\tau^*, G(F)).
 \end{equation*}
 In order to determine this set we need to calculate local and global cohomology sets.
 The global problem is to determine $H^1(\tau^*, G(F))$, whereas locally
 we have to calculate $H^1(\tau^*, G(F_v))$ and $H^1(\tau^*, K_v)$ for every place $v$.
 We start by determining the corresponding cohomology sets for $\GL_\Lambda$. 
 This task amounts to the classification of certain hermitian forms over quaternion algebras,
 which is basically well-known (see for instance \cite[§2]{Shimura1963} or \cite[Ch.~10]{Scharlau1985}).
 Afterwards we use the pfaffian to obtain results for the special linear group.
 
 \subsubsection{Local results for $\GL_\Lambda$}
 We introduce the following notation: Given two integers $p,q \geq 0$ with $p+q = n$, we
 define the diagonal matrix
 \begin{equation*}
    I_{p,q} = \diag(\underbrace{1,\dots,1}_{p},\underbrace{-1,\dots,-1}_{q}).
 \end{equation*}
 
\begin{proposition}\label{prop:H1GLlocalFields}
  Let $v \in V$ be a place of $F$. If $v$ is a real place where $D$ is ramified, then
  \begin{equation*}
    H^1(\tau^*, \GL_\Lambda(F_v)) \cong \{\:I_{p,q} \:|\: p,q \geq 0 \text { with } p+q = n \:\}.
  \end{equation*}
  This means that the matrices $I_{p,q}$ are a system of representatives for the cohomology classes.
  The cohomology is trivial for all places $v \in V \setminus \Ram_\infty(D)$ , i.e.
  \begin{equation*}
     H^1(\tau^*, \GL_\Lambda(F_v)) =\{1\}.
  \end{equation*}
\end{proposition}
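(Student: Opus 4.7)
\medskip

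\noindent\textbf{Proof plan.} First I would rewrite the cohomology in concrete matrix-theoretic terms. A cocycle $b\in Z^1(\tau^*,\GL_\Lambda(F_v))$ is, by definition of $\tau^*=\inv\circ\tau$, an invertible matrix in $M_n(D_v)$ with $b\tau^*(b)=1$, equivalently $\tau(b)=b$; thus $Z^1(\tau^*,\GL_\Lambda(F_v))$ is exactly the set of invertible $\tau$-symmetric (hermitian) matrices. Two cocycles $b,b'$ are cohomologous iff $b'=y^{-1}b\,\tau^*(y)$ for some $y\in\GL_n(D_v)$; rewriting via $M:=b^{-1}$, $M':=(b')^{-1}$, the relation becomes $M'=\tau(y)My$, which is the classical congruence of hermitian matrices. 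Consequently
\begin{equation*}
 H^1(\tau^*,\GL_\Lambda(F_v)) \;\longleftrightarrow\; \{\text{non-degenerate $\tau_c$-hermitian forms on $D_v^n$}\}/\text{isometry}.
\end{equation*}
Since $\tau_c$ is of symplectic type, classifying these forms is a classical problem for which the answer depends only on the local structure of $D_v$.

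Next I would split into three cases according to the place $v$. \emph{Case 1: $D_v$ is split}, i.e.\ $v$ is complex, or $v$ is a finite or real place at which $D$ is unramified. Choose an isomorphism $D_v\otimes M_n\cong M_{2n}(F_v)$ under which $\tau$ becomes the standard symplectic involution $x\mapsto JxJ^{-1}$ with $J=\bigl(\begin{smallmatrix}0 & I_n\\-I_n & 0\end{smallmatrix}\bigr)$. Then a $\tau$-symmetric invertible matrix corresponds to a non-degenerate alternating bilinear form on $F_v^{2n}$, and by the classical theorem on symplectic forms all such forms are equivalent. Hence $H^1(\tau^*,\GL_\Lambda(F_v))=\{1\}$. \emph{Case 2: $v$ is a finite place where $D$ ramifies}, so $D_v$ is a division quaternion algebra over the non-archimedean local field $F_v$. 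Here I would invoke the classification of $\tau_c$-hermitian forms over $D_v$ (see e.g.\ Scharlau, \emph{Quadratic and Hermitian Forms}, Ch.~10, or Shimura \cite{Shimura1963}, §2): for the canonical (symplectic-type) involution, hermitian forms over $D_v$ of a given rank are all isometric, essentially because the would-be discriminant lives in $F_v^\times/\nrd(D_v^\times)$, which is trivial since the reduced norm is surjective on a non-archimedean local division quaternion algebra. Hence the cohomology is trivial.

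\emph{Case 3: $v$ is a real place where $D$ ramifies}, so $D_v\cong\bbH$. Hermitian forms over $\bbH$ are classified by signature (Sylvester's law of inertia for quaternionic hermitian forms), and the diagonal matrices $I_{p,q}$ with $p+q=n$ give a complete system of representatives. This yields the parametrisation claimed in the proposition.

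The main technical obstacle is the non-archimedean ramified case (Case~2): one has to know that, over a $p$-adic quaternion division algebra with its canonical involution, rank is a complete invariant for $\tau_c$-hermitian forms. This is standard in the hermitian-forms literature but requires either a direct diagonalisation argument (reducing to rank~$1$ and using surjectivity of $\nrd:D_v^\times\to F_v^\times$ to absorb the diagonal entries) or a reference to the classification cited above. The archimedean split, complex, and real-ramified cases are then immediate from the symplectic-form theorem or Sylvester's law.
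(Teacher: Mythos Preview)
Your proposal is correct and follows essentially the same route as the paper: translate $H^1(\tau^*,\GL_\Lambda(F_v))$ into isometry classes of non-degenerate $\tau_c$-hermitian forms on $D_v^n$, then invoke the local classification (Scharlau, Ch.~10). The only cosmetic differences are that the paper reads off the congruence relation $b'=\tau(y)by$ directly from its coboundary convention (so your passage to $M=b^{-1}$ is unnecessary, though harmless since $b\mapsto b^{-1}$ is a bijection on invertible hermitian matrices), and the paper handles all $v\notin\Ram_\infty(D)$ in one stroke via the Scharlau reference rather than separating the split and finite-ramified cases as you do.
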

\begin{proof}
  Let $b \in Z^1(\tau^*, \GL_\Lambda(F_v))$ be a cocycle, this is, $b$ is an element of $\GL_n(D_v)$ satisfying $b = \up{\tau}b$.
  Such a matrix $b$ defines a regular hermitian form on the free right $D_v$-module $D_v^n$.
  
  For all $v \in V \setminus \Ram_\infty(D)$, i.e.~$v$ is not a real ramified place,
  the regular hermitian forms over $D_v$ are classified by their dimension over $F_v$, this follows 
  from Ch.~10, Thm.~1.7 and Ex.~1.8 in \cite{Scharlau1985}. Note that these results cover the case where $D_v$ is a division algebra. However it is easy to obtain
  an analogous result if $D_v \cong M_2(F_v)$ (at least for free regular hermitian spaces).
  Thus we find $g \in \GL_n(D_v)$ with $gb\up{\tau}g = 1$, and so the second assertion follows immediately.

  Let $v \in \Ram_\infty(D)$, then $D_v \cong \bbH$. 
  In this case $\tau_c$-hermitian forms are classified by dimension and signature.
  Translated to the setting of non-abelian Galois cohomology, this means that
  the set ${\{\:I_{p,q} \:|\: p,q \geq 0 \text { with } p+q = n \:\}}$
  is a system of representatives for  $H^1(\tau^*, \GL_\Lambda(F_v))$.
\end{proof}

\begin{definition}\label{def:Signature}
  Let $v \in \Ram_\infty(D)$. For a cocycle $b \in Z^1(\tau^*, \GL_\Lambda(F_v))$ 
  which is cohomologous to $I_{p,q}$ we say that the \emph{signature}
  of $b$ is the pair $(p,q)$.
\end{definition}

\begin{corollary}\label{cor:H1localfiniteGL}
  Let $v \in V_f$ be a finite place, then $H^1(\tau^*, \GL_\Lambda(\calO_v)) = \{1\}$.
\end{corollary}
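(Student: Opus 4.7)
The plan is to reduce the statement immediately to two results that have already been set up: the local calculation over the field (Proposition \ref{prop:H1GLlocalFields}) and the Fainsilber--Morales injectivity theorem (Theorem \ref{thm:FainsilberMorales}). Concretely, one wants to place $H^1(\tau^*,\GL_\Lambda(\calO_v))$ inside $H^1(\tau^*,\GL_\Lambda(F_v))$ and show the target is trivial.

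The first step is to unwind notation: since $\calO_v$ is flat over $\calO$, we have $\GL_\Lambda(\calO_v)=(\Lambda\otimes_\calO\calO_v)^\times=\Lambda_v^\times$, where $\Lambda_v:=\Lambda\otimes_\calO\calO_v$. Similarly $\GL_\Lambda(F_v)=A_v^\times$ with $A_v=A\otimes_F F_v$. The inclusion $\Lambda_v^\times\hookrightarrow A_v^\times$ induces the map
\begin{equation*}
   j_*\colon H^1(\tau^*,\Lambda_v^\times)\longrightarrow H^1(\tau^*,A_v^\times).
\end{equation*}

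The second step is to verify the hypotheses of Theorem \ref{thm:FainsilberMorales} for the pair $(\Lambda_v,A_v)$ over the complete discrete valuation ring $\calO_v$. Maximality of $\Lambda_v$ as an $\calO_v$-order is local and holds since $\Lambda=M_n(\Lambda_D)$ is maximal by Lemma \ref{lem:orderIsNice}; $\tau$-smoothness of $\Lambda_v$ is likewise local (as noted after Definition~\ref{def:tausmooth}) and was established globally in Lemma \ref{lem:orderIsNice}. Hence Theorem \ref{thm:FainsilberMorales} applies and $j_*$ is injective.

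The third step is to invoke Proposition \ref{prop:H1GLlocalFields}: for every finite place $v\in V_f$ one has $v\notin\Ram_\infty(D)$, so $H^1(\tau^*,\GL_\Lambda(F_v))=\{1\}$. Injectivity of $j_*$ then forces $H^1(\tau^*,\GL_\Lambda(\calO_v))=\{1\}$, which is exactly the claim. There is no real obstacle here beyond checking the locality of maximality and $\tau$-smoothness; both are already in place in the text, so the corollary is essentially a formal assembly of the two ingredients.
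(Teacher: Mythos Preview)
Your proof is correct and follows essentially the same approach as the paper: verify that $\Lambda_v$ is maximal and $\tau$-smooth so that Theorem~\ref{thm:FainsilberMorales} gives injectivity of $H^1(\tau^*,\GL_\Lambda(\calO_v))\to H^1(\tau^*,\GL_\Lambda(F_v))$, and then conclude from Proposition~\ref{prop:H1GLlocalFields} since $v\notin\Ram_\infty(D)$.
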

\begin{proof}
  The $\calO$-order $\Lambda$ is maximal and $\tau$-smooth (see \ref{lem:orderIsNice}) and the same holds for 
  the $\calO_v$-order $\Lambda \otimes \calO_v$ (cf.~(11.6) in \cite{Reiner2003} and note that $\tau$-smoothness is a local property).
  By the theorem of Fainsilber-Morales (Thm.~\ref{thm:FainsilberMorales}) the canonical map 
  \begin{equation*}
     H^1(\tau^*, \GL_\Lambda(\calO_v)) \to H^1(\tau^*, \GL_\Lambda(F_v))
  \end{equation*}
 is injective, and hence
  the assertion follows immediately from Proposition \ref{prop:H1GLlocalFields}. 
\end{proof}

\subsubsection{Global results for $\GL_\Lambda$}
Now we start to attack the global problem and analyse $H^1(\tau^*, \GL_\Lambda(F))$.
Again we use the classification of $\tau_c$-hermitian forms.

\begin{proposition}[Hasse principle]\label{prop:HassePrinciple}
    The canonical map 
   \begin{equation*}
         H^1(\tau^*, \GL_\Lambda(F)) \longrightarrow \prod_{v \in \Ram_\infty(D)} H^1(\tau^*, \GL_\Lambda(F_v))
   \end{equation*}
   induced by the inclusions is bijective. This
   means a class in $H^1(\tau^*, \GL_\Lambda(F))$ is uniquely determined by its signatures at the real ramified places.
\end{proposition}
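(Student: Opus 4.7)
The plan is to identify $H^1(\tau^*, \GL_\Lambda(F))$ with isometry classes of regular $\tau_c$-hermitian forms of rank $n$ over $D$, and then invoke the classical Hasse principle for such forms. First I would observe that a cocycle in $Z^1(\tau^*, \GL_\Lambda(F))$ is an element $b \in \GL_n(D)$ with $b = \up{\tau}b$, i.e.\ a regular $\tau_c$-hermitian matrix on the free right $D$-module $D^n$. Two cocycles $b$ and $c$ are cohomologous exactly when there exists $g \in \GL_n(D)$ with $b = \up{\tau}g\,c\,g$, which expresses that the associated hermitian forms are isometric. The same interpretation holds over each completion $F_v$, so the displayed map coincides with the natural localisation map on isometry classes of regular rank-$n$ hermitian spaces over $(D,\tau_c)$.

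For injectivity, suppose $b$ and $c$ have the same image in the target product. By Proposition~\ref{prop:H1GLlocalFields} the local cohomology is trivial at every place outside $\Ram_\infty(D)$, so the hermitian forms associated with $b$ and $c$ are automatically locally isometric there; the hypothesis supplies the remaining local isometries at the real ramified places. I would then invoke the Hasse principle for regular hermitian forms over quaternion algebras (see \cite[Ch.~10]{Scharlau1985} or \cite[§2]{Shimura1963}) to conclude that the two forms are globally isometric, so $b$ and $c$ are cohomologous in $H^1(\tau^*, \GL_\Lambda(F))$.

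For surjectivity, I would realise arbitrary signatures by diagonal cocycles. Since every element of $F$ is fixed by $\tau_c$, any diagonal matrix $a = \diag(a_1,\dots,a_n)$ with $a_i \in F^\times$ satisfies $\up{\tau}a = a$ and defines a class in $H^1(\tau^*, \GL_\Lambda(F))$. At a place $v \in \Ram_\infty(D)$ the resulting form on $\bbH^n$ equals $\sum \overline{x_i} a_i x_i$ and has signature $(p_v,q_v)$, where $p_v$ counts those indices $i$ for which $a_i$ is positive under the embedding $F \hookrightarrow F_v$. Weak approximation for $F^\times$ at the (finitely many) real ramified places allows me to prescribe the sign patterns of the $a_i$ arbitrarily, hence any tuple of signatures in the target is attained.

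The main obstacle is the Hasse principle invoked in the injectivity step. When $D$ is a division algebra this is a classical theorem originally due to Landherr and Kneser; when $D \simeq M_2(F)$ a Morita equivalence reduces $\tau_c$-hermitian forms on $D^n$ to alternating bilinear forms on $F^{2n}$, for which the Hasse principle is standard. Both cases are covered by the references cited above, but some bookkeeping is required to see that the quaternionic Hasse principle applies uniformly and that the local triviality at finite ramified places (which comes from Corollary~\ref{cor:H1localfiniteGL} extended to $F_v$) is correctly incorporated.
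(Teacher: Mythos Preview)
Your proposal is correct and follows essentially the same route as the paper: identify cocycles with regular $\tau_c$-hermitian matrices, so that cohomology classes correspond to isometry classes of rank-$n$ hermitian spaces, and then invoke the classification of such forms by dimension and signatures (Scharlau, Ch.~10). The paper is terser---it simply cites the classification and treats the split case by observing that $H^1$ is trivial---whereas you separate injectivity and surjectivity and spell out the weak-approximation argument for realising arbitrary signatures, but the underlying idea is the same; one minor slip is that the local triviality at finite places you need is Proposition~\ref{prop:H1GLlocalFields}, not Corollary~\ref{cor:H1localfiniteGL}.
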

\begin{proof}
  If $D$ is not a division algebra is is easily checked that $H^1(\tau^*, \GL_\Lambda(F)) = \{1\}$.
  Thus there is nothing to show.

  Assume that $D$ is a division algebra.
  The regular hermitian forms over $D$ (w.r.t.~$\tau_c$) are classified by dimension and their signatures at the real places of $F$ where $D$ ramifies (see Ch.~10, 1.8 in \cite{Scharlau1985}).
  The claim follows as in the local case.
\end{proof}

\subsubsection{The pfaffian associated with $\tau$}\label{par:pfaffianDiag}
We explain how to compute the pfaffian associated with $\tau$ (see \ref{par:pfaffian}) for diagonal matrices.
Let $k$ be any extension field of $F$, for example a local completion.
Given a diagonal matrix $x = \diag(x_1, \dots, x_n)$ with entries in $k$, we can consider
$x$ as a $\tau$-fixed matrix in $A \otimes_F k = M_n(D\otimes_F k)$.
\begin{lemma}
   For $x = \diag(x_1, \dots, x_n)$ with entries in some extension field $k$ of $F$, the pfaffian of $x$
   is the product of all entries, i.e.
   \begin{equation*}
      \pf_\tau(x) = x_1 x_2 \cdots x_n.
   \end{equation*}
\end{lemma}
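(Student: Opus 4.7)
The strategy is to use the defining property of the pfaffian, namely $\pf_\tau^2 = \nrd_{|E}$, to reduce the claim to the (easy) computation of the reduced norm of a diagonal matrix, and then fix the sign by evaluating at the identity.

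First I would compute $\nrd_{A\otimes k}(x)$ for a diagonal matrix $x = \diag(x_1,\dots,x_n)$ with entries in $k$. Each $x_i$, viewed as a scalar in the central simple $k$-algebra $D\otimes_F k$ of degree $2$, has reduced norm $x_i^2$. Since the reduced norm is multiplicative on block-diagonal matrices (this is most easily seen after passing to a splitting field of $D\otimes_F k$, where $x$ becomes a block-diagonal $2n\times 2n$ matrix with blocks $x_i I_2$), one obtains
\begin{equation*}
  \nrd_{A\otimes k}(x) \:=\: \prod_{i=1}^n \nrd_{D\otimes k}(x_i) \:=\: (x_1 x_2 \cdots x_n)^2.
\end{equation*}
Since $x \in \Sym(\Lambda,\tau)\otimes_F k$ (diagonal matrices with entries in $k$ are fixed by $\tau$), this also equals $\nrd_{|E}(x)$, and hence $\pf_\tau(x)^2 = (x_1\cdots x_n)^2$.

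Next I would argue that this forces $\pf_\tau(x) = \pm x_1\cdots x_n$ as polynomials in the variables $x_1,\dots,x_n$. Indeed, the restriction of $\pf_\tau$ to the subspace of diagonal matrices is a polynomial in $x_1,\dots,x_n$ whose square equals $(x_1\cdots x_n)^2$. Working in the integral domain $R[x_1,\dots,x_n]$, the identity $(\pf_\tau - x_1\cdots x_n)(\pf_\tau + x_1\cdots x_n) = 0$ implies $\pf_\tau = \pm x_1\cdots x_n$ as an equality of polynomials, so the sign is a global constant. Finally, the normalisation $\pf_\tau(1) = 1$ (applied to $x_1 = \dots = x_n = 1$) forces the positive sign, completing the proof.

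There is no real obstacle here; the only subtlety is the uniform sign, which is handled by the polynomial identity argument and the chosen normalisation of $\pf_\tau$.
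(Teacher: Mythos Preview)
Your proof is correct but takes a different route from the paper. The paper does not argue via the square relation $\pf_\tau^2 = \nrd_{|E}$ and a polynomial-factorisation to fix the sign; instead it passes to an algebraically closed extension of $k$, where the reduced norm $\nrd_D: D\otimes k \to k$ is surjective. There one writes each $x_i = \overline{y_i}\,y_i = \nrd_D(y_i)$ for some $y_i \in D\otimes k$, sets $y = \diag(y_1,\dots,y_n)$ so that $\tau(y)y = x$, and then applies the transformation law $\pf_\tau(\tau(y)\cdot 1\cdot y) = \nrd_A(y)\pf_\tau(1)$ (Lemma~\ref{lem:pfaffian}(ii)) to get $\pf_\tau(x) = \nrd_A(y) = \prod_i \nrd_D(y_i) = \prod_i x_i$ directly, with no sign ambiguity ever arising. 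Your approach is more self-contained in that it avoids the transformation law (which in the paper is imported from the Book of Involutions) and uses only the defining property of the pfaffian together with an integral-domain argument; the paper's approach is shorter once that lemma is in hand and sidesteps the separate sign discussion.
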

\begin{proof}
    We can assume without loss of generality that $k$ is algebraically closed.
    In this case $D \otimes_F k \cong M_2(k)$ and the reduced norm $\nrd_D: D \otimes_F k \to k$ agrees with the determinant,
    in particular it is surjective.
    This means, for given $i \in \{1,\dots,n\}$, we can write $x_i = \nrd_D(y_i) = \conj{y_i}y_i$ for some $y_i \in D \otimes_F k$.
    Consider the matrix $y = \diag(y_1,\dots,y_n) \in M_n(D \otimes_F k)$, this matrix satisfies $\tau(y) y = x$.
    By Lemma \ref{lem:pfaffian} we obtain
    \begin{equation*}
        \pf_\tau(x) = \nrd_{A}(y) = \prod_{i=1}^n \nrd_D(y_i) = \prod_{i=1}^n x_i.
    \end{equation*}
    Here we used that the reduced norm of a diagonal matrix in $M_n(D \otimes_F k)$ is
    the product of the reduced norms of the entries (see IX, §2, Cor.~2 in \cite{WeilBNT}).
\end{proof}
  Note in particular that the pfaffian $\pf_\tau: \Sym(\Lambda, \tau) \to \calO$ is surjective.

% \begin{corollary}
%   Let $C$ be a commutative $\calO$-algebra which is an integral domain of characteristic $0$, then
%   the pfaffian $\pf_\tau: \Sym(\Lambda, \tau) \otimes_{\calO} C \to C$ is surjective.
% \end{corollary}
% \begin{proof}
%    Let $k$ be the quotient field of $C$. By assumption $\calO$ injects into $C$, thus $k$ is an extension field of $F$.
%    Note that the canonical map 
%   \begin{equation*}
%      \Sym(\Lambda, \tau) \otimes_\calO C \to \Sym(\Lambda, \tau) \otimes_\calO k
%   \end{equation*}
%    is injective since $\Sym(\Lambda, \tau)$ is a
%    projective $\calO$-module.
% 
%    Let $c \in C$ be given. 
%    The matrix $x = \diag(c,1,\dots,1) \in \Lambda\otimes_{\calO} C$
%    actually lies in $\Sym(\Lambda, \tau) \otimes_{\calO} C$. The lemma, applied with the field $k$,
%    yields $\pf_\tau(x) = c$.
% \end{proof}

\subsubsection{Transfer of results to $\SL_\Lambda$}
 The final step in this section is to transfer the results on non-abelian Galois cohomology with values in $\GL_\Lambda$
 to the group $G = \SL_\Lambda$.
 Our main tool is the cohomological diagram for symplectic involutions Prop.~\ref{prop:diagramSymplectic}.

\begin{lemma}\label{lem:H1nrdOnto}
  Let $v \in V \setminus \Ram_\infty(D)$ be a place of $F$. 
 Then the Pfaffian induces a bijection
    \begin{equation*}
      \pf_\tau : H^1(\tau^*, \SL_\Lambda(F_v)) \stackrel{\simeq}{\longrightarrow} \{\pm 1\}. 
   \end{equation*}
\end{lemma}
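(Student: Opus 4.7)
The plan is to apply the cohomological diagram for symplectic involutions (Proposition~\ref{prop:diagramSymplectic}) with $C = F_v$, which is certainly flat over the Dedekind ring $\calO_v$. The diagram yields an exact sequence of pointed sets
\begin{equation*}
   F_v^{(2)} \cap F_{v,\Lambda}^\times \stackrel{\delta}{\longrightarrow} H^1(\tau^*, \SL_\Lambda(F_v)) \stackrel{j_*}{\longrightarrow} H^1(\tau^*, \GL_\Lambda(F_v))
\end{equation*}
with $\delta$ injective, together with the commutativity relation $\pf_\tau \circ \delta = \mathrm{id}$ on $F_v^{(2)} \cap F_{v,\Lambda}^\times$.

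Next I would invoke Proposition~\ref{prop:H1GLlocalFields}: since $v \notin \Ram_\infty(D)$, the set $H^1(\tau^*,\GL_\Lambda(F_v))$ is trivial. Exactness therefore forces $\delta$ to be surjective, and combined with its injectivity it is a bijection from $F_v^{(2)} \cap F_{v,\Lambda}^\times$ onto $H^1(\tau^*,\SL_\Lambda(F_v))$. The identity $\pf_\tau \circ \delta = \mathrm{id}$ then shows that $\pf_\tau$ is a two-sided inverse of $\delta$ when restricted to its target, so once we identify $F_v^{(2)} \cap F_{v,\Lambda}^\times$ with $\{\pm 1\}$ we are done.

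The only nontrivial point is therefore to check that $F_{v,\Lambda}^\times = \nrd_{\Lambda/\calO_v}(\GL_\Lambda(F_v))$ contains $-1$, equivalently that the reduced norm $\nrd : (A\otimes_F F_v)^\times \to F_v^\times$ is surjective for each place $v \notin \Ram_\infty(D)$. This is where I expect the only subtlety. For a complex place the statement is vacuous, for a real split place $A\otimes F_v \cong M_{2n}(\bbR)$ and the reduced norm is just the determinant, which is surjective. For a finite place surjectivity of the reduced norm on a central simple algebra over a non-archimedean local field is a classical consequence of local class field theory (see e.g.~(33.4) in \cite{Reiner2003}), and covers both the split and the ramified case uniformly.

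Once surjectivity of the reduced norm is established, $F_v^{(2)} \cap F_{v,\Lambda}^\times = \{\pm 1\} \cap F_v^\times = \{\pm 1\}$, and the chain of identifications $\{\pm 1\} \xrightarrow{\delta} H^1(\tau^*, \SL_\Lambda(F_v))$ with inverse $\pf_\tau$ gives the desired bijection.
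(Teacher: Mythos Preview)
Your proposal is correct and follows essentially the same route as the paper: invoke Proposition~\ref{prop:diagramSymplectic}, use Proposition~\ref{prop:H1GLlocalFields} to collapse the $\GL$-cohomology to a point so that $\delta$ becomes a bijection, and identify $F_v^{(2)}\cap F_{v,\Lambda}^\times$ with $\{\pm 1\}$ via surjectivity of the reduced norm (the paper cites (33.4) in \cite{Reiner2003} for this, exactly as you do). One cosmetic slip: the flatness hypothesis in Proposition~\ref{prop:diagramSymplectic} is over the base ring $R=\calO$, not $\calO_v$, but $F_v$ is flat over $\calO$ in any case.
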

\begin{proof}
   If follows from Prop.~\ref{prop:H1GLlocalFields} that $H^1(\tau^*,\GL_\Lambda(F_v))$ is trivial.
   The cohomological diagram for symplectic involutions (see Prop.~\ref{prop:diagramSymplectic}) collapses to
   \begin{equation*}
      \begin{CD}
          \{\pm1\} @>{\delta}>> H^1(\tau^*, \SL_\Lambda(F_v) ) @>>> 1 \\
             @|                      @VV{\pf_\tau}V                 @. \\
          \{\pm1\}   @=                \{\pm1\}.     @.   \\
      \end{CD}
   \end{equation*}
   Here we used that $\nrd_\Lambda: \GL_\Lambda(F_v) \to F_v^\times$ is surjective (see (33.4) in \cite{Reiner2003}).
   By Prop.~\ref{prop:diagramSymplectic} the morphism $\delta$ is injective, and thus bijective.
\end{proof}

\begin{lemma}\label{lem:H1localSL}
   Let $v \in \Ram_\infty(D)$. The canonical map 
   \begin{equation*}
     j_*: H^1(\tau^*,\SL_\Lambda(F_v)) \to H^1(\tau^*,\GL_\Lambda(F_v))
   \end{equation*}
   is bijective.
\end{lemma}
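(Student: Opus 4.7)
The plan is to apply the cohomological diagram of Proposition \ref{prop:diagramSymplectic} with $C = F_v$ and extract both injectivity and surjectivity of $j_*$ from the exactness of the top row, using the explicit representatives $I_{p,q}$ from Proposition \ref{prop:H1GLlocalFields} on the right.

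First I would identify all the ingredients of the diagram. Since $v \in \Ram_\infty(D)$, we have $F_v = \mathbb{R}$ and $D_v \cong \mathbb{H}$, so $\nrd: \GL_n(\mathbb{H}) \to \mathbb{R}^\times$ has image $\mathbb{R}_{>0}$ (e.g., because $\nrd_D(d) = d\bar{d} \geq 0$ for $d \in \mathbb{H}$ and more generally via the fact that $A\otimes_{\mathbb{R}} \mathbb{C} \cong M_{2n}(\mathbb{C})$ with $\nrd$ corresponding to the determinant). Hence $C^\times_\Lambda = \mathbb{R}_{>0}$, $C^{(2)} = \{\pm 1\}$, and therefore $C^{(2)} \cap C^\times_\Lambda = \{1\}$ while $C^\times/(C^\times_\Lambda)^2 = \{\pm 1\}$.

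With these identifications, the top row of the diagram in Proposition \ref{prop:diagramSymplectic} collapses to the exact sequence of pointed sets
\begin{equation*}
    1 \longrightarrow H^1(\tau^*, \SL_\Lambda(F_v)) \xrightarrow{j_*} H^1(\tau^*, \GL_\Lambda(F_v)) \xrightarrow{\nrd} \{\pm 1\},
\end{equation*}
which already gives the injectivity of $j_*$. For surjectivity it suffices to show that $\nrd$ is trivial on $H^1(\tau^*, \GL_\Lambda(F_v))$. By Proposition \ref{prop:H1GLlocalFields}, every class in the target is represented by some $I_{p,q}$ with $p+q = n$. Using the formula that the reduced norm of a diagonal matrix in $M_n(D_v)$ is the product of the reduced norms of its entries together with $\nrd_D(\pm 1) = 1$, we obtain $\nrd_A(I_{p,q}) = 1$ for every $(p,q)$, so each such class lies in the kernel of $\nrd$ and thus in the image of $j_*$.

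I do not foresee a serious obstacle here; the only thing to be careful about is to confirm that $I_{p,q}$ actually defines a cocycle in $Z^1(\tau^*,\SL_\Lambda(F_v))$ that hits the given class in $H^1(\tau^*,\GL_\Lambda(F_v))$, which is immediate from $\tau(I_{p,q}) = \overline{I_{p,q}}^{\,T} = I_{p,q}$ and $\nrd(I_{p,q})=1$. Combining injectivity with this surjectivity concludes the proof.
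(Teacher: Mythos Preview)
Your surjectivity argument is fine and essentially agrees with the paper's (you compute $\nrd(I_{p,q})=1$ directly, while the paper invokes Corollary~\ref{cor:imageJ} via the pfaffian; both amount to checking that every class lies in the kernel of the last map).

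The gap is in the injectivity step. In non-abelian cohomology the top row of Proposition~\ref{prop:diagramSymplectic} is only an exact sequence of \emph{pointed sets}. Exactness at $H^1(\tau^*,\SL_\Lambda(F_v))$ together with $C^{(2)}\cap C^\times_\Lambda=\{1\}$ tells you that the \emph{preimage of the distinguished class} under $j_*$ is trivial; it does \emph{not} tell you that $j_*$ is injective. Two distinct classes in $H^1(\tau^*,\SL_\Lambda(F_v))$ could in principle map to the same nontrivial class in $H^1(\tau^*,\GL_\Lambda(F_v))$ without contradicting exactness of pointed sets. This is exactly the phenomenon Serre warns about in \cite[I, \S5]{Serre1964}.

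The paper closes this gap by twisting. Given any cocycle $b\in Z^1(\tau^*,\SL_\Lambda(F_v))$, the twisted involution $\tau|b$ is again of symplectic type (Remark~\ref{rem:twistingInvolutions}), so Proposition~\ref{prop:diagramSymplectic} applies to $(\tau|b)^*=\tau^*|b$ and again yields $C^{(2)}\cap C^\times_\Lambda=\{1\}$, hence trivial kernel for the twisted $j_*$. Since twisting by $b$ induces a bijection of cohomology sets carrying the class of $b$ to the trivial class, triviality of the kernel \emph{for every twist} gives genuine injectivity of $j_*$. You should insert this twisting argument in place of the sentence ``which already gives the injectivity of $j_*$''.
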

\begin{proof}
   In this case the reduced norm takes only positive values in $F_v \cong \bbR$.
   Therefore the cohomological diagram for symplectic involutions (Prop.~\ref{prop:diagramSymplectic}) yields
   \begin{equation*}
      \begin{CD}
         1 @>>> H^1(\tau^*, \SL_\Lambda(F_v)) @>{j_*}>> H^1(\tau^*, \GL_\Lambda(F_v)) \\
         @.         @VV{\pf_\tau}V                        @VV{\pf_\tau}V              \\     
         1 @>>>   \{\pm1\}           @>{\simeq}>>        \bbR^\times/\bbR^\times_{>0} 
      \end{CD}
   \end{equation*}
   It follows directly from Corollary \ref{cor:imageJ} that $j_*$ is surjective.
   Moreover, twisting the upper row with cocycles for $H^1(\tau^*, \SL_\Lambda(F_v))$ shows that
   $j_*$ is indeed injective.
   For more details on twisting in non-abelian cohomology the reader may consult \cite[I,5.4]{Serre1964}.
   Note that twisting an involution of symplectic type gives an involution of symplectic type (see Remark \ref{rem:twistingInvolutions}).
\end{proof}

\begin{lemma}\label{lem:H1finiteLocalSL}
 Let $v$ be a finite place and let $\LP_v \subseteq \calO_v$ be the prime ideal. 
 For an integer $m \geq 0$ we define $K_v(m) := \ker(G(\calO_v) \to G(\calO_v/\LP^m_v))$.
 Then the pfaffian induces a bijection 
   \begin{equation*}
        \pf_\tau: H^1(\tau^*,K_v(m)) \isomorph \begin{cases}
                                                               \{\pm 1\} \:& \text{ if $-1 \equiv 1 \mod \LP_v^m$, }\\
                                                                 \{ 1 \}   & \text{ otherwise }.
                                                   \end{cases}
   \end{equation*}
\end{lemma}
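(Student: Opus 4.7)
The strategy is to study the pfaffian on $1$-cocycles directly, using Corollary \ref{cor:H1localfiniteGL} to pass from $\SL_\Lambda$ to $\GL_\Lambda$ and Proposition \ref{prop:smoothFPGroups} to enforce the congruence condition.

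A cocycle $b \in Z^1(\tau^*, K_v(m))$ is a $\tau$-symmetric element of $K_v(m)$. Since $b \in \SL_\Lambda$, the relation $\pf_\tau(b)^2 = \nrd_\Lambda(b) = 1$ forces $\pf_\tau(b) \in \{\pm 1\}$; and since $b \equiv 1 \bmod \LP^m_v$ and the pfaffian is a polynomial map with $\pf_\tau(1) = 1$, one has $\pf_\tau(b) \equiv 1 \bmod \LP^m_v$. Thus the image of $\pf_\tau$ lies in $\{\pm 1\} \cap (1+\LP^m_v)$, which is exactly the target set in the statement, and Lemma \ref{lem:pfaffian}(ii) together with $\nrd|_{K_v(m)} = 1$ shows that $\pf_\tau$ is invariant on cohomology classes. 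When $-1 \equiv 1 \bmod \LP^m_v$, surjectivity is witnessed by $b_0 := \diag(-1,1,\dots,1)$, which lies in $K_v(m)$ and satisfies $\pf_\tau(b_0) = -1$ by the diagonal formula established in Section \ref{par:pfaffianDiag}.

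For injectivity, let $b, b' \in Z^1(\tau^*, K_v(m))$ have the same pfaffian $\varepsilon$. By Corollary \ref{cor:H1localfiniteGL} both cocycles are coboundaries in $\GL_\Lambda(\calO_v)$: write $b = \tau(y)y$ and $b' = \tau(y')y'$ with $y, y' \in \GL_\Lambda(\calO_v)$; Lemma \ref{lem:pfaffian} forces $\nrd(y) = \nrd(y') = \varepsilon$. Setting $z := y^{-1}y'$ gives $z \in \SL_\Lambda(\calO_v)$ and $\tau(z)bz = b'$, so $b$ and $b'$ are cohomologous over $\SL_\Lambda(\calO_v)$. The remaining---and main---obstacle is that this cochain need not lie in the congruence subgroup $K_v(m)$.

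To overcome this I would exploit the freedom in the choice of $y$. Any $g \in G(\Lambda,\tau)(\calO_v)$, i.e.\ satisfying $\tau(g)g=1$, may replace $y$ by $gy$ without altering $\tau(y)y = b$, since in addition $\nrd(g) = 1$ by Remark \ref{rem:symplNrdOne}. Reducing the relations $b \equiv b' \equiv 1 \bmod \LP^m_v$ shows that $\bar y, \bar y'$ both lie in $G(\Lambda,\tau)(\calO_v/\LP^m_v)$, and a short computation (using $\tau(\bar y^{-1}) = \bar y$) verifies that $\bar y' \bar y^{-1}$ lies there as well. By Lemma \ref{lem:orderIsNice} the order $\Lambda$ is $\tau$-smooth, so Proposition \ref{prop:smoothFPGroups} makes $G(\Lambda,\tau)$ a smooth $\calO$-scheme; therefore $\bar y' \bar y^{-1}$ admits a lift $g \in G(\Lambda,\tau)(\calO_v)$. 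After replacing $y$ by $gy$ the cochain $z = (gy)^{-1}y'$ reduces to $1$ modulo $\LP^m_v$, hence lies in $K_v(m)$, and by construction still satisfies $\tau(z)bz = b'$. This exhibits $b$ and $b'$ as cohomologous in $K_v(m)$ and establishes the bijection.
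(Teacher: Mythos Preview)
Your argument is correct. The key inputs---triviality of $H^1(\tau^*,\GL_\Lambda(\calO_v))$ from Corollary~\ref{cor:H1localfiniteGL}, the pfaffian formula $\pf_\tau(\tau(y)y)=\nrd(y)$, and smoothness of the fixed-point scheme $G(\Lambda,\tau)$ from Proposition~\ref{prop:smoothFPGroups}---are exactly the ones the paper uses, and your computations with them (in particular the lift of $\bar y'\bar y^{-1}$ and the verification that the adjusted cochain lands in $K_v(m)$) are sound.

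The organisation, however, differs from the paper's. The paper treats $m=0$ separately via the cohomological diagram of Proposition~\ref{prop:diagramSymplectic} (which additionally requires surjectivity of $\nrd\colon\GL_\Lambda(\calO_v)\to\calO_v^\times$), and for $m\ge 1$ invokes the long exact sequence attached to $1\to K_v(m)\to G(\calO_v)\to G(\calO_v/\LP_v^m)\to 1$: smoothness of $G(\Lambda,\tau)$ makes the connecting map $\delta$ trivial, and a twisting argument (Remark~\ref{rem:twistingInvolutions}) upgrades this to injectivity of $j_m\colon H^1(\tau^*,K_v(m))\to H^1(\tau^*,G(\calO_v))$. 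Your proof is a direct, constructive unwinding of precisely this machinery: writing $b=\tau(y)y$ replaces the diagram, and your lifting step is the concrete content of the twisting/exact-sequence argument. The advantage of your route is that it handles all $m\ge 0$ uniformly and exhibits the cohomologising element explicitly; the paper's route is more modular and makes the role of the exact sequence transparent, at the cost of invoking the twisting formalism.
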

\begin{proof}
  We start with the special case $m=0$, this is $K_v(m) = \SL_\Lambda(\calO_v)$.
  Here the claim follows just as in the proof of Lemma \ref{lem:H1nrdOnto} from
  Prop.~\ref{prop:diagramSymplectic}, Cor.~\ref{cor:H1localfiniteGL}, and the fact that 
  the reduced norm $\nrd_\Lambda: \GL_\Lambda(\calO_v) \to \calO_v^\times$ is onto (combine (14.1) and Ex.~5, p.~152 in \cite{Reiner2003}). 

  For $m \geq 1$ consider the short exact sequence of groups
  \begin{equation*}
     1 \longrightarrow K_v(m) \longrightarrow \SL_\Lambda(\calO_v) \longrightarrow \SL_\Lambda(\calO_v/ \LP_v^m) \longrightarrow 1.
  \end{equation*}
  Note that this sequence uses that the order $\Lambda$, and hence the group scheme $\SL_\Lambda$, is smooth (cf.~\ref{lem:orderIsNice}).
  We obtain a long exact sequence of pointed sets
  \begin{equation*}
      G^{\tau^*}(\calO_v) \stackrel{\pi}{\longrightarrow} G^{\tau^*}(\calO_v/ \LP_v^m) \stackrel{\delta}{\longrightarrow} H^1(\tau^*, K_v(m)) 
                  \stackrel{j_m}{\longrightarrow} H^1(\tau^*, G(\calO_v)).
  \end{equation*}
  It follows from Remark \ref{rem:symplNrdOne} that the fixed point group $G^{\tau^*}$ 
  is just the group scheme $G(\Lambda,\tau)$ defined in \ref{sec:FixedPointGroups}.
  Since the group scheme $G(\Lambda,\tau)$ is smooth (Prop.~\ref{prop:smoothFPGroups}),
  the canonical map $\pi$ is surjective, and so $\delta$ is trivial.
  Via twisting (cf.~Remark \ref{rem:twistingInvolutions}) we obtain that $j_m$ is injective.

  We use that the pfaffian is a morphism of schemes defined over $\calO$ (cf.~\ref{par:pfaffian}):
  Given a cocycle $b \in Z^1(\tau^*, K_v(m))$, we have $\pf_\tau(b) \equiv 1 \mod \LP_v^m$.
  Consequently, if $1$ and $-1$ are not congruent modulo $\LP_v^m$, then $H^1(\tau^*,K_v(m)) = \{1\}$ and the claim follows.

  Assume now that $-1 \equiv 1 \mod \LP_v^m$, then the matrix $\diag(-1,1,\dots,1)$ lies in $K_v(m)$ and has pfaffian $-1$
  (cf.~\ref{par:pfaffianDiag}).
\end{proof}

For a real place $v \in V_\infty$ we denote the associated embedding $F \to \bbR$ by $\iota_v$.
 Define 
  \begin{equation*}
     F^\times_D =\{\:x \in F^\times\:|\:\iota_v(x)>0 \text{ for all } v \in \Ram_\infty(D)\:\}.
 \end{equation*}
  By the Hasse-Schilling-Maass Theorem (cf.~(33.15) in \cite{Reiner2003}) the image of the reduced norm ${\nrd_A: A^\times \to F^\times}$ is $F^\times_D$.
  
\begin{lemma}\label{lem:H1globalSL}
  Assume that $\Ram_\infty(D)$ is not empty.
  Then the canonical morphism of pointed sets
   \begin{equation*}
       j_*: H^1(\tau^*,\SL_\Lambda(F)) \longrightarrow  H^1(\tau^*,\GL_\Lambda(F))
   \end{equation*}
   is injective. The image consists of precisely
   those classes $x \in  H^1(\tau^*,\GL_\Lambda(F))$
    which satisfy $\pf_\tau(x) = \pm 1 \cdot F^\times_D$.

  If otherwise $D$ splits at every real place, then the pfaffian
  induces a bijection
  \begin{equation*}
      \pf_\tau: H^1(\tau^*,\SL_\Lambda(F)) \stackrel{\simeq}{\longrightarrow} \{\pm 1\}.
  \end{equation*}
\end{lemma}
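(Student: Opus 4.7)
The plan is to derive both statements from the cohomological diagram for symplectic involutions (Proposition \ref{prop:diagramSymplectic}) applied over $C = F$, together with the Hasse-Schilling-Maass theorem. Concretely, Hasse-Schilling-Maass tells us that $\nrd_A(A^\times) = F^\times_D$; since $\Lambda$ spans $A$ over $F$, one checks that $F^\times_\Lambda := \nrd_{\Lambda/R}(\GL_\Lambda(F))$ coincides with $F^\times_D$. With this identification the diagram in Proposition~\ref{prop:diagramSymplectic} reads
\begin{equation*}
\begin{CD}
 \{\pm 1\}\cap F^\times_D @>{\delta}>> H^1(\tau^*,\SL_\Lambda(F)) @>{j_*}>> H^1(\tau^*,\GL_\Lambda(F)) @>{\nrd}>> F^\times/(F^\times_D)^2 \\
  @|  @V{\pf_\tau}VV @V{\pf_\tau}VV @|  \\
  \{\pm 1\}\cap F^\times_D @>>> \{\pm 1\} @>>> F^\times/F^\times_D @>{\cdot^2}>> F^\times/(F^\times_D)^2.
\end{CD}
\end{equation*}

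First I would treat the case where $\Ram_\infty(D) \neq \emptyset$. Pick any $v \in \Ram_\infty(D)$; then $\iota_v(-1) < 0$, so $-1 \notin F^\times_D$, hence $\{\pm 1\}\cap F^\times_D = \{1\}$. Consequently the connecting map $\delta$ is trivial and exactness of the top row forces $j_*$ to be injective. For the image description I would invoke Corollary~\ref{cor:imageJ}: a class $x \in H^1(\tau^*,\GL_\Lambda(F))$ lies in the image of $j_*$ if and only if $\pf_\tau(x)$ is in the image of the canonical map $\{\pm 1\} \to F^\times/F^\times_D$, which is exactly the condition $\pf_\tau(x) \in \pm 1 \cdot F^\times_D$.

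Next I would handle the case in which $D$ splits at every real place, so $\Ram_\infty(D) = \emptyset$ and $F^\times_D = F^\times$. Here the Hasse principle (Proposition~\ref{prop:HassePrinciple}) gives $H^1(\tau^*,\GL_\Lambda(F)) = \{1\}$, so by exactness of the upper row every element of $H^1(\tau^*,\SL_\Lambda(F))$ lies in the image of $\delta$. Since $\{\pm 1\}\cap F^\times_D = \{\pm 1\}$ and $\delta$ is injective (cf.~Proposition~\ref{prop:diagramSymplectic}), $\delta$ is a bijection from $\{\pm 1\}$ onto $H^1(\tau^*,\SL_\Lambda(F))$. The commutativity of the leftmost square then shows that $\pf_\tau \circ \delta$ is the identity on $\{\pm 1\}$, whence $\pf_\tau : H^1(\tau^*,\SL_\Lambda(F)) \to \{\pm 1\}$ is a bijection.

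No step is genuinely hard once the diagram of Proposition~\ref{prop:diagramSymplectic} is in hand; the only point requiring a small verification is the identification $F^\times_\Lambda = F^\times_D$, which is the interface where Hasse-Schilling-Maass enters. The rest is diagram chasing plus the sign observation $-1 \notin F^\times_D$ whenever $D$ ramifies at some real place.
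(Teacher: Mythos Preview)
Your overall strategy matches the paper's: both cases are read off the cohomological diagram of Proposition~\ref{prop:diagramSymplectic} together with Hasse--Schilling--Maass, and your treatment of the split case ($\Ram_\infty(D)=\emptyset$) is correct and essentially identical to the paper's.

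There is, however, a genuine gap in the ramified case. From $\{\pm1\}\cap F^\times_D=\{1\}$ you conclude that $\delta$ is trivial and then write ``exactness of the top row forces $j_*$ to be injective.'' This is not valid: the top row is only an exact sequence of \emph{pointed sets}, so exactness at $H^1(\tau^*,\SL_\Lambda(F))$ says merely that $j_*^{-1}(\text{base point})=\operatorname{im}\delta=\{\text{base point}\}$. Trivial kernel does not imply injectivity for maps of pointed sets. The paper closes this gap by \emph{twisting}: for any cocycle $b\in Z^1(\tau^*,\SL_\Lambda(F))$ the twisted involution $\tau|b$ is again of symplectic type (Remark~\ref{rem:twistingInvolutions}), so Proposition~\ref{prop:diagramSymplectic} applies to $(\tau|b)^*$, and the same observation $-1\notin F^\times_D$ shows that the twisted connecting map is trivial. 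Via the standard twisting bijection this identifies the fibre of $j_*$ through $[b]$ with a single point; since $b$ was arbitrary, $j_*$ is injective. You should insert this twisting step (or at least cite it) in place of the appeal to exactness.
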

\begin{proof}
  Assume that $\Ram_\infty(D)$ is empty.
  By the Hasse-Schilling-Maass Theorem the reduced norm 
  $\GL_\Lambda(F) \to F^\times$ is surjective and the second assertion follows as in Lemma \ref{lem:H1nrdOnto}.

  Now we assume that $\Ram_\infty(D)$ is not empty.
  The image of the reduced norm ${\nrd_A: A^\times \to F^\times}$ is $F^\times_D$. Note that $F_D^\times$
  can not contain the element $-1$ since $\Ram_\infty(D)$ is not empty.
  Consider the cohomological diagram for symplectic involutions (Prop.~\ref{prop:diagramSymplectic})
  \begin{equation*}
    \begin{CD}
       1 @>>> H^1(\tau^*, \SL_\Lambda(F)) @>{j_*}>> H^1(\tau^*, \GL_\Lambda(F)) \\
        @.          @VV{\pf_\tau}V                     @VV{\pf_\tau}V \\
       1  @>>>      \{\pm1\}                    @>>>   F^\times/F_D^\times. 
     \end{CD}       
  \end{equation*}
    Twisting shows that the map $j_*$ is injective. The assertion about the image of $j_*$ follows
    immediately from Corollary \ref{cor:imageJ}.
\end{proof}

\begin{remark}
 Assume that $\Ram_\infty(D)$ is not empty. Let $x \in H^1(\tau^*, \GL_\Lambda(F))$ be a cohomology class.
 For every place $v \in \Ram_\infty(D)$ the class $x$ considered as a class in $H^1(\tau^*, \GL_\Lambda(F_v))$ has a local signature $(p_v, q_v)$.
 Then according to Lemma \ref{lem:H1globalSL} the class $x$ lies in
 the image of $j_*$ if and only if  
  \begin{equation*}
   q_v \equiv q_w \mod 2
  \end{equation*}
 for every pair of places $v, w \in \Ram_\infty(D)$.
 This means that either all $q_v$ are even or all $q_v$ are odd.
\end{remark}

\begin{theorem}\label{thm:H1Sequence}
   Let $K_f = \prod_{v \in V_f} K_v \subseteq G(\bbA_f)$ be the open compact subgroup associated
   with the congruence subgroup $\Gamma(\LA)$ (cf.~\ref{par:defCongruenceGroups}).
   Consider the set $\calH^1(\tau^*)$ (cf.~beginning of Section  \ref{sec:HermitianFormsAndCoho}).  
   The projection $\pi: \calH^1(\tau^*) \to H^1(\tau^*, G(F))$ is injective 
   and there is a short exact sequence of pointed sets
   \begin{equation*}
       1 \longrightarrow\: \calH^1(\tau^*)\: \stackrel{\pi}{\longrightarrow}\: H^1(\tau^*, G(F)) \:\stackrel{\pf_\tau}{\longrightarrow} \:\{\pm 1\} \:\longrightarrow 1.
   \end{equation*}
\end{theorem}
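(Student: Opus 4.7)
The projection $\pi: \calH^1(\tau^*) \to H^1(\tau^*, G(F))$ sends a pair $(x, y)$ to its second component, so the task is to decide which global classes $y$ admit an adelic lift supported in $K_\infty K_f$ and whether such a lift is unique. The unifying invariant throughout will be the pfaffian, which by Prop.~\ref{prop:diagramSymplectic} is defined compatibly at every global, adelic, and local level. Surjectivity of $\pf_\tau$ on the right of the sequence is immediate from the matrix $\gamma_0 := \diag(-1, 1, \ldots, 1) \in G(F)$: its entries lie in $F$ so it is $\tau$-symmetric, $\nrd(\gamma_0) = 1$ places it in $\SL_\Lambda(F)$, and the formula in~\ref{par:pfaffianDiag} gives $\pf_\tau(\gamma_0) = -1$.

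\textbf{Image equals kernel.} The torsion-freeness of $\Gamma(\LA)$ forces $-I \notin \Gamma(\LA)$, hence $2 \notin \LA$; a short valuation argument then produces a prime $\LP_0 \mid \LA$ with $\nu_{\LP_0}(\LA) > \nu_{\LP_0}(2)$, so that $-1 \not\equiv 1 \pmod{\LP_0^{\nu_{\LP_0}(\LA)}}$. By Lemma~\ref{lem:H1finiteLocalSL} the cohomology $H^1(\tau^*, K_{\LP_0})$ is then trivial, so any $(x, y) \in \calH^1(\tau^*)$ satisfies $\pf_\tau(x_{\LP_0}) = 1$; since $x_{\LP_0}$ and $y_{\LP_0}$ have the same image in $H^1(\tau^*, G(F_{\LP_0}))$ and the global pfaffian of $y$ equals its local pfaffian under $F \hookrightarrow F_{\LP_0}$, this forces $\pf_\tau(y) = 1$. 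Conversely, given $y$ with $\pf_\tau(y) = 1$, Lemma~\ref{lem:H1nrdOnto} makes $y_v$ trivial in $H^1(\tau^*, G(F_v))$ at every $v \notin \Ram_\infty(D)$, while at $v \in \Ram_\infty(D)$ we have $K_v = G(F_v)$ so $y_v$ lifts tautologically. Defining $x = (x_v)$ with $x_v = 1$ for $v \notin \Ram_\infty(D)$ and $x_v = y_v$ otherwise gives a class $x \in H^1(\tau^*, K_\infty K_f)$ such that $(x, y) \in \calH^1(\tau^*)$.

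\textbf{Injectivity of $\pi$.} If $(x_1, y)$ and $(x_2, y)$ both lie in $\calH^1(\tau^*)$, then the local components $x_{1,v}$ and $x_{2,v}$ have the same image in $H^1(\tau^*, G(F_v))$ at every place. At finite $v$, Lemmas~\ref{lem:H1finiteLocalSL} and~\ref{lem:H1nrdOnto} describe both $H^1(\tau^*, K_v)$ and $H^1(\tau^*, G(F_v))$ by the compatible pfaffian, yielding $x_{1,v} = x_{2,v}$. At $v \in \Ram_\infty(D)$, the identification $K_v = G(F_v)$ trivialises the assertion. The main obstacle is the remaining archimedean places, where $K_v$ is a proper $\tau^*$-stable maximal compact subgroup of the non-compact group $G(F_v)$; to establish injectivity of $H^1(\tau^*, K_v) \to H^1(\tau^*, G(F_v))$ there, I would use a $\tau^*$-equivariant Cartan decomposition $G(F_v) = K_v \exp(\mathfrak{p}_v)$, exploiting that the $\exp(\mathfrak{p}_v)$-factor contributes no twisted cohomology, in order to adjust any element of $G(F_v)$ trivialising a $K_v$-cocycle so that it already lies in $K_v$.
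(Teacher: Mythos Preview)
Your overall strategy matches the paper's, with the pfaffian as the organising invariant, but there is one genuine error that appears twice.

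You assert that at a real ramified place $v \in \Ram_\infty(D)$ one has $K_v = G(F_v)$. This is false except when $n=1$: here $G(F_v) \cong \SL_n(\bbH)$, which for $n \geq 2$ is a non-compact real Lie group with maximal compact subgroup $\Sp(n)$. Consequently your converse construction (``$x_v = y_v$ lifts tautologically'') and your injectivity argument at these places (``the identification $K_v = G(F_v)$ trivialises the assertion'') both break down.

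The repair is exactly the Cartan-decomposition argument you already sketch for the split archimedean places: it applies uniformly to every archimedean place, ramified or not, and gives that $H^1(\tau^*, K_v) \to H^1(\tau^*, G(F_v))$ is bijective for all $v \in V_\infty$. The paper handles this by citing the general fact that $H^1(\tau^*, K_\infty) \to H^1(\tau^*, G_\infty)$ is bijective (An--Wang \cite{AnWang2008} or Rohlfs \cite[Lemma~1.4]{Rohlfs1981}). Once you have this bijection, the converse direction proceeds by choosing $c_\infty \in H^1(\tau^*, K_\infty)$ mapping to the image of $\gamma$ in $H^1(\tau^*, G_\infty)$, and pairing it with the trivial class at all finite places; your injectivity argument then goes through verbatim at every archimedean place.
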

\begin{proof}
   Consider the cohomology set $H^1(\tau^*, K_\infty K_f)$, which
   agrees with the direct product $H^1(\tau^*, K_\infty) \times H^1(\tau^*, K_f)$.
   The canonical map $H^1(\tau^*, K_\infty) \to H^1(\tau^*, G_\infty)$ is bijective (see \cite{AnWang2008} or Lemma 1.4 in \cite{Rohlfs1981}).
   Moreover, for every finite place $v \in V_f$ the group $K_v$ is of the form
   \begin{equation*}
       K_v(m) = \ker\bigl(G(\calO_v) \to G(\calO_v/\LP_v^m)\bigr)
   \end{equation*}
   for some integer $m$. It follows from Lemma \ref{lem:H1finiteLocalSL} that the inclusion $K_v \to G(F_v)$ induces
   an injection $H^1(\tau^*, K_v) \to H^1(\tau^*, G(F_v))$.
   Therefore the canonical map $H^1(\tau^*,K_\infty K_f) \to H^1(\tau^*, G(\bbA))$ is injective and we conclude
   that the projection $\pi: \calH^1(\tau^*) \to  H^1(\tau^*, G(F))$ is injective.
   
   Moreover, it follows from the considerations on diagonal matrices in \ref{par:pfaffianDiag} that
   the pfaffian $\pf_\tau: H^1(\tau^*, G(F)) \to \{\pm1\}$ is surjective.
   
   It remains to understand the image of $\pi$.
   Since $\Gamma(\LA)$ is (by assumption) torsion-free, we know that $-1$ is not congruent $1$ modulo $\LA$.
   In particular there is a prime ideal $\LP$ which divides $\LA$, say $e = \nu_\LP(\LA)$, 
   such that $1$ and $-1$ are not congruent modulo $\LP^e$.
   Let $v \in V_f$ be the finite place associated with $\LP$, then $K_v = K_v(e)$ and
   $H^1(\tau^*, K_v) = \{1\}$ by Lemma \ref{lem:H1finiteLocalSL}.
   Let $\gamma \in H^1(\tau^*, G(F))$ be in the image of $\pi$, say $(x, \gamma)$ is the inverse image
   in $\calH^1(\tau^*)$. Let $x_v$ be the projection of the class $x$ to $H^1(\tau^*, K_v)$. Since
   $x$ and $\gamma$ have the same image in $H^1(\tau^*, G(\bbA))$, we can deduce that
   $\pf_\tau(\gamma) = \pf_\tau(x_v) = 1$.

   Conversely, given $\gamma \in H^1(\tau^*, G(F))$ in the kernel of the pfaffian, then $\gamma$ lies in the image of $\pi$.
   Let $c_\infty \in H^1(\tau^*,K_\infty)$ be a cohomology class such that $c_\infty$ and $\gamma$
   define the same class in $H^1(\tau^*,G_\infty)$. Let $1_f$ denote the trivial class in $H^1(\tau^*,K_f)$, then the triple $(c_\infty, 1_f, \gamma)$ is a
   class in $\calH^1(\tau^*)$ which is mapped to $\gamma$ by $\pi$.
\end{proof}

\subsection{The fixed point groups}
Up to paragraph \ref{par:IntroCharThm} the number field $F$ is assumed to be  \emph{totally real}.

\begin{definition}
  Let $R$ be a commutative $\calO$-algebra (e.g.~$\calO_v$ or $F_v$).
  For a cocycle $\gamma \in Z^1(\tau^*, G(R))$ the $R$-group scheme $G(\gamma)$ of $\tau^*|\gamma$-fixed points is defined
  by
   \begin{equation*}
       G(\gamma)(C) := \{\:g \in G(C)\:|\: g = \up{\tau^*|\gamma}g\:\}
   \end{equation*}
   for any commutative $R$-algebra $C$.
   Recall that the $\gamma$-twisted $\tau^*$-action
   is given by $\up{\tau^*|\gamma}g = \gamma \up{\tau^*} g \gamma^{-1}$.
\end{definition}
We define the symplectic group $\Sp_n$ over $\bbZ$ by
 \begin{equation*}
      \Sp_n(R) :=  \{\:g\in \GL_{2n}(R)\:|\:g^{T}Jg = J\:\},
 \end{equation*}
for every commutative ring $R$, where $J$ is the standard symplectic matrix
\begin{equation*}
    J = \begin{pmatrix}
            0_n  & 1_n\\
            -1_n & 0_n
        \end{pmatrix}.
\end{equation*}
Note that in this notation $\Sp_n$ is of rank $n$, but consists of matrices of size $2n \times 2n$.

Given a cocycle $\gamma \in Z^1(\tau^*, G(\calO))$, we want to understand the associated group scheme $G(\gamma)$.
In particular we want to calculate the Euler characteristic of congruence subgroups of this group.
We start with some basic observations and afterwards we collect all the ingredients necessary 
for an application of the adelic Euler characteristic formula (Thm.~\ref{thm:adelicFormula}).

\begin{remark}
If $\gamma \in Z^1(\tau^*, G(\calO))$, then $G(\gamma) = G(\Lambda, \tau|\gamma)$ in the notation
of Section~\ref{sec:FixedPointGroups}. The reason for this identity is that $G(\Lambda, \tau|\gamma)$ is always a closed subscheme of $\SL_\Lambda$, 
i.e.~all elements have reduced norm one (see Remark \ref{rem:symplNrdOne}).
Here $\tau|\gamma$ is the $\gamma$-twisted involution on $A$ (cf.~Remark \ref{rem:twistingInvolutions}).
Recall that $\tau|\gamma$ is of symplectic type, and that twisting and the operation $*$ commute, i.e.~${(\tau|\gamma)^* = \tau^*|\gamma}$.
\end{remark}

\begin{lemma}\label{lem:FPGroupsAreSmooth}
For every $\gamma \in Z^1(\tau^*, G(\calO))$ the group scheme $G(\gamma)$ is smooth. 
\end{lemma}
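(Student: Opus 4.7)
The plan is to reduce this to Proposition \ref{prop:smoothFPGroups} via the twisting machinery already developed in Remark \ref{rem:twistingInvolutions}. Concretely, I would argue that $\gamma$ is a unit in $\Lambda$ that is fixed by $\tau$, so the twisted order $(\Lambda, \tau|\gamma)$ inherits $\tau$-smoothness, and then smoothness of the fixed point group scheme follows from the general result already established.

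First, I would unpack what it means for $\gamma \in Z^1(\tau^*, G(\calO))$ to be a cocycle. Since $\tau^* = \inv \circ \tau$ on $\GL_\Lambda$, the cocycle condition $\gamma \cdot \up{\tau^*}\gamma = 1$ translates to $\gamma = \up{\tau}\gamma$, so $\gamma \in \Sym(\Lambda, \tau)$. Moreover, $\gamma \in G(\calO) \subseteq \GL_\Lambda(\calO) = \Lambda^\times$, hence
\begin{equation*}
   \gamma \in \Sym(\Lambda, \tau) \cap \Lambda^\times.
\end{equation*}
Thus $\gamma$ is precisely the kind of element to which Remark \ref{rem:twistingInvolutions} applies.

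Next, by Lemma \ref{lem:orderIsNice} the order $\Lambda = M_n(\Lambda_D)$ is $\tau$-smooth. Applying Remark \ref{rem:twistingInvolutions} with $b = \gamma$ then tells us that $\Lambda$ is also $\tau|\gamma$-smooth. By the Remark preceding the lemma, $G(\gamma)$ coincides with the fixed point group scheme $G(\Lambda, \tau|\gamma)$ attached to the twisted (symplectic type) involution $\tau|\gamma$. Proposition \ref{prop:smoothFPGroups} then yields that $G(\Lambda, \tau|\gamma) = G(\gamma)$ is smooth, completing the argument.

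The only subtlety worth double-checking is the identification $G(\gamma) = G(\Lambda, \tau|\gamma)$: the former is defined inside $\SL_\Lambda$ while the latter is a priori a subgroup of $\GL_\Lambda$, but by Remark \ref{rem:symplNrdOne} every element of $G(\Lambda, \tau|\gamma)$ has reduced norm one, so the two schemes agree. Apart from this bookkeeping there is no real obstacle: all the substantive smoothness work has been done in Section \ref{sec:FixedPointGroups}, and the lemma is essentially a direct corollary.
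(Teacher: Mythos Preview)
Your proof is correct and follows exactly the same route as the paper: use Lemma \ref{lem:orderIsNice} for $\tau$-smoothness of $\Lambda$, transfer this to $\tau|\gamma$-smoothness via Remark \ref{rem:twistingInvolutions}, and conclude by Proposition \ref{prop:smoothFPGroups} together with the identification $G(\gamma) = G(\Lambda,\tau|\gamma)$ from Remark \ref{rem:symplNrdOne}. You have simply spelled out the bookkeeping (the cocycle condition and the $\SL_\Lambda$ versus $\GL_\Lambda$ issue) that the paper leaves implicit.
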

\begin{proof}
  By Lemma \ref{lem:orderIsNice} the order $\Lambda$ is $\tau$-smooth. 
  Remark \ref{rem:twistingInvolutions} implies further that $\Lambda$ is $(\tau|\gamma)$-smooth as well, and
  thus Prop.~\ref{prop:smoothFPGroups} yields that ${G(\gamma) = G(\Lambda, \tau|\gamma)}$ is smooth.
\end{proof}

\begin{lemma}\label{lem:EquivCyclesGiveIsoGroups}
 Let $R$ be a commutative $\calO$-algebra.
 Suppose the two cocycles $\gamma, \gamma'$ in $Z^1(\tau^*, G(R))$ define the same class in $H^1(\tau^*, \GL_\Lambda(R))$, then
 $G(\gamma)$ and $G(\gamma')$ are isomorphic as group schemes over $R$.
\end{lemma}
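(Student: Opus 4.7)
The plan is to produce an explicit isomorphism given by an inner automorphism of $\GL_\Lambda$.

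First, I would unwind what it means for $\gamma$ and $\gamma'$ to be cohomologous in $H^1(\tau^*, \GL_\Lambda(R))$: there exists $h \in \GL_\Lambda(R)$ such that $\gamma' = h \gamma \up{\tau^*}h^{-1}$ (Serre's coboundary relation, up to replacing $h$ by its inverse). Conjugation by $h$ defines a morphism of $R$-schemes $\inn(h): \GL_\Lambda \to \GL_\Lambda$ with inverse $\inn(h^{-1})$, hence an automorphism of $\GL_\Lambda$ over $R$. Because the reduced norm is conjugation-invariant, this automorphism restricts to an $R$-automorphism of the closed subscheme $G = \SL_\Lambda$.

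The heart of the proof is then a direct verification that $\inn(h)$ intertwines the $\gamma$-twisted and $\gamma'$-twisted $\tau^*$-actions. Concretely, I would check that for every commutative $R$-algebra $C$ and every $g \in G(C)$,
\begin{equation*}
   \up{\tau^*|\gamma'}\bigl(hgh^{-1}\bigr) \:=\: h \cdot \up{\tau^*|\gamma}g \cdot h^{-1}.
\end{equation*}
Expanding the left-hand side using $\gamma' = h\gamma\up{\tau^*}h^{-1}$ and the fact that $\tau^*$ is a group homomorphism (since $\tau^*(ab) = \tau(ab)^{-1} = \tau^*(a)\tau^*(b)$), the two factors $\up{\tau^*}h^{-1}$ and $\up{\tau^*}h$ cancel in the middle, leaving exactly $h \gamma \up{\tau^*}g \gamma^{-1} h^{-1}$, which is the right-hand side. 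This is the one step requiring care, but it is purely formal.

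From the intertwining property it follows that $\inn(h)$ carries the fixed-point subscheme of $\up{\tau^*|\gamma}$ isomorphically onto the fixed-point subscheme of $\up{\tau^*|\gamma'}$, i.e.\ it induces an isomorphism $G(\gamma) \isomorph G(\gamma')$ of group schemes over $R$. The main (and only mild) obstacle is to fix conventions consistently between the coboundary relation for $\tau^*$ and the formula $\up{\tau^*|\gamma}g = \gamma \up{\tau^*}g \gamma^{-1}$ for the twisted action, after which the computation is immediate.
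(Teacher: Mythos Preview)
Your proposal is correct and essentially identical to the paper's proof: the paper also picks $c \in \GL_\Lambda(R)$ with $\gamma' = c\gamma\up{\tau}c$ (which is your relation $\gamma' = h\gamma\up{\tau^*}h^{-1}$ rewritten, since $\up{\tau^*}h^{-1} = \up{\tau}h$) and checks directly that $g \mapsto cgc^{-1}$ carries $G(\gamma)$ isomorphically onto $G(\gamma')$ by the same cancellation you describe. The only cosmetic difference is that the paper verifies the fixed-point condition directly for $g \in G(\gamma)(C)$, whereas you phrase it as an intertwining identity valid for all $g \in G(C)$ and then restrict; the underlying computation is the same.
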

\begin{proof}
   There is $c \in \GL_\Lambda(R)$ which satisfies $\gamma' = c\gamma\up{\tau}c$.
   We define a morphism of group schemes
    $f: G(\gamma) \to G(\gamma')$ by
   \begin{equation*}
      f_C: g \mapsto c g c^{-1}
   \end{equation*}
   for every commutative $R$-algebra $C$ and all $g \in G(\gamma)(C)$.
   This map is well-defined:
   \begin{equation*}
       \up{\tau^*|\gamma'}(cgc^{-1}) = \gamma' \up{\tau^*}c \up{\tau^*}g \up{\tau^*}c^{-1} \gamma'^{-1} 
          = c \gamma \up{\tau^*}g \gamma^{-1} c^{-1} = c g c^{-1}.
   \end{equation*}
   The inverse map of $f$ is obviously given by $g \mapsto c^{-1}g c$, thus $f$ is an isomorphism.
\end{proof}

\begin{corollary}\label{cor:BasicStructureFPGroups}
   Let $\gamma \in Z^1(\tau^*, G(\calO))$ be a cocycle.
   Let $R$ be a commutative \mbox{$\calO$-algebra} with $H^1(\tau^*, \GL_\Lambda(R)) = \{1\}$,
   there is an isomorphism of $R$-group schemes
   \begin{equation*}
        G(\gamma)\times_\calO R \stackrel{\simeq}{\longrightarrow} G(1) \times_\calO R.
   \end{equation*}
   In particular, this holds if $R = \calO_v$ for $v \in V_f$ (see \ref{cor:H1localfiniteGL}).

   Moreover, if $k$ is a splitting field of $D$, then
   $G(\gamma)\times_\calO k$ is isomorphic to the symplectic group $\Sp_n\times_\bbZ k$ defined over $k$.
\end{corollary}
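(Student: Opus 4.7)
The plan is to derive both assertions from Lemma \ref{lem:EquivCyclesGiveIsoGroups}, combined with an explicit identification in the split case.

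For the first assertion, view $\gamma \in Z^1(\tau^*, G(\calO))$ as a cocycle in $Z^1(\tau^*, G(R))$ via base change along $\calO \to R$, and push it to a class in $H^1(\tau^*, \GL_\Lambda(R))$ through the inclusion $j : \SL_\Lambda \hookrightarrow \GL_\Lambda$. By hypothesis this class is trivial, so $\gamma$ and the trivial cocycle $1$ --- both lying in $Z^1(\tau^*, G(R))$ --- become cohomologous in $\GL_\Lambda(R)$. Lemma \ref{lem:EquivCyclesGiveIsoGroups} then directly produces an isomorphism of $R$-group schemes $G(\gamma) \times_\calO R \stackrel{\simeq}{\longrightarrow} G(1) \times_\calO R$. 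For the ``in particular'' clause with $R = \calO_v$ and $v$ a finite place, the hypothesis is supplied by Corollary \ref{cor:H1localfiniteGL}.

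For the second assertion, let $k$ be a splitting field of $D$, viewed as an $\calO$-algebra via $\calO \subseteq F \subseteq k$. I will first apply the first assertion with $R = k$, which requires showing $H^1(\tau^*, \GL_\Lambda(k)) = \{1\}$. Fix a splitting $\varphi : A \otimes_F k \cong M_{2n}(k)$ under which $\tau$ corresponds to $X \mapsto a X^T a^{-1}$ for some non-degenerate skew-symmetric $a \in M_{2n}(k)$, as furnished by Definition \ref{def:symplecticType}. A cocycle in $Z^1(\tau^*, \GL_\Lambda(k)) = \Sym(\Lambda\otimes_\calO k, \tau) \cap \GL_\Lambda(k)$ transports via $\varphi$ to an invertible $\tau$-symmetric element of $M_{2n}(k)$, which is the same datum as a non-degenerate skew-symmetric form on $k^{2n}$; since any two such forms are congruent, any two cocycles are cohomologous, so the cohomology set is trivial. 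Hence the first part yields $G(\gamma) \times_\calO k \cong G(1) \times_\calO k$.

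It remains to identify $G(1) \times_\calO k$ with $\Sp_n \times_\bbZ k$. Through $\varphi$, the group scheme $G(1) \times_\calO k$ becomes the stabilizer of the form $a$: for any $k$-algebra $C$,
\begin{equation*}
G(1)(C) = \{ X \in \GL_{2n}(C) : a X^T a^{-1} X = 1 \} = \{ X \in \GL_{2n}(C) : X^T a X = a \}.
\end{equation*}
Choosing $P \in \GL_{2n}(k)$ with $P^T a P = J$ (possible since all non-degenerate skew-symmetric forms on $k^{2n}$ are congruent), conjugation by $P$ delivers the isomorphism with $\Sp_n \times_\bbZ k$. The main content is the bookkeeping of base changes and the explicit shape of $\tau$ over a splitting field; the argument uses no tools beyond Definition \ref{def:symplecticType}, Lemma \ref{lem:EquivCyclesGiveIsoGroups}, and the classification of non-degenerate skew-symmetric forms over a field.
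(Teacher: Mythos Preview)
Your proof is correct and follows essentially the same approach as the paper. The paper's proof is terser: for the first part it simply cites Lemma~\ref{lem:EquivCyclesGiveIsoGroups}, and for the second it observes that one may choose a splitting $\varphi: A\otimes k \to M_{2n}(k)$ making the (twisted) symplectic involution correspond to $J(\cdot)^T J^{-1}$, which directly identifies the fixed-point group with $\Sp_n$. Your route through the first assertion---verifying $H^1(\tau^*,\GL_\Lambda(k))=\{1\}$ via the congruence of non-degenerate skew-symmetric forms, then reducing $G(\gamma)$ to $G(1)$ before identifying with $\Sp_n$---unpacks the same content and is a perfectly good way to organize it.
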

\begin{proof}
  The first part follows immediately from Lemma \ref{lem:EquivCyclesGiveIsoGroups}.
  For the second assertion note that we can choose a splitting $\varphi: A \otimes k \to M_{2n}(k)$ 
  such that $\varphi(\tau(x))$ equals $J \varphi(x)^T J^{-1}$, 
  where $J$ denotes the standard symplectic matrix.
\end{proof}

\subsubsection{The associated real Lie groups}
 Let $\gamma \in Z^1(\tau^*, G(\calO))$ be a cocycle. 
  Consider the real Lie group 
  \begin{equation*}
      G(\gamma)_\infty = \prod_{v \in V_\infty} G(\gamma)(F_v)
  \end{equation*}
  associated with the group $G(\gamma)$.
  
\begin{lemma}\label{lem:RealRamifiedFPGroups}
   Let $\gamma \in Z^1(\tau^*, G(\calO))$ be a cocycle and let $v \in \Ram_\infty(D)$ be a real ramified place.
   If the class of $\gamma$ in $H^1(\tau^*,G(F_v))$ has signature $(p,q)$, then
   there is an isomorphism of real Lie groups
    \begin{equation*}
        G(\gamma)(F_v) \isomorph \Sp(p,q).
    \end{equation*}
   Here $\Sp(p,q)$ is the real Lie group defined by
   \begin{equation*}
       \Sp(p,q) := \{\:g \in \GL_n(\bbH)\:|\:\conj{g}^T I_{p,q} g = I_{p,q}\:\}.
   \end{equation*}
\end{lemma}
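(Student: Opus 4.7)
The plan is to reduce to the standard representative $\gamma = I_{p,q}$ via the local classification of non-abelian cohomology from the previous subsection, and then identify the twisted fixed point group with $\Sp(p,q)$ by a direct matrix manipulation. At the ramified archimedean place $v$ we have $D_v \cong \bbH$, so $A \otimes_F F_v \cong M_n(\bbH)$ and the involution $\tau$ becomes the quaternionic conjugate-transpose $g \mapsto \conj{g}^T$. The matrix $I_{p,q}$ is fixed by $\tau$ (being real and diagonal) and satisfies $\nrd(I_{p,q}) = 1$, so it lies in $Z^1(\tau^*, \SL_\Lambda(F_v))$; by Proposition \ref{prop:H1GLlocalFields} and Definition \ref{def:Signature}, it represents the class of signature $(p,q)$ in $H^1(\tau^*, \GL_\Lambda(F_v))$.

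The reduction step runs as follows. Lemma \ref{lem:H1localSL} tells me that $j_*\colon H^1(\tau^*, \SL_\Lambda(F_v)) \to H^1(\tau^*, \GL_\Lambda(F_v))$ is bijective at a real ramified place, so the two cocycles $\gamma$ and $I_{p,q}$ define the same class in $H^1(\tau^*, \GL_\Lambda(F_v))$. Lemma \ref{lem:EquivCyclesGiveIsoGroups} then gives an isomorphism of $F_v$-group schemes $G(\gamma) \times_\calO F_v \cong G(I_{p,q})$, and in particular an isomorphism of real Lie groups $G(\gamma)(F_v) \cong G(I_{p,q})(F_v)$.

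For the identification of $G(I_{p,q})(F_v)$ with $\Sp(p,q)$, I would unwind the twisted fixed point condition $g = I_{p,q}\,\tau^*(g)\,I_{p,q}^{-1}$ using $\tau^*(g) = \tau(g)^{-1}$ and $I_{p,q}^2 = 1$, arriving at $g I_{p,q} \conj{g}^T = I_{p,q}$. Taking inverses of both sides — and using that $\tau$ reverses the order of multiplication and that $I_{p,q}$ is self-inverse — transforms this into the equivalent equation $\conj{g}^T I_{p,q} g = I_{p,q}$, which is the defining relation of $\Sp(p,q)$. Hence $G(I_{p,q})(F_v) = \Sp(p,q) \cap \SL_n(\bbH)$. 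Taking reduced norms of both sides yields $\nrd(g)^2 = 1$; since the reduced norm on $\GL_n(\bbH)$ is strictly positive (Hasse-Schilling-Maass at a real place ramified in $D$, see (33.4) in \cite{Reiner2003}), the condition $\nrd(g) = 1$ holds automatically and the intersection is all of $\Sp(p,q)$.

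The conceptual content is entirely packaged in Lemmata \ref{lem:H1localSL} and \ref{lem:EquivCyclesGiveIsoGroups}; the rest is formal matrix algebra. I do not foresee a serious obstacle here — the only point requiring genuine care is the bookkeeping between the $\GL$-cohomology, where the signature lives and the classification is explicit, and the $\SL$-cohomology, which is what actually appears in the statement, and this is precisely the content of Lemma \ref{lem:H1localSL}.
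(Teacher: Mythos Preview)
Your proposal is correct and follows essentially the same route as the paper: the paper's proof is the two-line ``This follows from Lemma~\ref{lem:EquivCyclesGiveIsoGroups} and the description of the cohomology set $H^1(\tau^*,\GL_\Lambda(F_v))$ in Prop.~\ref{prop:H1GLlocalFields}'', and you have unpacked precisely this. One small simplification: you do not actually need Lemma~\ref{lem:H1localSL} here, since the signature is by Definition~\ref{def:Signature} a notion in $\GL$-cohomology, so the hypothesis already says that $\gamma$ and $I_{p,q}$ are $\GL_\Lambda(F_v)$-cohomologous, which is exactly the input to Lemma~\ref{lem:EquivCyclesGiveIsoGroups}; likewise the reduced-norm-one condition on $\Sp(p,q)$ is already contained in Remark~\ref{rem:symplNrdOne}.
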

\begin{proof}
   This follows from Lemma \ref{lem:EquivCyclesGiveIsoGroups} 
   and the description of the cohomology set $H^1(\tau^*,\GL_\Lambda(F_v))$ in Prop.~\ref{prop:H1GLlocalFields}.
\end{proof}

  For a real ramified place $v \in \Ram_\infty(D)$ let $(p_v,q_v)$ denote the local signature of
  the cohomology class of $\gamma$ in $H^1(\tau^*,G(F_v))$.
  It follows from Corollary~\ref{cor:BasicStructureFPGroups} and Lemma~\ref{lem:RealRamifiedFPGroups}
  that there is an isomorphism of real Lie groups
  \begin{equation*}
         G(\gamma)_\infty \isomorph \Sp_n(\bbR)^s \times \prod_{v \in \Ram_\infty(D)}\Sp(p_v,q_v).
  \end{equation*}
    Here $s$ denotes the number of real places of $F$ which split $D$.
    Note that $G(\gamma)_\infty$ is connected and semisimple.
  The real Lie algebra $\LG(\gamma)_\infty$ of $G(\gamma)_\infty$ is isomorphic to
  \begin{equation*}
      \LG(\gamma)_\infty \cong \Lsp(n,\bbR)^s \oplus \bigoplus_{v\in \Ram_\infty(D)} \Lsp(p_v,q_v).
  \end{equation*}
  
  Recall that every maximal compact subgroup of the real Lie group $\Sp_n(\bbR)$ 
  is isomorphic to the unitary group $\U(n)$.

  Consider the group $\Sp(n) := \Sp(n,0)$. 
  One can check that this is a compact connected semisimple real 
  Lie group (see \cite[p.~111]{Knapp2002}).
  Moreover, it is a maximal compact subgroup of the special linear group $\SL_n(\bbH)$.

  Let $p,q \geq 0$ be integers with $p+q = n$. 
  The Lie group $\Sp(p,q)$ is connected and semisimple \cite[Prop.~1.145]{Knapp2002}, and
  the compact subgroup $\Sp(p)\times\Sp(q)$ is a maximal compact subgroup.
  Given any maximal compact subgroup $K(\gamma)_\infty \subseteq G(\gamma)_\infty$, we obtain an isomorphism of
  Lie groups
  \begin{equation*}
       K(\gamma)_\infty  \isomorph \U(n)^s \times \prod_{v \in \Ram_\infty(D)} \Sp(p_v) \times \Sp(q_v).
  \end{equation*}
  
\subsubsection{The symmetric space}\label{par:symSpace}
    Consider the associated Riemannian symmetric space 
    $X(\gamma) := \bsl{K(\gamma)_\infty}{G(\gamma)_\infty}$.
    We have $\dim G(\gamma)  = n(2n+1)$ and thus 
  \begin{equation*}
             \dim G(\gamma)_\infty = n(2n+1)[F:\bbQ].
  \end{equation*}
          The dimension of the unitary group $\U(n)$ is $n^2$ and consequently
  \begin{equation*}
           \dim K(\gamma)_\infty = s n^{2}  + \sum_{v \in \Ram_\infty(D)} p_v(2p_v+1)+q_v(2q_v+1).
  \end{equation*}
     Subtraction of both dimensions yields
  \begin{equation*}
   \dim X(\gamma) = s n(n+1) + \sum_{v \in \Ram_\infty(D)} 4p_vq_v,
  \end{equation*}
     which is obviously an \emph{even} number.
 
\subsubsection{Lie algebras and complexifications}\label{par:LieAlgebrasComplex}
We complexify the Lie algebra $\LG(\gamma)_\infty$ and we obtain an isomorphism
\begin{equation*}
  \LG(\gamma)_\infty  \otimes_\bbR \bbC \cong \Lsp(n, \bbC)^{[F:\bbQ]}.
\end{equation*}
 The rank of this complex semisimple Lie algebra is ${\rk(\LG(\gamma)_{\infty,\bbC})  = n [F:\bbQ]}$.
 Let $\LK(\gamma)_\infty$ denote the Lie algebra of the maximal compact subgroup $K(\gamma)_\infty$.
 The complexification of this Lie algebra is isomorphic to
 \begin{equation*}
    \LK(\gamma)_\infty \otimes_\bbR \bbC \cong \Lgl(n,\bbC)^s \oplus \bigoplus_{v\in \Ram_\infty(D)}\Lsp(p_v,\bbC) \oplus \Lsp(q_v,\bbC).
 \end{equation*}
  The rank of $\LK(\gamma)_{\infty,\bbC}$ is $s n + \sum_{v \in \Ram_\infty(D)} p_v + q_v = n  [F:\bbQ]$.
  Thus the complexified Lie algebras $\LG(\gamma)_{\infty,\bbC}$ and $\LK(\gamma)_{\infty,\bbC}$ have equal rank.
  The Weyl groups of these complex reductive Lie algebras are well-known, in particular we get
  \begin{align*}
      |W(\LG(\gamma)_{\infty,\bbC})| &= (2^n n!)^{[F:\bbQ]}, \:\text{ and } \\
      |W(\LK(\gamma)_{\infty,\bbC})| &=  (n!)^s \prod_{v \in \Ram_\infty(D)} 2^{p_v} p_v! \cdot 2^{q_v} q_v!
  \end{align*}
  as can be found in \cite[p.~66]{Humphreys1972}. The quotient of the cardinalities of the two Weyl groups is given by
  \begin{equation*}
    \frac{ |W(\LG(\gamma)_{\infty,\bbC})|}{|W(\LK(\gamma)_{\infty,\bbC})|} = 2^{ns} \prod_{v \in \Ram_\infty(D)} \binom{n}{p_v}.
  \end{equation*}
  
\begin{remark}\label{rem:GgammaSemisimple}
 The linear algebraic $F$-group $G(\gamma)\times_\calO F$ is an inner form of the symplectic group $\Sp_n$,
 in particular it is a semisimple and simply connected group.
 Further this implies that the Tamagawa number $\tau(G(\gamma))$ is equal to one (see~\cite{Kottwitz1988}).
\end{remark}

\subsubsection{The metric form $B$}\label{par:metricFormB}
Recall that the Lie algebra of $G(\gamma)$ is a functor
$\Lie(G(\gamma))$ which assigns to a commutative $\calO$-algebra $C$ the $C$-Lie algebra
\begin{equation*}
    \Lie(G(\gamma))(C) = \{\: x \in (\Lambda \otimes_\calO C)^\times\:|\: (\tau|\gamma)(x) = -x\:\}.
\end{equation*}
For simplicity we write $\LG(\gamma)_C$ instead of  $\Lie(G(\gamma))(C)$.

Consider the non-degenerate bilinear form $B: \LG(\gamma)_F \times \LG(\gamma)_F \to F$
defined by $B(x,y) := -\frac{1}{2}\trd_A(xy)$.
Let $\iota: F \to \bbC$ be an embedding of $F$ into the field of complex numbers.
The central simple algebra $A = M_n(D)$ splits over $\bbC$ and we can choose a splitting $A \to M_{2n}(\bbC)$ 
such that $\tau|\gamma$ is the standard symplectic involution.
Via this splitting the Lie algebra $\LG(\gamma)_\bbC$ is isomorphic 
to the complex semisimple Lie algebra $\Lsp(n,\bbC)$. 

\begin{proposition}\label{prop:VolumeSpn}
  Consider the compact Lie group $\Sp(n)$ and its Lie algebra 
  \begin{equation*}
     \Lsp(n) := \{\:x \in M_n(\bbH) \:|\: \conj{x}^T + x = 0\:\}.
  \end{equation*}
  Let $B$ be the positive definite $\bbR$-bilinear form $B: \Lsp(n) \times \Lsp(n) \to \bbR$ defined
  by ${B(x,y) := -\frac{1}{2}\trd(xy)}$. With respect to the right invariant Riemann metric induced by $B$, the 
  group $\Sp(n)$ has the volume 
  \begin{equation*}
      \vol_B(\Sp(n)) = \prod_{j=1}^n \frac{(2\pi)^{2j}}{2\cdot(2j-1)!}.
  \end{equation*}
\end{proposition}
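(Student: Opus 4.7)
The plan is to use the principal $\Sp(n{-}1)$-bundle
$\Sp(n) \to S^{4n-1}$
(sending $g \mapsto g\cdot e_1$, where $\Sp(n-1)$ sits as the stabilizer of the first basis vector $e_1 \in \bbH^n$) and proceed recursively via
\begin{equation*}
    \vol_B(\Sp(n)) \:=\: \vol_{B_s}(S^{4n-1}) \cdot \vol_B(\Sp(n-1)),
\end{equation*}
where $B_s$ denotes the metric on $S^{4n-1}$ induced from $B$ by the Riemannian submersion. The bulk of the work is to identify $B_s$ with a specific rescaling of the round metric and compute the rescaling factor.

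First, I would unwind the definition of $B$ on $\Lsp(n)$. Using that for $x\in M_n(\bbH)$ the reduced trace is $\trd(x)=\sum_i(x_{ii}+\conj{x_{ii}})$, and exploiting the skew-Hermitian condition $x_{ji}=-\conj{x_{ij}}$, a direct computation gives
\begin{equation*}
    B(x,x) \:=\: -\tfrac12 \trd(x^2) \:=\: \sum_{i,j=1}^n |x_{ij}|^2,
\end{equation*}
where $|q|^2=q\conj{q}$. In particular $B$ is positive definite; it is $\Ad$-invariant since $\trd$ is conjugation-invariant, so the right-invariant and left-invariant extensions coincide.

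Second, I would describe the horizontal space at the identity and read off $B_s$ at $e_1$. The derivative of $g\mapsto g\cdot e_1$ sends $x\in\Lsp(n)$ to the first column of $x$, with kernel the embedded $\Lsp(n-1)$. The $B$-orthogonal complement consists of matrices of the form
\begin{equation*}
   x_{a,v} \:=\: \begin{pmatrix} a & -\conj{v}^T \\ v & 0 \end{pmatrix}, \qquad a \in \operatorname{Im}\bbH,\; v \in \bbH^{n-1},
\end{equation*}
which maps to the tangent vector $(a,v)\in T_{e_1}S^{4n-1} = \operatorname{Im}\bbH \oplus \bbH^{n-1}$. The formula from the first step yields $B(x_{a,v},x_{a,v}) = |a|^2 + 2|v|^2$, whereas the round metric gives $|a|^2+|v|^2$.

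Third, since both $B_s$ and the round metric on $S^{4n-1}$ are $\Sp(n)$-invariant, it suffices to compare them at a single point. By Schur's lemma applied to the $\Sp(n-1)$-decomposition $T_{e_1}S^{4n-1} = \operatorname{Im}\bbH \oplus \bbH^{n-1}$ (the first summand is trivial, the second is the $\bbH$-standard representation, hence irreducible over $\bbR$), every invariant inner product is determined by a scalar on each summand. The calculation above shows that $B_s$ agrees with the round metric on $\operatorname{Im}\bbH$ and rescales $\bbH^{n-1}$ by a factor of~$\sqrt{2}$. Hence the volume form of $B_s$ is $2^{2(n-1)}$ times that of the round metric, so using the classical $\vol(S^{4n-1}) = 2\pi^{2n}/(2n-1)!$ we obtain
\begin{equation*}
    \vol_{B_s}(S^{4n-1}) \:=\: 2^{2n-2} \cdot \frac{2\pi^{2n}}{(2n-1)!} \:=\: \frac{(2\pi)^{2n}}{2\,(2n-1)!}.
\end{equation*}

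An induction on $n$, starting from the trivial group $\Sp(0)$, then completes the proof. The only nontrivial step is the metric comparison in the third part; everything else is routine.
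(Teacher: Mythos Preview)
Your argument is correct and complete. The computation of $B(x,x)=\sum_{i,j}|x_{ij}|^2$, the identification of the horizontal space, the scaling factor $2^{2(n-1)}$ on the volume form of $S^{4n-1}$, and the resulting product formula all check out; the bi-invariance of $B$ guarantees that the projection $\Sp(n)\to\Sp(n)/\Sp(n-1)\cong S^{4n-1}$ is a Riemannian submersion with mutually isometric fibres, so the coarea/Fubini step is justified.

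The paper takes a different and much shorter route: it observes that the Killing form $\beta$ on $\Lsp(n,\bbC)$ satisfies $\beta=-4(n+1)B$, hence $\vol_\beta(\Sp(n))=(4(n+1))^{n(2n+1)/2}\vol_B(\Sp(n))$, and then simply quotes Ono's closed formula for the volume of a compact simple Lie group with respect to the Killing form. Your approach trades that citation for an explicit inductive computation via the sphere fibrations; it is more self-contained and elementary (requiring only the classical volume of round spheres), while the paper's proof is a two-line reduction to a known result. Either is perfectly acceptable here.
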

\begin{proof}
  The form $B$, extended $\bbC$-linear to $\Lsp(n,\bbC)$, is
  given by $B(x,y) = -\frac{1}{2}\Tr(xy)$.
  Recall that the Killing form $\beta$ on $\Lsp(n,\bbC)$ is the form $\beta(x,y) = (2n+2)\Tr(xy)$ (cf.~III, §8 in \cite{Helgason1978}) and
  hence $\beta = -4(n+1)B$. We conclude 
  \begin{equation*}
  \vol_\beta(\Sp(n)) = \bigl(4(n+1)\bigr)^{\frac{n(2n+1)}{2}}\vol_B(\Sp(n)). 
  \end{equation*}
  The assertion follows from Ono's formula for the volume of a compact Lie group with respect to the Killing form (see (3.4.9) in \cite{Ono1966b}), which yields
  \begin{equation*}
      \vol_\beta(\Sp(n)) = \bigl(4(n+1)\bigr)^{\frac{n(2n+1)}{2}}\prod_{j=1}^n \frac{(2\pi)^{2j}}{2\cdot(2j-1)!}.\qedhere
  \end{equation*}
\end{proof}

\subsubsection{The modulus factor}
Consider the $F$-bilinear form $B: \LG(\gamma)_F \times \LG(\gamma)_F \to F$
defined by $B(x,y) := -\frac{1}{2}\trd_A(xy)$.
In this paragraph we will calculate the global modulus factor ${\mf(B) = \prod_{v \in V_f} \mf(B)_v}$ (cf.~\ref{par:modulusFactor}).
Note that $\Lambda$ is in general not a free $\calO$-module, therefore we have to work locally.

We start with the finite places $v \in V_f$ where $D$ splits. The main observation is:
We can assume that $\Lambda \otimes_\calO \calO_v = M_{2n}(\calO_v)$ and that $\tau|\gamma$ is the standard symplectic involution.
This follows from the next Lemma.

\begin{lemma}
   Let $R$ be a complete discrete valuation ring with field of fractions $k$ of characteristic $\chr(k)\neq 2$.
   Let $\sigma$ be an involution of symplectic type on $M_{2n}(k)$ and let $\Lambda \subseteq M_{2n}(k)$ be 
   a maximal $R$-order which is $\sigma$-stable.
   
   There is an element $g \in \GL_{2n}(k)$ such that 
   \begin{enumerate}
    \item  $g \Lambda g^{-1} = M_{2n}(R)$, and 
    \item  $g \sigma(x) g^{-1} = J (gxg^{-1})^T J^{-1}$, where $J$ is the standard symplectic matrix.
   \end{enumerate}
\end{lemma}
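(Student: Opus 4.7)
The plan is to reduce the problem in two stages: first normalize the order, then normalize the involution.

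First I would invoke the standard fact that over a complete discrete valuation ring $R$ any two maximal $R$-orders in $M_{2n}(k)$ are conjugate (see (17.3) in \cite{Reiner2003}); so there exists $h \in \GL_{2n}(k)$ with $h \Lambda h^{-1} = M_{2n}(R)$. Replacing $\sigma$ by the conjugated involution $\sigma'(y) := h \sigma(h^{-1}yh)h^{-1}$, the problem reduces to the case $\Lambda = M_{2n}(R)$: if I find $g'$ doing the job for $\sigma'$, then $g := g'h$ works for $\sigma$. So from now on assume $\Lambda = M_{2n}(R)$.

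Next I would write $\sigma$ via its Skolem--Noether representative. Since $\sigma$ is of symplectic type on $M_{2n}(k)$, the composition of $\sigma$ with matrix transposition is an inner $k$-algebra automorphism, so there exists $a \in \GL_{2n}(k)$ with $\sigma(x) = a x^T a^{-1}$ for all $x \in M_{2n}(k)$, and symplectic type forces $a^T = -a$ (up to scalar, hence exactly after rescaling). The key observation is that $\sigma$-stability of $\Lambda = M_{2n}(R)$ forces $a M_{2n}(R) a^{-1} = M_{2n}(R)$ (using $M_{2n}(R)^T = M_{2n}(R)$), so $a$ normalizes $M_{2n}(R)$. The normalizer of $M_{2n}(R)$ in $\GL_{2n}(k)$ is $k^\times \GL_{2n}(R)$, hence after multiplying $a$ by a suitable power of a uniformizer (which preserves $\sigma$ and the antisymmetry condition) I may assume $a \in \GL_{2n}(R)$ with $a^T = -a$. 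In particular $a$ defines a non-degenerate alternating $R$-bilinear form on $R^{2n}$ which is unimodular (its determinant is a unit).

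The final step is to diagonalize this alternating form over $R$: I claim there is $g \in \GL_{2n}(R)$ with $g a g^T = J$. This is the standard symplectic basis argument adapted to a local ring: pick any $v_1 \in R^{2n}$ whose reduction modulo the maximal ideal is non-zero; unimodularity of $a$ produces $v_{n+1} \in R^{2n}$ with $v_1^T a v_{n+1} = 1$; the $R$-span of $v_1, v_{n+1}$ splits off as an orthogonal direct summand (its orthogonal complement with respect to $a$ is a free $R$-module on which $a$ restricts to a unimodular alternating form), and one iterates. Assembling the resulting symplectic basis into the columns of $g^{-1}$ yields $g \in \GL_{2n}(R)$ with $g a g^T = J$. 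A direct substitution then gives $g \sigma(x) g^{-1} = g a x^T a^{-1} g^{-1} = J g^{-T} x^T g^{T} J^{-1} = J(gxg^{-1})^T J^{-1}$, and $g \in \GL_{2n}(R)$ clearly satisfies $g \Lambda g^{-1} = M_{2n}(R)$.

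The main obstacle in this plan is the symplectic basis step over $R$; however, completeness of $R$ and the unimodularity of $a$ (together with $\chr(k) \neq 2$, which guarantees that $a^T = -a$ has zero diagonal and that $2$ is a unit where needed) make the inductive construction work without any obstruction, so no deep input beyond elementary local-ring commutative algebra is required.
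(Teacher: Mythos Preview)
Your proposal is correct and follows essentially the same route as the paper's proof: conjugate the order to $M_{2n}(R)$ via Reiner (17.3), express $\sigma$ through a skew-symmetric matrix, use $\sigma$-stability to force that matrix into $\GL_{2n}(R)$ after scaling, and then bring it to the standard $J$ by the uniqueness of unimodular alternating forms over the local ring. You spell out the symplectic-basis step more explicitly than the paper, which simply invokes that there is a unique regular symplectic form over a complete discrete valuation ring in characteristic $\neq 2$.
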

\begin{proof}
   It follows from Theorem (17.3) in \cite{Reiner2003} that there
   is an invertible matrix ${a \in \GL_{2n}(k)}$ such that $a \Lambda a^{-1} = M_{2n}(R)$.
   Moreover, $\sigma$ is an involution of symplectic type and we can consider $\inn(a): M_{2n}(k) \to M_{2n}(k)$ 
   as a splitting of the central simple $k$-algebra $M_{2n}(k)$. There is a matrix $h \in \GL_{2n}(k)$ such that $h^T = -h$
   and ${\inn(a)(\sigma(x))= h (\inn(a)(x))^T h^{-1}}$ for every $x \in M_{2n}(k)$. 
   
   Using that $\Lambda$ is $\sigma$-stable, we see that $hM_{2n}(R)h^{-1} = M_{2n}(R)$.
   After multiplication with some power of the prime element in $R$, we can assume $h \in \GL_{2n}(R)$.
   On a free module over a complete discrete valuation ring, there is only one regular symplectic form up to isogeny ($\chr(k)\neq 2$!),
   this means that there is $b \in \GL_{2n}(R)$ such that $bhb^T = J$.
   Finally, we define $g := ba$ and observe
   \begin{equation*}
       g\sigma(x)g^{-1} = bh(axa^{-1})^Th^{-1}b^{-1} = J (b^{-1})^T (axa^{-1})^T b^T J^{-1} = J (gxg^{-1})^T J^{-1}
   \end{equation*}
    for every $x \in M_{2n}(k)$.
\end{proof}

\begin{corollary}\label{cor:SplitSp}
  Let $\gamma \in Z^1(\tau^*, G(\calO))$ be a cocycle and
  let $v \in V_f$ be a finite place of $F$ which splits $D$. 
  There is an isomorphism of group schemes over $\calO_v$:
  \begin{equation*}
      G(\gamma)\times_\calO \calO_v \:\isomorph\: \Sp_n \times_{\bbZ} \calO_v.
  \end{equation*}
\end{corollary}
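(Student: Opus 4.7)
The plan is to reduce this to the preceding Lemma, where all the work has been done, and then identify the $\sigma_0^*$-fixed points of $\GL_{2n}$ with the symplectic group $\Sp_n$.

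First I would observe that since $\gamma \in G(\calO) \subseteq \Lambda^\times$, the order $\Lambda$ is $(\tau|\gamma)$-stable and, by Remark \ref{rem:twistingInvolutions}, the twisted map $\tau|\gamma$ is again an involution of symplectic type on $A$. Localising at $v$ and using that $v$ splits $D$, the central simple $F_v$-algebra $A \otimes_F F_v$ is isomorphic to $M_{2n}(F_v)$, and $\Lambda_v := \Lambda \otimes_{\calO} \calO_v$ is a maximal $\calO_v$-order in it which is stable under the symplectic-type involution $\sigma := \tau|\gamma$.

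Next I would apply the preceding Lemma (with $R = \calO_v$, $k = F_v$) to obtain an element $g \in \GL_{2n}(F_v)$ such that conjugation by $g$ simultaneously carries $\Lambda_v$ to $M_{2n}(\calO_v)$ and converts $\sigma$ into the standard symplectic involution $\sigma_0(x) = J x^T J^{-1}$. Since this conjugation is an isomorphism of $\calO_v$-algebras, it induces by functoriality an isomorphism of $\calO_v$-group schemes
\begin{equation*}
  \Psi : \GL_{\Lambda_v} \stackrel{\simeq}{\longrightarrow} \GL_{2n,\calO_v}
\end{equation*}
which respects the reduced norm (hence restricts to $\SL_{\Lambda_v} \cong \SL_{2n,\calO_v}$) and intertwines $\sigma^* = \tau^*|\gamma$ with $\sigma_0^*$. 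Consequently $\Psi$ restricts to an isomorphism between $G(\gamma) \times_\calO \calO_v$ and the subscheme of $\sigma_0^*$-fixed points in $\GL_{2n,\calO_v}$.

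The remaining step, and the only genuine computation in the proof, is to identify this fixed-point subscheme with $\Sp_n \times_\bbZ \calO_v$. Writing out the condition $x = \sigma_0^*(x) = J(x^{-1})^T J^{-1}$ and using $J^T = -J$, $J^{-1} = -J$, one finds after a short manipulation that it is equivalent to $x^T J x = J$, which is precisely the defining equation of $\Sp_n$. (By Remark \ref{rem:symplNrdOne}, such elements automatically have reduced norm one, so it is immaterial whether we view the fixed-point scheme inside $\GL_{\Lambda_v}$ or inside $\SL_{\Lambda_v}$.) The main potential pitfall is bookkeeping the sign conventions in this $J$-computation; once that is settled the isomorphism $G(\gamma)\times_\calO \calO_v \cong \Sp_n \times_\bbZ \calO_v$ follows immediately from $\Psi$.
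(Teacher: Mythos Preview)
Your proof is correct and follows exactly the approach the paper intends: apply the preceding Lemma to the symplectic-type involution $\tau|\gamma$ (which is what the paper's one-line proof does), and then identify the fixed-point scheme of the standard symplectic involution with $\Sp_n$. You have simply spelled out in detail the steps that the paper leaves implicit, including the bookkeeping with $J$ and the appeal to Remark~\ref{rem:symplNrdOne}.
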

\begin{proof}
   This follows directly from the previous lemma since $\tau|\gamma$ is an involution of symplectic type.
\end{proof}
 
 \begin{proposition}  \label{prop:localModulusSplit}
Let $v \in V_f$ be a finite place which splits $D$.
Consider the bilinear form $B: \LG(\gamma)_F \times \LG(\gamma)_F \to F$ defined by
 $B(x,y) = -\frac{1}{2} \trd(xy)$.
The local modulus factor (see \ref{par:modulusFactor}) is 
 \begin{equation*}
    \mf(B)_v = |2|^{-n}_v.
 \end{equation*}
\end{proposition}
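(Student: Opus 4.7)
The plan is to apply Corollary \ref{cor:SplitSp} to identify $G(\gamma) \times_\calO \calO_v$ with $\Sp_n \times_\bbZ \calO_v$, which reduces the assertion to a direct computation on the standard symplectic Lie algebra. Under this identification the reduced trace on $A \otimes_F F_v \cong M_{2n}(F_v)$ coincides with the ordinary matrix trace, and $\LG(\gamma)_{\calO_v}$ becomes the free $\calO_v$-module
\[
   \mathfrak{g} \:=\: \left\{\:\begin{pmatrix} P & Q \\ R & -P^T \end{pmatrix} \in M_{2n}(\calO_v) \:\bigg|\: Q = Q^T,\ R = R^T\:\right\}
\]
equipped with the form $(x,y) \mapsto -\tfrac{1}{2}\Tr(xy)$.

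First, choose the $\calO_v$-basis of $\mathfrak{g}$ given by: matrices $a_{ij}$ with $E_{ij}$ in the $P$-slot (and hence $-E_{ji}$ in the lower-right slot), $1 \leq i,j \leq n$; elements $b_{ii}$ with $E_{ii}$ in the $Q$-slot and $b_{ij}$ with $E_{ij}+E_{ji}$ there for $i < j$; and the analogous $c_{ii},c_{ij}$ in the $R$-slot. A direct computation shows that the products $a_{ij}b_{kl}$ and $a_{ij}c_{kl}$ are strictly block-triangular and hence traceless, so the Gram matrix of $B$ in this basis decouples into an $a$-block and a $(b,c)$-block.

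Then compute each block. The $a$-block is controlled by $\Tr(a_{ij}a_{kl}) = 2\delta_{il}\delta_{jk}$, yielding $n$ diagonal entries equal to $-1$ together with $\binom{n}{2}$ anti-diagonal $2 \times 2$ blocks $\bigl(\begin{smallmatrix} 0 & -1 \\ -1 & 0 \end{smallmatrix}\bigr)$; its determinant is $\pm 1$. The $(b,c)$-block splits into pairs: each $(b_{ii},c_{ii})$ gives $\bigl(\begin{smallmatrix} 0 & -1/2 \\ -1/2 & 0 \end{smallmatrix}\bigr)$ with determinant $-1/4$, and each $(b_{ij},c_{ij})$ with $i<j$ gives $\bigl(\begin{smallmatrix} 0 & -1 \\ -1 & 0 \end{smallmatrix}\bigr)$ with determinant $-1$. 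Multiplying,
\[
   \bigl|\det(B(e_i,e_j))\bigr|_v \:=\: |4|_v^{-n} \:=\: |2|_v^{-2n},
\]
and consequently $\mf(B)_v = |2|_v^{-n}$.

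The only delicate point is the interplay between the factor $\tfrac{1}{2}$ in the definition of $B$ and the different normalisations $E_{ii}$ versus $E_{ij}+E_{ji}$ for the symmetric basis vectors; this is precisely what produces the extra $|2|_v^{-n}$, which of course is trivial at every non-dyadic place.
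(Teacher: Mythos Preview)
Your proof is correct and follows essentially the same approach as the paper: reduce via Corollary~\ref{cor:SplitSp} to the standard symplectic Lie algebra over $\calO_v$, choose the same explicit $\calO_v$-basis built from the $P$-, $Q$-, and $R$-blocks, and compute the Gram matrix of $B$. The paper records the same pairings $B(a_{ij},a_{kl})=-\delta_{jk}\delta_{il}$, $B(b_{ij},c_{kl})=-\delta_{ik}\delta_{jl}$, and $B(b_i,c_j)=-\tfrac12\delta_{ij}$, arriving at $\mf(B)_v=|2|_v^{-n}$ by the identical determinant calculation.
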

\begin{proof}
  By Corollary \ref{cor:SplitSp} we can assume $G(\gamma) = \Sp_n$ over $\calO_v$.
  This means
  \begin{equation*}
   \LG(\gamma)_{\calO_v} = \Lsp(n,\calO_v) = \{\:x \in M_{2n}(\calO_v)\:|\: x^TJ + Jx = 0\:\}.
  \end{equation*}
   Note that the form $B$ is given by the analogous formula $B(x,y) = -\frac{1}{2}\Tr(xy)$.
   Recall that the elements of $\Lsp(n,\calO_v)$ are matrices of the form 
   \begin{equation*}
        \begin{pmatrix}
               a & b \\
               c & -a^T
        \end{pmatrix}
   \end{equation*}
   with $a,b,c \in M_n(\calO_v)$ where $b$ and $c$ are symmetric matrices.
   Let $E_{s,t}$ denote the elementary $2n \times 2n$ matrix with exactly one entry $1$ in position $(s,t)$.
   We choose an $\calO_v$-basis of $\Lsp(n,\calO_v)$ which is made up of the following elements:
   \begin{enumerate}[ \enumspace  (1)]
    \item   $a_{i,j} := E_{i,j} - E_{j+n,i+n}$ for all $i,j \in \{1,\dots,n\}$,
    \item   $b_{i,j} := E_{i,j+n} + E_{j, i+n}$ for all $1 \leq i<j \leq n$,
    \item   $c_{i,j} := E_{i+n,j} + E_{j+n, i}$ for all $1 \leq i<j \leq n$, and
    \item   $b_i := E_{i,i+n}$ and $c_i := E_{i+n, i}$ for all $i \in \{1,\dots,n\}$. 
   \end{enumerate}
   We evaluate the form $B$ on all the basis vectors.
   
   It is an easy observation that
   \begin{align*}
      0 &= B(a_{i,j}, c_{k,l}) = B(a_{i,j}, b_{k,l}) = B(a_{i,j}, b_{k}) = B(a_{i,j}, c_{k}) \\
       &= B(c_{i,j},c_{k,l}) = B(b_{i,j},b_{k,l}) = B(c_i,c_j) = B(b_i,b_j).
   \end{align*}
    for all $i,j,k,l$.
    Moreover, one readily verifies that $B(b_{i,j}, c_k) = B(c_{i,j},b_k) = 0$ for all $i,j,k$.
    The remaining cases yield:
    \begin{itemize}
     \item $B(a_{i,j}, a_{k,l}) = -\delta_{j,k}\delta_{i,l}$ for all $i,j,k,l \in \{1,\dots,n\}$,
     \item $B(b_{i,j},c_{k,l}) = - \delta_{i,k}\delta_{j,l}$ for all $i<j \leq n$ and $k < l \leq n$, and      
     \item $B(b_i,c_j) = -\frac{1}{2} \delta_{i,j}$ for all $i,j \in \{1,\dots,n\}$.
    \end{itemize}
     Using these results, we are able to calculate the modulus factor and obtain 
     \begin{equation*}
        \mf(B)_v = \bigl|\det\begin{pmatrix}
                                  0 & -1/2 \\
                                -1/2 & 0
                              \end{pmatrix}\bigr|_v^{n/2} = |2|^{-n}_v. \qedhere
     \end{equation*}
\end{proof}

\begin{proposition}\label{prop:localModulusRam}
  Let $v \in \Ram_f(D)$ be a finite ramified place and let $\LP \subset \calO$ be the associated prime ideal.
  The local modulus factor for the group $G(\gamma)$ and the form $B$ defined in \ref{par:metricFormB}
  is
  \begin{equation*}
        \mf(B)_v = |2|^{-n}_v \N(\LP)^{-n(n+1)/2}.
  \end{equation*}
\end{proposition}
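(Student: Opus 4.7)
The plan is to reduce to a direct computation of the Gram determinant of $B$ on $\LG(1)_{\calO_v}$. Since Corollary~\ref{cor:H1localfiniteGL} gives $H^1(\tau^*,\GL_\Lambda(\calO_v))=\{1\}$, Lemma~\ref{lem:EquivCyclesGiveIsoGroups} produces an isomorphism $G(\gamma)\times_\calO\calO_v\cong G(1)\times_\calO\calO_v$ implemented by inner conjugation in $\GL_\Lambda(\calO_v)$; since $\trd_A$ is invariant under inner automorphisms, this isomorphism preserves the form $B$, and it suffices to prove the claim for $\gamma=1$.

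Write $\Delta := \Lambda_D\otimes_\calO\calO_v$ for the unique maximal order of the division quaternion algebra $D_v$, and $\Delta^{(0)}:=\{a\in\Delta\mid \bar a=-a\}$. The condition $\tau(X)=-X$ on $X\in M_n(\Delta)$ forces $X_{ii}\in\Delta^{(0)}$ and $X_{ji}=-\overline{X_{ij}}$ for $i<j$, giving an $\calO_v$-module decomposition $\LG(1)_{\calO_v}\cong(\Delta^{(0)})^n\oplus\Delta^{\binom{n}{2}}$ into $n$ diagonal and $\binom{n}{2}$ off-diagonal slots. A short reduced-trace computation shows that $B$ is block-diagonal for this decomposition: on an off-diagonal slot it becomes $(x,y)\mapsto\trd_D(x\bar y)$ on $\Delta$, and on a diagonal slot $(x,y)\mapsto-\tfrac12\trd_D(xy)$ on $\Delta^{(0)}$.

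For the off-diagonal slot I use the presentation $\Delta=\calO_\ell\oplus\calO_\ell j$, with $\ell/F_v$ the unramified quadratic extension, $\pi\in\calO_v$ a uniformizer, $j^2=\pi$, and $j\mu=\bar\mu j$ for $\mu\in\ell$. Fixing an $\calO_v$-basis $\{1,\alpha\}$ of $\calO_\ell$ and letting $T,N\in\calO_v$ denote the trace and norm of $\alpha$ over $F_v$, the basis $\{1,\alpha,j,\alpha j\}$ of $\Delta$ yields a Gram matrix that is block-diagonal with $2\times 2$ blocks $M$ and $-\pi M$, where $M=\left(\begin{smallmatrix}2 & T\\ T & 2N\end{smallmatrix}\right)$. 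Its determinant is $\pi^2(T^2-4N)^2$; since $\ell/F_v$ is unramified, $T^2-4N\in\calO_v^\times$, so the off-diagonal per-slot factor is $|\det|_v^{1/2}=\N(\LP)^{-1}$.

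The main obstacle is the diagonal slot, where the $\calO_v$-structure of $\Delta^{(0)}$ depends on the residual characteristic. If $v\nmid 2$ one can arrange $T=0$, in which case $\{\alpha,j,\alpha j\}$ is an $\calO_v$-basis of $\Delta^{(0)}$ and the Gram matrix of $-\tfrac12\trd_D(xy)$ is $\operatorname{diag}(N,-\pi,-N\pi)$ with determinant $N^2\pi^2$; here $|2|_v=1=|N|_v$, so $|\det|_v=|2|_v^{-2}\N(\LP)^{-2}$. If $v\mid 2$ one is forced to take $T\in\calO_v^\times$ (otherwise $T^2-4N$ would not be a unit), and $\{1-(2/T)\alpha,\,j,\,\alpha j\}$ is then an $\calO_v$-basis of $\Delta^{(0)}$ yielding Gram determinant $\pi^2(T^2-4N)^2/(4T^2)$, which again gives $|\det|_v=|2|_v^{-2}\N(\LP)^{-2}$. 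The uniform per-slot factor is thus $|2|_v^{-1}\N(\LP)^{-1}$, and multiplying $n$ diagonal and $\binom{n}{2}$ off-diagonal contributions yields $\mf(B)_v=|2|_v^{-n}\N(\LP)^{-n(n+1)/2}$.
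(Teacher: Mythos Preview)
Your proof is correct and follows the same overall strategy as the paper: reduce to $\gamma=1$ via Corollary~\ref{cor:H1localfiniteGL}, decompose $\LG(1)_{\calO_v}$ into diagonal and strict upper-triangular slots, and compute the Gram determinant of $B$ block by block. The one genuine difference is in how you handle the local quaternion order. You fix the explicit presentation $\Delta=\calO_\ell\oplus\calO_\ell j$ and are then forced to treat the trace-zero module $\Delta^{(0)}$ separately according to whether $v\mid 2$. The paper instead chooses an abstract $\calO_v$-basis $v_0,v_1,v_2,v_3$ of $\Delta$ with $\trd_D(v_0)=1$ and $\trd_D(v_i)=0$ for $i\ge 1$ (which exists by smoothness of the maximal order), so that $v_1,v_2,v_3$ is automatically a basis of $\Delta^{(0)}$; both Gram determinants are then related, via a single $4\times 4$ change-of-basis trick, to the reduced discriminant of $\Delta$, which is $\LP^2$. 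This buys a uniform argument with no residue-characteristic case split, whereas your explicit computation is more hands-on but has to branch at $2$. Either way the per-slot contributions $(|2|_v^{-1}\N(\LP)^{-1}$ for diagonal, $\N(\LP)^{-1}$ for off-diagonal$)$ agree, and the final product is the same.
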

\begin{proof}
  The $F_v$-algebra $D_v := D \otimes_F F_v$ is the unique quaternion division algebra over $F_v$ and $\Delta := \Lambda_D \otimes_\calO \calO_v$ 
  is the unique maximal order in $D_v$.

  Due to Corollary \ref{cor:BasicStructureFPGroups} we can assume that $\gamma = 1$, i.e.~$G(\gamma)\times\calO_v$ is isomorphic to
  $ H := G(1)\times\calO_v$.
  We define 
  \begin{equation*}
      \LH := \Lie(H)(\calO_v) = \{\:x \in M_n(\Delta)\:|\:\tau(x) = -x\:\}.
  \end{equation*}
  Recall that $\tau(x) = \conj{x}^T$. 
  
  Take an $\calO_v$-basis $v_0, v_1, v_2, v_3$ of $\Delta$ such that $\trd_D(v_0) = 1$ and $\trd_D(v_i)=0$ for $i=1,2,3$.
  Such a basis exists since $\trd_D: \Delta \to \calO_v$ is surjective (maximal orders are smooth, see Prop.~\ref{prop:maxImpliesSmooth}).
  We construct an $\calO_v$-basis of the Lie algebra $\LH$, which consists of the following elements
  \begin{enumerate}[\enumspace (1)]
   \item $a_{s,i} := v_s E_{i,i}$ for all $s\in\{1,2,3\}$ and $i \in \{1,\dots,n\}$, and
   \item $b_{s,i,j} := v_s E_{i,j} - \conj{v_s} E_{j,i}$ for all $s\in\{0,1,2,3\}$ and $i,j \in \{1,\dots,n\}$ with $i < j$.
  \end{enumerate}
   We calculate the form $B$ on all basis vectors.
   Observe that $B(a_{s,i}, b_{t,k,l}) = 0$ for all~$s,t,i,k,l$.
   Moreover, for $s,t \in \{1,2,3\}$ and $i,j \in \{1,\dots,n\}$ we find
   \begin{equation*}
       B(a_{s,i},a_{t,j}) = -\frac{1}{2}\trd\bigl( v_s E_{i,i} v_t E_{j,j} \bigr) = -\frac{1}{2}\delta_{i,j} \trd_D(v_s v_t).
   \end{equation*}
   Finally, let $s,t \in \{0,1,2,3\}$ and let $i,j,k,l \in \{1,\dots,n\}$ with $i<j$ and $k<l$. 
   We obtain
   \begin{equation*}
       B(b_{s,i,j},b_{t,k,l}) =  \frac{1}{2} \trd_D(\conj{v_s}v_t+v_s\conj{v_t})\delta_{i,k}\delta_{j,l} = \trd_D(v_s\conj{v_t})\delta_{i,k}\delta_{j,l}.
   \end{equation*}
   Summing up we obtain a formula for the modulus factor
   \begin{equation}\label{eq:modulusRamified}
       \mf(B)^2_v = \bigl|\frac{1}{8}\det(\trd(v_sv_t))_{s,t=1,2,3}\bigr|_v^n \cdot \bigl|\det(\trd(v_s\conj{v_t}))_{s,t=0,1,2,3}\bigr|^{n(n-1)/2}_v
   \end{equation}
   Since the elements $\conj{v_0}, \conj{v_1},\conj{v_2},\conj{v_3}$ form an $\calO_v$-basis of $\Delta$ as well, 
   we see that the second term $\bigl|\det(\trd(v_s\conj{v_t}))_{s,t=0,1,2,3}\bigr|_v$ is the valuation of the discriminant of $\Delta$.
   It is known that the discriminant of $\Delta$ is $\LP_v^2$ (see (14.9) in \cite{Reiner2003}). 
  
   To calculate the first term in equation \eqref{eq:modulusRamified} we consider $w_0 := 1$ and we define $w_s= v_s$ for $s=1,2,3$.
   Note that $w_0, w_1,w_2,w_3$ is in general not an $\calO_v$-basis of $\Delta$ since $\trd_D(1) = 2$ need not be a unit in $\calO_v$.
   We can write
   \begin{equation*} 
       w_0 = 1 = r_0v_0 + r_1 v_1 + r_2v_2+ r_3v_3
   \end{equation*}
   for certain $r_0,r_1,r_2,r_3$ in $\calO_v$. Applying the reduced trace we get ${2 = \trd_D(1) = r_0}$.
   Furthermore, this implies that the matrix $\bigl(\trd(w_sw_t)\bigr)_{s,t=0,1,2,3}$ can be written as a product of matrices
   \begin{equation*}
       \begin{pmatrix}
           2 & r_1 & r_2 & r_3 \\
           0 &  1  & 0   & 0 \\
           0 &  0  & 1   & 0 \\
           0 &  0  & 0   & 1   
       \end{pmatrix}
         \bigl(\trd(v_iv_j)\bigr)_{i,j=0,1,2,3}
       \begin{pmatrix}
           2 & 0 & 0 & 0 \\
           r_1 &  1  & 0   & 0 \\
           r_2 &  0  & 1   & 0 \\
           r_3 &  0  & 0   & 1   
       \end{pmatrix}.
   \end{equation*}
   Note that 
   \begin{equation*}
       \bigl(\trd(w_sw_t)\bigr)_{s,t=0,1,2,3} = \begin{pmatrix}
                                                    2 & 0 & 0 & 0\\
                                                    0 & \trd(v_1v_1) & \trd(v_1v_2) & \trd(v_1v_3) \\
                                                    0 & \trd(v_2v_1) &\trd(v_2v_2) &\trd(v_2v_3)  \\
                                                    0 & \trd(v_3v_1) &\trd(v_3v_2) &\trd(v_3v_3)  
                                                  \end{pmatrix}.
   \end{equation*}
   We deduce that $\bigl|\det(\trd(v_sv_t))_{s,t=1,2,3}\bigr|_v= |2|_v\N(\LP)^{-2}$.
   In total the local modulus factor is
   \begin{equation*}
        \mf(B)_v = |2|^{-n}_v \N(\LP)^{-n - n(n-1)/2} = |2|^{-n}_v \N(\LP)^{- n(n+1)/2}. \qedhere
   \end{equation*}
\end{proof}

\begin{corollary}\label{cor:globalModulus}
   Let $\gamma \in Z^1(\tau^*,G(\calO))$ be a cocycle.
   The global modulus factor $\mf(B)$ for the group $G(\gamma)$
   with respect to the form $B$ defined in \ref{par:metricFormB} is
   \begin{equation*}
       \mf(B)  = 2^{n[F:\bbQ]} (-1)^{rn(n+1)/2}\Drd{D}^{-n(n+1)/2},
   \end{equation*}
   where $\Drd{D}$ denotes the signed reduced discriminant of $D$ (see \ref{def:redDiscr}).
\end{corollary}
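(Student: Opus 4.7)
The plan is to combine the two local computations (Propositions \ref{prop:localModulusSplit} and \ref{prop:localModulusRam}) and then collapse the resulting infinite product using the product formula for $2 \in F^\times$.

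Splitting the product over $V_f$ according to whether $D$ splits or ramifies at $v$, I would write
\begin{equation*}
  \mf(B) \;=\; \prod_{v \in V_f \setminus \Ram_f(D)} |2|_v^{-n} \cdot \prod_{v \in \Ram_f(D)} |2|_v^{-n}\N(\LP_v)^{-n(n+1)/2} \;=\; \Bigl(\prod_{v \in V_f}|2|_v^{-n}\Bigr) \prod_{\LP \in \Ram_f(D)} \N(\LP)^{-n(n+1)/2}.
\end{equation*}
Only finitely many factors differ from $1$, so everything is well-defined.

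Next I would apply the product formula $\prod_{v \in V}|2|_v = 1$, using the normalised local absolute values ($|2|_v = 2$ at each real place, $|2|_v = 4$ at each complex place, and of course the non-archimedean normalisations at the finite places). Since $F$ is totally real (or using $r+s+2t = [F:\bbQ]$ more generally), the archimedean contribution is $\prod_{v \in V_\infty}|2|_v = 2^{[F:\bbQ]}$, and hence $\prod_{v \in V_f}|2|_v^{-n} = 2^{n[F:\bbQ]}$.

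For the remaining factor I would use the definition $\Drd{D} = (-1)^r \prod_{\LP \in \Ram_f(D)}\N(\LP)$, which gives $\prod_{\LP \in \Ram_f(D)}\N(\LP) = (-1)^r \Drd{D}$ and therefore
\begin{equation*}
  \prod_{\LP \in \Ram_f(D)} \N(\LP)^{-n(n+1)/2} \;=\; (-1)^{rn(n+1)/2}\, \Drd{D}^{-n(n+1)/2}.
\end{equation*}
Multiplying with the previous factor yields the stated formula. The only delicate point is pedantic bookkeeping with the normalisations of the absolute values and the sign of $\Drd{D}$; no additional idea is needed beyond the two local propositions and the product formula.
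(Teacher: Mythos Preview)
Your proof is correct and follows exactly the same route as the paper: combine Propositions~\ref{prop:localModulusSplit} and~\ref{prop:localModulusRam}, factor out $\prod_{v\in V_f}|2|_v^{-n}$, apply the product formula (using that $F$ is totally real here), and rewrite the remaining product via the definition of $\Drd{D}$. The paper's proof is simply a terser version of what you wrote.
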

\begin{proof}
  By Prop.~\ref{prop:localModulusSplit}, Prop.~\ref{prop:localModulusRam},
  and an application of the product formula
  we obtain
  \begin{equation*}
     \mf(B) = \prod_{v \in V_f} |2|_v^{-n} \prod_{\LP  \in \Ram_f(D)}\N(\LP)^{-n(n+1)/2} = 2^{n[F:\bbQ]} \prod_{\LP  \in \Ram_f(D)}\N(\LP)^{-n(n+1)/2}. \qedhere
  \end{equation*}
\end{proof}

\subsubsection{The Euler characteristic of the fixed point groups}\label{par:IntroCharThm}
Let $\gamma \in Z^1(\tau^*,G(\calO))$ be a cocycle. 
We are now able to compute the Euler characteristic of torsion-free arithmetic subgroups of $G(\gamma)$.
In the next theorem we give a precise formula for principal congruence subgroups. 
More general subgroups can be treated analogously.

For the next theorem the number field $F$ need not be totally real.
Let $\LA \subset \calO$ be a proper ideal.
For a finite place  $v \in V_f$ we define $K_v(\gamma, \LA)$ to be the kernel of the reduction $G(\gamma)(\calO_v) \to G(\gamma)(\calO_v/\LA\calO_v)$.
Note that $K_v(\gamma, \LA) = G(\gamma)(\calO_v)$ for almost all places.
The group 
\begin{equation*}
  K_f(\gamma,\LA) :=  \prod_{v\in V_f} K_v(\gamma,\LA)
\end{equation*}
is an open compact subgroup of the locally compact group $G(\gamma)(\bbA_f)$.
This subgroup is given by a local datum $(U,\alpha)$ (cf.~\ref{par:congruenceGroups}).
Let $v \in V_f$ be a finite place and let $\LP$ be the associated prime ideal.
Let $e = \nu_\LP(\LA)$ be the exponent of $\LP$ in $\LA$.
We have $\alpha_v = 1$ and $U_v = G(\gamma)(\calO/\LP)$ if $e = 0$, otherwise $\alpha_v = e$ and
$U_v = \{1\} \subseteq  G(\gamma)(\calO/\LP^e)$.

Let $G(\gamma)_\infty = \prod_{v \in V_\infty} G(\gamma)(F_v)$ and let $K(\gamma)_\infty \subseteq G(\gamma)_\infty$
be a maximal compact subgroup. For every real ramified place $v \in \Ram_\infty(D)$ we denote the local signature of the
 class of $\gamma$ in $H^1(\tau^*,G(F_v))$ by $(p_v,q_v)$ (cf.~\ref{def:Signature}).
 
 \begin{theorem}\label{thm:EulerCharFPGroups}
   Assume that $G(\gamma)(F)$ acts freely on 
   $\bsl{K(\gamma)_\infty K_f(\gamma, \LA)}{G(\gamma)(\bbA)}$.
   The Euler characteristic of the double quotient space
   \begin{equation*}
        S(\LA) := \bsl{K(\gamma)_\infty K_f(\gamma, \LA)}{G(\gamma)(\bbA)}/G(\gamma)(F)
   \end{equation*}
    is non-zero if and only if $F$ is totally real.
    In this case the following formula holds
   \begin{equation*}
        \chi\bigl(S(\LA)\bigr) 
           = 2^{-nr} \N(\LA)^{n(2n+1)}\Drd{D}^{n(n+1)/2}\prod_{v\in\Ram_\infty(D)} \binom{n}{p_v} 
           \prod_{j=1}^n M(j,\LA,D),      
   \end{equation*}
    where $M(j,\LA,D)$ is defined as
   \begin{equation*}
      M(j,\LA,D) := \zeta_F(1-2j) \prod_{\LP | \LA}\bigl(1-\frac{1}{\N(\LP)^{2j}}\bigr)
      \prod_{\substack{\LP \in \Ram_f(D) \\ \LP \nmid \LA}} \bigl(1+(\frac{-1}{\N(\LP)})^{j}\bigr).
   \end{equation*}
   Here $r$ is the number of real places of $F$ where $D$ is ramified.
   The sign of $\chi(S(\LA))$ is $(-1)^{sn(n+1)/2}$, where $s$ denotes the number of real places where $D$ splits.
\end{theorem}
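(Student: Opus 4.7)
The plan is to apply the adelic Gauss--Bonnet formula of Theorem \ref{thm:adelicFormula} directly to the smooth $\calO$-group scheme $G(\gamma)$, using the data assembled in paragraphs \ref{par:symSpace}--\ref{par:metricFormB}, and to reconcile the resulting product of local factors with the claimed zeta values by means of the functional equation for $\zeta_F$.

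First, if $F$ is not totally real it has a complex place, and the rank inequality in the criterion of Theorem \ref{thm:EulerPoincareMeasure} (applied to the inner form $G(\gamma)$) gives $\mu_\chi = 0$, hence $\chi(S(\LA)) = 0$; this handles one direction of the dichotomy. Suppose now $F$ is totally real. The form $B(x,y) = -\tfrac{1}{2}\trd_A(xy)$ defined in \ref{par:metricFormB} is nice with respect to a $K(\gamma)_\infty$-Cartan decomposition, and all the ingredients of Theorem \ref{thm:adelicFormula} are available: $d = \dim G(\gamma) = n(2n+1)$; the even half-dimension $p = \tfrac{1}{2}\dim X(\gamma)$ from \ref{par:symSpace} yields $(-1)^p = (-1)^{sn(n+1)/2}$ because $\sum_v 2p_vq_v$ is even; the equal-rank condition and the Weyl-group quotient $2^{ns}\prod_{v\in\Ram_\infty(D)}\binom{n}{p_v}$ come from \ref{par:LieAlgebrasComplex}; $\tau(G(\gamma)) = 1$ and $|\pi_0(G(\gamma)_\infty)| = 1$ from Remark \ref{rem:GgammaSemisimple}; the compact dual group is $G_u \cong \Sp(n)^{[F:\bbQ]}$, whose volume is given by Proposition \ref{prop:VolumeSpn}; and the global modulus factor $\mf(B) = 2^{n[F:\bbQ]}(-1)^{rn(n+1)/2}\Drd{D}^{-n(n+1)/2}$ is supplied by Corollary \ref{cor:globalModulus}.

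Next I would compute the local factors $\N(\LP_v)^{d\alpha_v}/|U_v|$. Splitting $V_f$ into the four cases $(v \mid \LA \text{ or not}) \times (v \in \Ram_f(D) \text{ or not})$: when $v \mid \LA$ the contribution is simply $\N(\LP_v)^{de}$ with $e = \nu_{\LP_v}(\LA)$; when $v \nmid \LA$ and $v \notin \Ram_f(D)$ we use Corollary \ref{cor:BasicStructureFPGroups} to identify $G(\gamma)\times\calO_v \cong \Sp_n\times\calO_v$, so that $|U_v| = \N(\LP_v)^d\prod_{j=1}^n(1-\N(\LP_v)^{-2j})$; when $v \nmid \LA$ and $v \in \Ram_f(D)$ one analyses the smooth integral model $G(\gamma)\otimes\calO_v$ obtained from the unique maximal order $M_n(\Delta_v)$ and the induced involution, obtaining a factor $|U_v| = \N(\LP_v)^d\prod_{j=1}^n(1+(-1/\N(\LP_v))^j)$. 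After assembling everything, the combined contribution over $v \mid \LA$ together with the appropriate Euler factors yields the advertised products
\begin{equation*}
    \N(\LA)^{n(2n+1)}\prod_{j=1}^n \zeta_F(2j)^{-1}\prod_{\LP\mid\LA}(1-\N(\LP)^{-2j})^{-1}\prod_{\substack{\LP\in\Ram_f(D)\\ \LP\nmid\LA}}\bigl(1+(\tfrac{-1}{\N(\LP)})^j\bigr)^{-1}
\end{equation*}
up to the remaining transcendental and discriminant-dependent factors.

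Finally I would invoke the classical functional equation of $\zeta_F$ for a totally real field,
\begin{equation*}
  \zeta_F(2j) = \zeta_F(1-2j) \cdot 2^{[F:\bbQ](1-2j)} \pi^{[F:\bbQ](2j - \frac{1}{2})} |\disc{F}|^{\frac{1}{2}-2j}(2(2j-1)!)^{[F:\bbQ]} \cdot (\text{explicit sign}),
\end{equation*}
for each $j = 1,\dots,n$. The powers of $2\pi$ and $(2j-1)!$ here exactly match those produced by Proposition \ref{prop:VolumeSpn} (raised to the power $[F:\bbQ]$), and the $|\disc F|$-powers cancel against the $\disc{F}^{d/2}$ prefactor in Theorem \ref{thm:adelicFormula}; the total sign collapses to $(-1)^{sn(n+1)/2}$ since both $(-1)^p$ and $(-1)^{rn(n+1)/2}$ from the modulus factor have known parities. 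The main obstacle is the identification of $|G(\gamma)(\bbF_v)|$ at the finite ramified places $v \in \Ram_f(D)$: here the reduction of $G(\gamma)\otimes\calO_v$ is not the split symplectic group but a twisted form whose order is governed by the residue field extension $\Delta_v/\mathfrak{P}_v$, and one must verify that the resulting Euler factor is precisely $\prod_j(1 + (-1/\N(\LP_v))^j)$. The secondary, largely bookkeeping obstacle is to confirm that every factor of $2$, $\pi$, and $|\disc F|$ really cancels after invoking the functional equation.
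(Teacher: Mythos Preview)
Your plan is exactly the paper's proof: apply Theorem~\ref{thm:adelicFormula} to the smooth scheme $G(\gamma)$ with the form $B$, feed in the Weyl-group quotient, the compact-dual volume, and the modulus factor, then evaluate the local terms $\N(\LP_v)^{d\alpha_v}/|U_v|$ case by case and convert $\zeta_F(2j)$ to $\zeta_F(1-2j)$ via the functional equation.

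Two corrections on the point you flagged as the main obstacle. First, at a finite ramified place the order is
\[
  |G(\gamma)(\calO/\LP)| \;=\; \N(\LP)^{d}\prod_{j=1}^{n}\Bigl(1-\frac{(-1)^{j}}{\N(\LP)^{j}}\Bigr),
\]
with a \emph{minus} sign; the paper obtains this by showing $G(\gamma)(\calO/\LP)\cong U(\ell/k)\ltimes \Sym_n(\ell)$ where $\ell/k$ is the quadratic residue-field extension, and then using the known order of the finite unitary group. The factor $\bigl(1+(-1/\N(\LP))^{j}\bigr)$ that appears in $M(j,\LA,D)$ only emerges after you divide this by the Euler factor $(1-\N(\LP)^{-2j})$ of $\zeta_F(2j)$; it is not the group order itself. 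Second, your displayed assembled product is upside down: one has
\[
  \prod_{v\in V_f}\frac{\N(\LP_v)^{d\alpha_v}}{|U_v|}
  \;=\;
  \N(\LA)^{d}\prod_{j=1}^{n}\zeta_F(2j)\prod_{\LP\mid\LA}\bigl(1-\N(\LP)^{-2j}\bigr)\prod_{\substack{\LP\in\Ram_f(D)\\ \LP\nmid\LA}}\bigl(1+(\tfrac{-1}{\N(\LP)})^{j}\bigr),
\]
not the reciprocal. With these signs fixed the cancellation of powers of $2$, $\pi$, and $\disc{F}$ via the functional equation goes through exactly as you outlined.
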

\begin{proof}
    It follows from Remark \ref{rem:NoComplexPlaces} that the Euler characteristic vanishes whenever $F$ has a complex place. 
    Therefore we may assume that $F$ is totally real. We want to apply the adelic Euler characteristic formula (Thm.~\ref{thm:adelicFormula}).
    We know that $G(\gamma)$ is a smooth group scheme over $\calO$ (see Lem.~\ref{lem:FPGroupsAreSmooth}).
    Further $G \times_\calO F$ is an inner form of the symplectic group, and is thus a semisimple and simply connected 
    algebraic group of dimension $d=n(2n+1)$ (cf.~Remark \ref{rem:GgammaSemisimple}).
    Note further, that by assumption $G(\gamma)(F)$ acts freely on 
    $\bsl{K(\gamma)_\infty K_f(\gamma, \LA)}{G(\gamma)(\bbA)}$.
    
    Moreover, we observe that $\dim X(\gamma)$ is even (cf.~\ref{par:symSpace})
    and that the complexified Lie algebras $\LK(\gamma)_\infty\otimes\bbC$ and $\LG(\gamma)_{\infty,\bbC}$
    have equal rank (cf.~\ref{par:LieAlgebrasComplex}).
    We conclude that the Euler characteristic does not vanish and we can apply Theorem \ref{thm:adelicFormula}. 
   
    We fix the non-degenerate bilinear form $B: \LG(\gamma)_F \times \LG(\gamma)_F  \to F$
    defined by $B(x,y) := -\frac{1}{2}\trd_A(xy)$.
    It is easy to see that the compact dual group $G(\gamma)_u$ of $G(\gamma)_\infty$ is 
    isomorphic to $\Sp(n)^{[F:\bbQ]}$.
    Further note that $B$ is given by the same formula on each factor of the compact dual group.
    Therefore the volume is
    \begin{equation*}
          \vol_B(G(\gamma)_u) = \Bigl(\prod_{j=1}^n \frac{(2\pi)^{2j}}{2 \cdot (2j-1)!}\Bigr)^{[F:\bbQ]}
    \end{equation*}
    according to Prop.~\ref{prop:VolumeSpn}.
    Using the global modulus factor, calculated in Cor.~\ref{cor:globalModulus}, 
    and the quotient of the orders of the involved Weyl groups, derived in \ref{par:LieAlgebrasComplex}, the
    adelic formula yields
    \begin{equation}\label{eq:tmpFormula}
       \begin{aligned}
        \chi(S(\LA)) =& (-1)^{[F:\bbQ]n(n+1)/2} \disc{F}^{d/2} 2^{ns} \prod_{v \in \Ram_\infty(D)}\binom{n}{p_v} \\
                      &\cdot \Bigl(\prod_{j=1}^n \frac{2 \cdot (2j-1)!}{(2\pi)^{2j}}\Bigr)^{[F:\bbQ]} 2^{-n[F:\bbQ]} \Drd{D}^{n(n+1)/2}
                      \prod_{\LP\in V_f} \frac{\N(\LP)^{d\alpha_{\LP}}}{|U_\LP|}.
       \end{aligned}
    \end{equation}
    Here $s$ denotes the number of real places of $F$ which split $D$. The only terms that can be negative are $(-1)^{[F:\bbQ]n(n+1)/2}$
    and the signed reduced discriminant. Consequently, the sign of the Euler characteristic is $(-1)^{sn(n+1)/2}$.

    Let $v\in V_f$ be a finite place with associated prime ideal $\LP$ and
    consider $\frac{\N(\LP)^{d\alpha_{\LP}}}{|U_\LP|}$.
    
    Case (a):
      $D$ splits at $v$ and $\LP$ does not divide $\LA$.
        In this case $\alpha_\LP = 1$ and $U_\LP = G(\gamma)(\calO/\LP)$.
        Since $G(\gamma)$ is isomorphic to $\Sp_n$ over $\calO_v$ (see Cor.~\ref{cor:SplitSp}),
        there is an isomorphism of finite groups
        $G(\gamma)(\calO/\LP) \cong \Sp_n(\calO/\LP)$.
        From 3.5 in \cite{Wilson2009} we deduce that 
       \begin{equation*}
            \bigl|G(\gamma)(\calO/\LP)\bigr| = \N(\LP)^d \prod_{j=1}^n\bigl(1-\frac{1}{\N(\LP)^{2j}}\bigr).
       \end{equation*}
 
     Case (b): $D$ is ramified at $v$ and $\LP$ does not divide $\LA$.
         In this situation we have $\alpha_\LP = 1$ and $U_\LP = G(\gamma)(\calO/\LP)$.
         Let $k= \calO/\LP$ be the finite residue class field and let $\ell / k$ be the unique quadratic extension.
         It is an easy exercise to show that $G(\gamma)(\calO/\LP)$ isomorphic to a semidirect product
         $U(\ell/k) \ltimes \Sym_n(\ell)$, where $U(\ell/k)$ denotes the unitary group of the quadratic extension $\ell/k$ and
         $\Sym_n(\ell)$ denotes the abelian group of symmetric $(n\times n)$-matrices with entries in $\ell$.
         Therefore (using 3.6 in \cite{Wilson2009}) we get
          \begin{equation*}
            \bigl|G(\gamma)(\calO/\LP)\bigr| = \N(\LP)^d \prod_{j=1}^n\bigl(1-\frac{(-1)^j}{\N(\LP)^{j}}\bigr).
          \end{equation*}

     Case (c): $\LP$ divides $\LA$.
         In this case $\alpha_v = \nu_\LP(\LA)$ and $|U_\LP|  = 1$.
         Consequently,
        \begin{equation*}
            \frac{\N(\LP)^{d\alpha_{\LP}}}{|U_\LP|} = N(\LP)^{d\nu_\LP(\LA)}.
        \end{equation*}

     The product of these terms is
     \begin{equation*}
         \prod_{\LP\in V_f} \frac{\N(\LP)^{d\alpha_{\LP}}}{|U_\LP|} = \N(\LA)^d \prod_{j=1}^n\Bigl(\zeta_F(2j) 
          \prod_{\LP | \LA}\bigl(1-\frac{1}{\N(\LP)^{2j}}\bigr)
      \prod_{\substack{\LP \in \Ram_f(D) \\ \LP \nmid \LA}} \bigl(1+(\frac{-1}{\N(\LP)})^{j}\bigr) \Bigr).
     \end{equation*}
    Here $\zeta_F$ denotes the zeta function of the number field $F$.

    Note that $d = n(2n+1) = \sum_{j=1}^n 4j-1$ and so $\disc{F}^{d/2} = \prod_{j=1}^n\disc{F}^{(4j-1)/2}$.
    The functional equation of the zeta function of the totally real number field $F$ (see VII.§6, Thm.~3 in \cite{WeilBNT}) yields 
    \begin{equation*}
        \zeta_F(2j)\disc{F}^{(4j-1)/2}\Bigl(\frac{2 \cdot (2j-1)!}{(2\pi)^{2j}}\Bigr)^{[F:\bbQ]} = (-1)^{j[F:\bbQ]}\zeta_F(1-2j)
    \end{equation*}
    for every integer $j \geq 1$.
    Using this we see that
    \begin{equation*}
        \disc{F}^{d/2}\Bigl(\prod_{j=1}^n \frac{2 \cdot (2j-1)!}{(2\pi)^{2j}}\Bigr)^{[F:\bbQ]} \prod_{j=1}^n\zeta_F(2j) = 
         (-1)^{[F:\bbQ]n(n+1)/2}\prod_{j=1}^n \zeta_F(1-2j). 
    \end{equation*}
    Substitute this into equation \eqref{eq:tmpFormula}, then a simple calculation proves the claim.  
\end{proof}

\subsection{Proof of the main Theorem} 
  The notation and assumptions are those of the introduction.
  As usual $F$ denotes an algebraic number field and $\calO$ denotes its ring of integers.
  Let $D$ be a quaternion algebra defined over $F$ and let $\Lambda_D \subseteq D$ be a maximal $\calO$-order.
  Let $n \geq 1$ be an integer, we consider the central simple $F$-algebra $A = M_n(D)$
  and the maximal $\calO$-order $\Lambda = M_n(\Lambda_D)$.
  Further $G:= \SL_\Lambda$ is the smooth $\calO$-group scheme defined as the kernel of the reduced norm over the order $\Lambda$ (cf.~\ref{def:SL}).

  We say that the quaternion algebra $D$ over $F$ is \emph{totally definite},
  if $F$ is totally real and $D$ ramifies at every real place of $F$.

 The algebraic group $G \times_\calO F$ has strong approximation since it is an \mbox{$F$-simple}, simply connected group
 and $G_\infty \cong \SL_{2n}(\bbR)^s \times \SL_n(\bbH)^r \times \SL_{2n}(\bbC)^t$ is not compact.
 Since the group $\SL_1(\bbH)$ is compact, we need the assumption that $n \geq 2$ if $D$ is totally definite. 

 Let $K_\infty \subseteq G_\infty$ be a $\tau^*$-stable maximal compact subgroup.
 Further, let $K_f$ be the open compact subgroup of $G(\bbA_f)$,
 which satisfies ${\Gamma(\LA) = K_f \cap G(F)}$ (see \ref{par:defCongruenceGroups}).
 Since $\Gamma(\LA)$ is torsion-free and $\tau^*$-stable, we can apply Theorem \ref{thmLefschetznumberRationalRepAdelic} and we obtain
 \begin{equation}\label{ch5:eq:FPFormula}
     \calL(\tau^*,\Gamma(\LA),W) \:=\: \sum_{\eta \in \calH^1(\tau^*)} \chi\bigl(\vartheta^{-1}(\eta)\bigr) \Tr(\tau^*|W(\gamma_\eta)).
 \end{equation}
  Here $\gamma_\eta$ is any representative of the $H^1(\tau^*,G(F))$ component of $\eta$
  and 
  \begin{equation*}
       \vartheta: \bigl(\bsl{K_\infty K_f}{G(\bbA)}/G(F) \bigr)^{\tau^*} \to \calH^1(\tau^*)
  \end{equation*}
  is the surjective continuous map defined in Section \ref{sec:RohlfsMethod}.

  By Theorem \ref{thm:H1Sequence} the projection $\pi: \calH^1(\tau^*) \to H^1(\tau^*,G(F))$ 
  is injective and there is an exact sequence of pointed sets
  \begin{equation}\label{eq:exactH1seq}
       1 \longrightarrow\: \calH^1(\tau^*)\: \stackrel{\pi}{\longrightarrow}\: H^1(\tau^*, G(F)) \:\stackrel{\pf_\tau}{\longrightarrow} \:\{\pm 1\} \:\longrightarrow 1.
   \end{equation}
   We deduce that, given a class $\eta\in\calH^1(\tau^*)$, every representative $\gamma_\eta \in \pi(\eta)$ has pfaffian one, and hence 
   they all describe the trivial class in $H^1(\tau^*, G(\alg{F}))$. Thus there is some $g \in G(\alg{F})$ such that $\gamma_\eta = g^{-1}\up{\tau^*}g$.
   It follows that ${\Tr(\tau^*|W(\gamma_\eta)) = \Tr(\tau^*|W)}$ 
   since $\tau^*|\gamma_\eta = \rho(g)^{-1}\circ\tau^*\circ\rho(g)$ on $W$.
   
   As a next step we describe the fixed point components. Let $\eta \in \calH^1(\tau^*)$, 
   via strong approximation we can choose representing cocycles $k_\eta$ in $Z^1(\tau^*,K_\infty K_f)$ and $\gamma_\eta$ in $Z^1(\tau^*, \Gamma(\LA))$,
   and an element $a_\infty \in G_\infty$ such that
   \begin{equation*}
    \eta = ([k_\eta],[\gamma_\eta])  \quad\text{  and  }\quad  \up{\tau^*}a_\infty = k_\eta^{-1} a_\infty \gamma_\eta.
     \end{equation*}
     We write $k_\eta = k_\infty k_0$ with $k_\infty \in K_\infty$ and $k_0 \in K_f$. Note that $k_0 = \gamma_\eta$
    considered as elements in $G(\bbA_f)$.
    By Lemma \ref{lem:StructureFPComponents} there is a homeomorphism
    \begin{equation*}
        \vartheta^{-1}(\eta) \isomorph \bsl{(a_\infty^{-1}K_\infty^{\tau^*|k_\infty}a_\infty) K_f(\gamma_\eta, \LA)}{G(\gamma_\eta)(\bbA)}/G(\gamma_\eta)(F).
    \end{equation*}
    In fact $(a_\infty^{-1}K_\infty^{\tau^*|k_\infty}a_\infty)$ is a maximal compact subgroup
    of $G(\gamma_\eta)_\infty$.
    
    Let $v\in \Ram_\infty(D)$ and let $(p_v,q_v)$ denote the local signature of $\gamma_\eta$ at $v$.
    By Theorem~\ref{thm:EulerCharFPGroups} the Euler characteristic of the fixed point component is zero 
    if $F$ has a complex place. If $F$ is totally real, which we assume from now on, then 
    \begin{equation*}
        \chi(\vartheta^{-1}(\eta)) = 2^{-nr} \N(\LA)^{n(2n+1)}\Drd{D}^{n(n+1)/2}\prod_{v\in\Ram_\infty(D)} \binom{n}{p_v} 
           \prod_{j=1}^n M(j,\LA,D).
    \end{equation*}
    
    The short exact sequence \eqref{eq:exactH1seq}, in combination with the Hasse principle (Prop.~\ref{prop:HassePrinciple})
    and Lemma \ref{lem:H1globalSL}, shows that the map which takes all the local signatures
    at the real ramified places induces a bijection
   \begin{equation*}
       \calH^1(\tau^*) \isomorph \prod_{v \in \Ram_\infty(D)} \{\:(p_v,q_v)\:|\: p_v + q_v = n \:\text{ and $q_v$ even}\:\}.
   \end{equation*}
   The following identity can be easily verified
   \begin{equation*}
       \sum_{\eta \in \calH^1(\tau^*)} \prod_{v \in \Ram_\infty(D)} \binom{n}{q_v} \:=\: \sum_{u_1, \dots, u_r=0}^{[\frac{n}{2}]} \prod_{i=1}^r \binom{n}{2u_i} = 2^{r(n-1)}.
   \end{equation*}
    As a final step we substitute all results in formula \eqref{ch5:eq:FPFormula} and we observe:
   \begin{equation*}
       \calL(\tau^*,\Gamma(\LA),W) \: = \:
         2^{-r} \N(\LA)^{n(2n+1)} \Drd{D}^{n(n+1)/2} \Tr(\tau^*|W) \prod_{j=1}^n M(j,\LA, D).
   \end{equation*}
   Note that the Lefschetz number is non-zero precisely when $F$ is totally real and $\Tr(\tau^*|W)$ does not vanish.

\subsection{The growth of the total Betti number}\label{sec:growthOfBettinumber}
There are many recent results on the asymptotic behaviour of Betti numbers of arithmetic groups.
Most of these results are upper bound results -- a strong asymptotic upper bound was obtained by Calegari-Emerton \cite{CalegariEmerton2009}.
However, there are no strong lower bound results. It seems that the only available lower bound results are non-vanishing results for certain degrees in the cohomology.
Indeed, there is a geometric method to construct cohomology classes in a given degree for cocompact arithmetic groups.
This method originated from the work of Millson and Raghunathan \cite{MillsonRaghunathan1981}
and has been further elaborated by Rohlfs and Schwermer \cite{RohlfsSchwermer1993}.
Another result that can be interpreted as a result on lower bounds has been obtained by Venkataramana \cite{Venky2008}.
In this last section we prove Corollary \ref{cor:GrowthBettiNumber} to show that Lefschetz numbers provide asymptotic lower bounds
for the total Betti number.
The only remaining step is to relate the Lefschetz number to the index of the congruence subgroup $\Gamma(\LA)$.
Let $F$ be a totally real number field. If $D$ is totally definite we assume $n \geq 2$ such that $G = \SL_\Lambda$ has strong approximation.

\begin{lemma}\label{lem:IndexGamma}
   The index $[G(\calO):\Gamma(\LA)]$ of $\Gamma(\LA)$ in $G(\calO)$ is 
   \begin{equation*}
    \N(\LA)^{4n^2-1} 
    \prod_{\substack{\LP | \LA \\ \LP \notin \Ram_f(D)}}\Bigl( \prod_{j=2}^{2n} (1-\frac{1}{\N(\LP)^{j}}) \Bigr) 
    \prod_{\substack{\LP | \LA \\ \LP \in \Ram_f(D)}}\Bigl( (1+\frac{1}{\N(\LP)}) \prod_{j=2}^n (1-\frac{1}{\N(\LP)^{2j}}) \Bigr).
   \end{equation*}
     In particular, the term $[G(\calO):\Gamma(\LA)]N(\LA)^{-4n^2+1}$ is bounded from above and from below independent of $\LA$,
   \begin{equation*}
             \prod_{j=2}^{2n} \zeta_F(j)^{-1} \leq [G(\calO):\Gamma(\LA)]N(\LA)^{-4n^2+1}  \leq  \prod_{\LP \in \Ram_f(D)}(1+\frac{1}{\N(\LP)}).
   \end{equation*}                                               
\end{lemma}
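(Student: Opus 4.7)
The plan is to convert the index into a product of local orders via smoothness and then compute each local factor case by case. Since $G = \SL_\Lambda$ is a smooth $\calO$-group scheme (Cor.~\ref{cor:smoothSL} combined with Lem.~\ref{lem:orderIsNice}), the reduction map $G(\calO_v) \to G(\calO_v/\LP_v^{e})$ is surjective for every finite place $v$ and every exponent $e \geq 1$. Combined with strong approximation for the simply connected group $G\times_\calO F$ (valid under the running assumption that $G_\infty$ is noncompact), this shows that the global reduction $G(\calO) \to G(\calO/\LA)$ is surjective, and hence $[G(\calO):\Gamma(\LA)] = |G(\calO/\LA)|$. The Chinese Remainder Theorem factors this as $\prod_{\LP \mid \LA} |G(\calO_\LP/\LP^{e_\LP}\calO_\LP)|$ with $e_\LP := \nu_\LP(\LA)$. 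Using smoothness once more via the filtration by $1 + \LP^i \Lambda_\LP$, whose successive quotients are copies of the $(4n^2-1)$-dimensional $\mathbb{F}_{\N(\LP)}$-vector space $\Lie(G)(\calO/\LP)$, I obtain $|G(\calO_\LP/\LP^{e_\LP}\calO_\LP)| = \N(\LP)^{(e_\LP-1)(4n^2-1)} |G(\calO/\LP)|$, which after multiplication over $\LP \mid \LA$ produces the overall factor $\N(\LA)^{4n^2-1}$.

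The heart of the proof is therefore the computation of $|G(\calO/\LP)|$ in the two local cases. If $\LP \notin \Ram_f(D)$, then $D_\LP$ splits and $\Lambda\otimes\calO_\LP$ is conjugate to $M_{2n}(\calO_\LP)$, so $G(\calO/\LP)\cong \SL_{2n}(\mathbb{F}_q)$ with $q=\N(\LP)$, yielding $q^{4n^2-1}\prod_{j=2}^{2n}(1-q^{-j})$. If $\LP \in \Ram_f(D)$, then $\Lambda\otimes \calO_\LP$ is conjugate to $M_n(\Delta)$ where $\Delta$ is the unique maximal order in the quaternion division algebra $D_\LP$. The Jacobson radical of $M_n(\Delta)/\pi M_n(\Delta)$ is $M_n(\pi_\Delta \Delta/\pi_\Delta^2\Delta) \cong M_n(\mathbb{F}_{q^2})$ (a group of order $q^{2n^2}$ under addition), while the quotient modulo this radical is $\GL_n(\mathbb{F}_{q^2})$; combined with surjectivity of $\nrd\colon \Delta^\times \to \calO_\LP^\times$ (which holds because $\nrd(\pi_\Delta)$ is a uniformizer and the residue extension $\mathbb{F}_{q^2}/\mathbb{F}_q$ has surjective norm), this gives $|G(\calO/\LP)| = q^{4n^2-1}(1+q^{-1})\prod_{j=2}^n (1-q^{-2j})$. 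Assembling these local factors yields the stated formula for the index.

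For the bounds, the upper bound is immediate since each factor $(1-\N(\LP)^{-j})$ lies in $(0,1)$, leaving at most $\prod_{\LP\in\Ram_f(D)}(1+\N(\LP)^{-1})$. For the lower bound I would verify the pointwise inequality
\[
(1+q^{-1})\prod_{j=2}^n (1 - q^{-2j}) \;\geq\; \prod_{j=2}^{2n}(1 - q^{-j}),
\]
which after using $(1+q^{-1})(1-q^{-1}) = 1-q^{-2}$ reduces to observing that the remaining odd-exponent factors on the right are all less than $1$. Consequently every local factor is bounded below by $\prod_{j=2}^{2n}(1-\N(\LP)^{-j})$, and extending the product over $\LP \mid \LA$ to all primes and invoking the Euler product $\prod_\LP (1-\N(\LP)^{-j})^{-1} = \zeta_F(j)$ yields the lower bound $\prod_{j=2}^{2n}\zeta_F(j)^{-1}$.

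The main obstacle is the counting of $|G(\calO/\LP)|$ at a ramified prime, since $\Lambda_\LP/\LP\Lambda_\LP$ is not a matrix ring over a field and one must track both the Jacobson radical filtration and the surjectivity of the reduced norm on the unit group to arrive at the compact product formula with $(1+\N(\LP)^{-1})$.
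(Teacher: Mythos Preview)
Your proof is correct and follows the same strategy as the paper: reduce the index to $\prod_{\LP\mid\LA}|G(\calO_\LP/\LP^{e_\LP})|$ via smoothness and strong approximation, peel off the $\N(\LP)^{(e-1)d}$ factors by smoothness, and compute $|G(\calO/\LP)|$ separately in the split and ramified cases. You in fact supply more detail than the paper does---the Jacobson-radical filtration for the ramified count and the explicit verification of the pointwise inequality for the lower bound---at the two places where the paper simply writes ``one can show that'' and ``can be readily verified.''
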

 \begin{proof}
    Using the smoothness of the group scheme combined with strong approximation, there is a short exact sequence of groups
    \begin{equation*}
        1 \longrightarrow \Gamma(\LA) \longrightarrow G(\calO) \longrightarrow G(\calO/\LA) \longrightarrow 1, 
    \end{equation*}
    from which we deduce $[G(\calO):\Gamma(\LA)] = \prod_{\LP | \LA} |G(\calO_\LP / \LA \calO_\LP)|$.
     Let $\LP$ be a prime ideal which divides $\LA$, say $\nu_\LP(\LA) = e \geq 1$.
     Then ${\calO_\LP/\LA\calO_\LP \cong \calO_\LP/\LP^e\calO_\LP}$
     and it follows from the smoothness of $G$ that
     \begin{equation*}
          |G(\calO_\LP/\LP^e\calO_\LP)| = \N(\LP)^{(e-1)d} |G(\calO_\LP/\LP\calO_\LP)|
     \end{equation*}
     where $d$ is the dimension of the group $G\times_\calO F$ 
     (use \cite[2.1]{Oesterle1984}).
     The dimension of $G$ is $d = 4n^2 -1$.

    If $\LP \in \Ram_f(D)$, then one can show that
     \begin{equation*}
             |G(\calO_\LP/\LP\calO_\LP)| = \N(\LP)^{4n^2-1}(1+\N(\LP)^{-1}) \prod_{j=2}^n(1-\N(\LP)^{-2j}).
     \end{equation*}
   
    If otherwise $\LP \notin \Ram_f(D)$, then $G \times_\calO \calO_\LP$ is isomorphic
    to the special linear group~$\SL_{2n}$.
    We deduce that
  \begin{equation*}
          |G(\calO_\LP/\LP\calO_\LP)| = \N(\LP)^{4n^2-1} \prod_{j=2}^{2n}(1-\N(\LP)^{-j})
  \end{equation*}
     due to 3.3.1 in \cite{Wilson2009}.
     Now the assertions can be readily verified.
 \end{proof}

 \begin{proof}[Proof of Corollary \ref{cor:GrowthBettiNumber}]
   Since $\Gamma_0(\LA)$ is a subgroup
   of finite index in $\Gamma(\LA)$, we obtain (from VII, Prop.~6 in \cite{Serre1979}) that $b_i(\Gamma(\LA)) \leq b_i(\Gamma_0(\LA))$.
    It follows directly from the main theorem, that
    there is a positive real number $b > 0$, depending on $F$, $D$ and $n$, such that 
   \begin{equation*}
           b \N(\LA)^{n(2n+1)} \leq |\calL(\tau^*,\Gamma(\LA),\bbC)|.
       \end{equation*}
   for every ideal $\LA \subseteq \calO$ such that $\Gamma(\LA)$ is torsion-free.
   Since $B(\Gamma(\LA)) \geq |\calL(\tau^*,\Gamma(\LA),\bbC)|$, it follows from Lemma \ref{lem:IndexGamma} that
   \begin{equation*}
        B(\Gamma(\LA)) \geq a [G(\calO):\Gamma(\LA)]^{\frac{n(2n+1)}{4n^2-1}}
   \end{equation*}
   for some $a>0$ depending on $F$, $D$ and $n$.
   We obtain
   \begin{align*}
        B(\Gamma_0(\LA)) 
         &\geq a [G(\calO):\Gamma(\LA)]^{\frac{n(2n+1)}{4n^2-1}} \geq a [G(\calO)\cap\Gamma_0:\Gamma_0(\LA)]^{\frac{n(2n+1)}{4n^2-1}}\\
         &= a  \bigl([\Gamma_0:G(\calO)\cap\Gamma_0]^{-1}  [\Gamma_0:\Gamma_0(\LA)]\bigr)^{\frac{n(2n+1)}{4n^2-1}}.
   \end{align*}
   We define $\kappa = a  [\Gamma_0:G(\calO)\cap\Gamma_0]^{-\frac{n(2n+1)}{4n^2-1}}$.
\end{proof}

\subsection*{Acknowledgements}
I would like to thank Professor J. Schwermer for his support during my thesis work, upon which this article is based.

\newpage
\providecommand{\bysame}{\leavevmode\hbox to3em{\hrulefill}\thinspace}
\providecommand{\MR}{\relax\ifhmode\unskip\space\fi MR }
% \MRhref is called by the amsart/book/proc definition of \MR.
\providecommand{\MRhref}[2]{%
  \href{http://www.ams.org/mathscinet-getitem?mr=#1}{#2}
}
\providecommand{\href}[2]{#2}

\end{document}